\numberwithin{equation}{section}
\newtheorem*{rep@theorem}{\rep@title}
\newcommand{\newreptheorem}[2]{%
\newenvironment{rep#1}[1]{%
 \def\rep@title{#2 \ref{##1}}%
 \begin{rep@theorem}}%
 {\end{rep@theorem}}}
\let\amsamp=&
\gdef\pampmatrix{%
  \begingroup
  \let&=\amsamp
  \begin{pmatrix}%
}
\gdef\endpampmatrix{\end{pmatrix}\endgroup}
\newcommand{\kdef }{\ensuremath{\operatorname{k^{\textnormal{def}}}}}
\newcommand{\com }{\ensuremath{\operatorname{com}}}
\newcommand{\myvee }{\ensuremath{\bigvee}}
\newcommand{\op}{\textnormal{op}}
\newcommand{\Hom}{\textnormal{Hom}}
\newcommand{\tel}{\textnormal{tel}}
\DeclareMathOperator*\circprod{\bigcirc}
\theoremstyle{plain}
\newtheorem{theorem}{Theorem}[section]
\newtheorem{proposition}[theorem]{Proposition}
\newtheorem{lemma}[theorem]{Lemma}
\newtheorem{corollary}[theorem]{Corollary}
\theoremstyle{definition}
\newtheorem{definition}[theorem]{Definition}
\newtheorem{example}[theorem]{Example}
\theoremstyle{remark}
\newtheorem{remark}[theorem]{Remark}
\newcommand{\dmo}{\DeclareMathOperator}
\dmo{\colim}{colim}
\title{The spectrum for commutative complex $K$-theory}
\author{Simon Philipp Gritschacher}
\begin{document}

\maketitle

\begin{abstract}
We study commutative complex $K$-theory, a generalised cohomology theory built from spaces of ordered commuting tuples in the unitary groups. We show that the spectrum for commutative complex $K$-theory is stably equivalent to the $ku$-group ring of $BU(1)$ and thus obtain a splitting of its representing space $B_{\com}U$ as a product of all the terms in the Whitehead tower for $BU$, $B_{\com}U\simeq BU\times BU\langle 4\rangle \times BU\langle 6\rangle \times \dots .$ As a consequence of the spectrum level identification we obtain the ring of coefficients for this theory. Using the rational Hopf ring for $B_{\com}U$ we describe the relationship of our results with a previous computation of the rational cohomology algebra of $B_{\com}U$. This gives an essentially complete description of the space $B_{\com}U$ introduced by A. Adem and J. G\'omez.
\end{abstract}

\tableofcontents

\section{Introduction and results} \label{sec:introduction}

In this article we describe the unitary variant of commutative $K$-theory, which was introduced by Adem and G\'omez \cite{ademcommutativity}. The definition is based on a variation of the classical infinite loop space $BU$ which represents complex topological $K$-theory. Recall that a simplicial model for the classifying space $BG$ of any topological group $G$ is the geometric realisation of the nerve $N_\ast G$, where $G$ is regarded as a category with one object. Adem, Cohen and Torres-Giese \cite{ademcommutingelements} consider a natural subcomplex $B_{\com}G\subset BG$ encoding commutativity in $G$. It is obtained from $BG$ by restricting to those simplices which are ordered tuples of pairwise \emph{commuting} elements in $G$. More precisely, let
\[
C_k(G):=\{(g_1,\dots,g_k)\in G^k\,|\, g_ig_j=g_jg_i \textnormal{ for all }1\leq i,j\leq k\}\, ,
\]
considered as a subspace of the product $G^k$. Then it is easy to see that $C_\ast(G)\subset N_\ast G$ is a sub-simplicial space. It is common, in fact, to identify $C_k(G)$ with the set of group homomorphisms $\Hom(\mathbb{Z}^k,G)$, where $\mathbb{Z}^k$ is the free abelian group of rank $k$. Evaluating homomorphisms on the standard basis for $\mathbb{Z}^k$ gives a natural identification $\Hom(\mathbb{Z}^k,G)\cong C_k(G)$ and this induces a topology on $\Hom(\mathbb{Z}^k,G)$.

\begin{definition} \label{def:bcomg}
The \emph{classifying space for commutativity in $G$} is the geometric realisation $B_{\com}G:=|k\mapsto \Hom(\mathbb{Z}^k,G)|$. This construction is natural for homomorphisms of groups and there is a natural inclusion map $i: B_{\com}G\rightarrow BG$.
\end{definition}

This paper is concerned with the case $G=U$, where $U=\colim_{n}U(n)$ is the direct limit of the Lie groups $U(n)$ of unitary $n\times n$ matrices. Recall that $BU$ is an $E_{\infty}$-ring space without unit, whose addition and multiplication are induced from the direct sum and the tensor product in the unitary groups, respectively. Adem, G\'omez, Lind and Tillmann showed in \cite[Thm. 4.1]{ademnilpotentktheory} that the same operations induce an $E_{\infty}$-ring space structure on $B_{\com}U$ (which is called $B(2,U)$ in the following theorem).

\begin{theorem}[\cite{ademnilpotentktheory}]
The spaces $B(q,U)$, $B(q,SU)$, $B(q,SO)$, $B(q,O)$ and $B(q,Sp)$ provide a filtration by non-unital $E_{\infty}$-ring spaces of the classical non-unital $E_{\infty}$-ring spaces $BU$, $BSU$, $BSO$, $BO$ and $BSp$, respectively.
\end{theorem}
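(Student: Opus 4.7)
The plan is to show that the defining commutativity condition (more generally, the $q$-nilpotency condition) is stable under the two algebraic operations that produce the classical $E_\infty$-ring space structures, so the restriction to commuting tuples inherits a compatible structure.

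First, I would recall the operadic source of the structure on $BU$: the permutative category $\coprod_n U(n)$ (under block direct sum) carries a second monoidal operation given by Kronecker tensor product $U(n)\times U(m)\to U(nm)$, and these fit together into a bipermutative (symmetric bimonoidal) category, which an infinite loop space machine such as May's turns into a non-unital $E_\infty$-ring space whose group completion is $BU$. Analogous bipermutative structures (with the caveat that for $G=Sp$ the tensor pairing factors through the orthogonal group, so a bit of bookkeeping is needed) produce the $E_\infty$-ring structures on $BSU$, $BSO$, $BO$, $BSp$. Each of these matrix-level operations is, by construction, the restriction of the block direct sum and the tensor product of matrices.

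The core observation is purely algebraic: if $(A_1,\dots,A_k)$ is a commuting tuple in $U(n)$ and $(B_1,\dots,B_k)$ is a commuting tuple in $U(m)$, then
\[
(A_i\oplus B_i)(A_j\oplus B_j)=(A_iA_j)\oplus(B_iB_j)=(A_jA_i)\oplus(B_jB_i)=(A_j\oplus B_j)(A_i\oplus B_i),
\]
and similarly for $\otimes$ in place of $\oplus$; thus componentwise $\oplus$ and $\otimes$ send $C_k(U(n))\times C_k(U(m))$ into $C_k(U(n+m))$ and $C_k(U(nm))$ respectively. The same identity shows that $q$-tuples generating a subgroup of nilpotency class less than $q$ are preserved under both operations, since the relevant iterated commutators are computed componentwise and vanish whenever they vanish in each factor. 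Consequently the whole bipermutative category structure restricts to the subcategories whose morphism spaces are $\Hom(\mathbb{Z}^k/\Gamma^q,U(n))$, and this restriction is again bipermutative (the coherence diagrams are satisfied because they are satisfied in the ambient category and the forgetful map to $\coprod_n U(n)$ is an inclusion of spaces).

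Finally I would pass to the simplicial direction: the levelwise inclusions $\Hom(\mathbb{Z}^k/\Gamma^q,U(n))\hookrightarrow U(n)^k$ are compatible with simplicial face and degeneracy maps, so geometric realisation and the colimit over $n$ produce the inclusions $B(q,U)\hookrightarrow BU$ (and analogously for the other groups). Feeding the restricted bipermutative categories into the same infinite loop space machine used to build the $E_\infty$-ring space structure on $BU$ etc., one obtains non-unital $E_\infty$-ring space structures on each $B(q,G)$, and the inclusions are maps of non-unital $E_\infty$-ring spaces by naturality of the machine. The main obstacle I anticipate is of a technical rather than conceptual nature: checking that the chosen machine accepts the commuting-tuples variant (in particular on the point-set level: continuity of the operations on $\Hom(\mathbb{Z}^k/\Gamma^q,U(n))$ with the subspace topology), and handling the $Sp$ case where multiplicativity lives externally. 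Once these are taken care of, the filtration statement is immediate from the inclusions $\Gamma^q\supset\Gamma^{q+1}$ of the lower central series, which produce compatible inclusions $B(q,G)\hookrightarrow B(q+1,G)$ respecting both operations.
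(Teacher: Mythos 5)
First, a caveat: the paper you are reading does not prove this statement — it is quoted from Adem--G\'omez--Lind--Tillmann \cite{ademnilpotentktheory}, so the only meaningful comparison is with the proof given there. Your central algebraic observation is exactly the right one and is indeed the crux of that proof: block sum and Kronecker product, applied componentwise, send a pair of commuting (more generally $q$-nilpotent) tuples to a commuting ($q$-nilpotent) tuple, since $\langle A_i\oplus B_i\rangle$ embeds in $\langle A_i\rangle\times\langle B_i\rangle$ and $\langle A_i\otimes B_i\rangle$ is a quotient of a subgroup of that product. Your remarks on the $Sp$ case and on the compatibility with the lower central series are also on target.

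The gap is in the packaging. The space $B(q,G)$ is \emph{not} the classifying space of a subcategory of the bipermutative category $\coprod_n G(n)$: the subspace $\Hom(F_k/\Gamma^q,G)\subset N_kG=G^k$ is closed under the simplicial operators, but it is not determined by its $1$-simplices (these are all of $G$, while the $2$-simplices form a proper subspace of $G^2$), so there is no ``restricted bipermutative category'' to feed into the machine. Worse, the categorical/group-completion route, applied to the action groupoids $\coprod_n U(n)\ltimes\Hom(F_k/\Gamma^q,U(n))$, naturally outputs $\mathbb{Z}\times B(q,U)\sslash U$ rather than $B(q,U)$ itself — this is exactly the distinction drawn in Remark \ref{rem:representingspace} and exploited throughout Section \ref{sec:spectrum}. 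The argument in \cite{ademnilpotentktheory} instead lets an $E_\infty$-operad pair built from linear isometries of $\mathbb{C}^\infty$ act directly and \emph{levelwise} on the simplicial space $k\mapsto\Hom(F_k/\Gamma^q,U)$ (the isometries are what replace your strict bipermutative coherence when stabilizing over $n$), checks that the actions commute with faces and degeneracies — the face maps multiply adjacent entries, so this is your componentwise identity again — and then invokes the fact that geometric realization of a simplicial $E_\infty$-ring space is an $E_\infty$-ring space. With that substitution (operad actions on the simplicial space in place of a restricted bipermutative category followed by group completion), your argument goes through, and the naturality you invoke for the inclusions $B(q,G)\hookrightarrow B(q+1,G)\hookrightarrow BG$ is then correct as stated.
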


Thus they make the following definition\footnote{See Remark \ref{rem:representingspace}.} (cf. \cite{ademnilpotentktheory, ademcommutativity}).

\begin{definition} \label{def:ckth}
\emph{Commutative complex $K$-theory} $\tilde{K}_{\com}^\ast$ is the cohomology theory represented by the $E_{\infty}$-ring space $B_{\com}U$.
\end{definition}

An intriguing aspect of the construction $B_{\com}G$ is the fact that it can be used to parametrize an additional structure on principal $G$-bundles. In \cite{ademcommutativity} Adem and G\'omez introduce a notion of \emph{transitional commutativity}. Let $X$ be a CW complex. We say that a principal $G$-bundle $q: P\rightarrow X$ is transitionally commutative if there exists an open trivialising cover $\{U_i\,|\, i\in I\}$ of $X$ and a representing cocycle $\{g_{ij}: U_i\cap U_j\rightarrow G\,|\, i,j\in I\}$ for $q: P\rightarrow X$ so that for each $x\in X$ the set $\{g_{ij}(x)\,|\, i,j\in I: g_{ij}\textnormal{ is defined at }x\}\subset G$ is a subset of commuting elements. The authors then show that if $G$ is a Lie group and $f: X\rightarrow BG$ is the classifying map of a principal $G$-bundle $P\rightarrow X$, then $P$ is transitionally commutative if and only if $f$ can be factored, up to homotopy, through $i: B_{\com}G\rightarrow BG$. In particular, this gives $\tilde{K}^0_{\com}(X)=[X,B_{\com}U]$ an interpretation as a group of `transitionally commutative structures' on stable isomorphism classes of complex vector bundles over $X$ (cf. \cite[Thm. 5.5]{ademnilpotentktheory}).

The first objective of this paper is to determine the homotopy type of $B_{\com}U$. We will use the deformation $K$-theory (cf. \cite{carlssonstructuredstablehomotopy, lawsonproductformula}) of free abelian groups to define a spectrum for commutative $K$-theory. We then apply Lawson's Bott cofibre sequence in deformation $K$-theory \cite{lawsonbottcofibre} to determine the homotopy type of this spectrum. Thus our first result is:

\begin{reptheorem}{thm:spectrumbcomu}
There is a commutative $ku$-algebra spectrum $E$ which satisfies
\[
\Omega^\infty E\simeq \mathbb{Z}\times B_{\com}U\sslash U
\]
and an equivalence of commutative $ku$-algebras $E\simeq ku\wedge BU(1)_+$.
\end{reptheorem}

Here $ku$ denotes the connective $K$-theory spectrum. The space $B_{\com}U$ has an action of $U$ simplicialwise by conjugation, and $B_{\com}U\sslash U$ is the resulting homotopy orbit space. A line bundle has a natural `transitionally commutative structure' as $BU(1)=B_{\com}U(1)$. The equivalence in the theorem is induced by embedding $BU(1)$ in the natural way in the space $B_{\com}U$ and then into $\{1\}\times B_{\com}U\sslash U$, corresponding to the inclusion $\{\textnormal{line bundles}\}\subset \{\textnormal{transitionally commutative bundles}\}$. Thus one can say that commutative $K$-theory is the free $ku$-algebra theory `generated by line bundles'.

The theorem shows that $\pi_\ast(E)$ is the connective $K$-Pontrjagin ring of $BU(1)$. The structure of this ring is well known. Let $x\in ku^{2}(BU(1))$ be the standard choice of complex orientation for $ku$-theory, and let $y_n \in ku_{2n}(BU(1))$ be dual to $x^n\in ku^{2n}(BU(1))$. Then $ku_\ast(BU(1))$ is a free $\pi_\ast(ku)$-module on generators $1$ and $y_n$ for $n\geq 1$. In the algebra $ku_\ast(BU(1))$ the $y_n$ satisfy certain relations (cf. Section \ref{sec:homotopy}).

\begin{repcorollary}{cor:homotopygroups}
The homotopy ring $\pi_\ast(B_{\com}U)$ is the ideal $(y_n\,|\, n\geq 1)\subset ku_\ast(BU(1))$. In particular, the homotopy groups of the space $B_{\com}U$ are as follows,
\begin{alignat*}{2}
\pi_{2n}(B_{\com}U)		&= \mathbb{Z}^n, \\
\pi_{2n+1}(B_{\com}U)	&= 0
\end{alignat*}
for all $n\geq 0$.
\end{repcorollary}

We also describe the homotopy ring of $B_{\com}SU$ (Corollary \ref{cor:homotopybcomsu}).

The inclusion map $i: B_{\com}U\rightarrow BU$ induces a transformation of multiplicative cohomology theories from commutative to ordinary $K$-theory. In \cite[Thm. 4.2]{ademnilpotentktheory} it is shown that this map has a section by an infinite loop map, so that $BU$ is a direct factor of $B_{\com}U$. For any $G$ let $E_{\com}G$ denote the homotopy fibre of the map $i: B_{\com}G\rightarrow BG$.

\begin{theorem}[{\cite{ademnilpotentktheory}, case $q=2$}]
For $G=U$, $SU$, $SO$, $O$ and $Sp$ there is a homotopy split fibration of infinite loop spaces $E_{\com}G\rightarrow B_{\com}G\rightarrow BG$. In particular, there is a splitting of spaces $B_{\com}G\simeq BG\times E_{\com}G$.
\end{theorem}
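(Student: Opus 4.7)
The plan is to split the proof into two parts: constructing an infinite loop section $s\colon BG\to B_{\com}G$ of $i$, and then deriving the product decomposition from it. The second part is largely formal. Set $E_{\com}G := \textnormal{hofib}(i)$, which is an infinite loop space because $i$ is an infinite loop map by the preceding theorem. Given $s$, I would consider the infinite loop map
\[
\Phi\colon E_{\com}G \times BG \xrightarrow{j \times s} B_{\com}G \times B_{\com}G \xrightarrow{\mu} B_{\com}G,
\]
built from the fibre inclusion $j$ and the additive $E_\infty$-structure $\mu$ on $B_{\com}G$. Using $i\circ s \simeq \textnormal{id}$ and $i\circ j\simeq \ast$, a comparison of long exact sequences of homotopy groups shows $\Phi$ is a weak equivalence.

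For the section $s$ itself, I would work at the spectrum level. Group completion applied to the $E_\infty$-ring map $i$ yields a map of connective commutative ring spectra $i_\ast\colon E_{\com}^G\to K^G$, where $K^G$ denotes the $K$-theory spectrum (so $K^U=ku$). The strategy is to construct a ring-spectrum section $s_\ast$ using the tautological line bundle. For $G=U$, the equality $BU(1)=B_{\com}U(1)$ supplies a canonical line bundle class in $B_{\com}U$ whose image under $i$ is the standard line bundle class in $BU$; by Snaith's theorem $ku$ is generated as a ring spectrum by $\Sigma^\infty_+ BU(1)$ after inverting the Bott element, and together with the identification $E\simeq ku\wedge BU(1)_+$ from Theorem \ref{thm:spectrumbcomu}, this suggests assembling $s_\ast\colon ku\to E_{\com}^U$ from line bundle data. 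The other cases $G=SU, SO, O, Sp$ should be handled by analogous arguments using the respective natural bundles and their maximal tori.

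The hard part will be ensuring the section is compatible with the full $E_\infty$-operad action, rather than only with the underlying multiplication. A section of $i$ on the level of spaces can be built skeleton-by-skeleton using the surjectivity $\pi_\ast(B_{\com}U)\twoheadrightarrow\pi_\ast(BU)$ together with a natural splitting visible from Corollary \ref{cor:homotopygroups}; promoting this to an infinite loop map requires coherent compatibility at every level of the operad, and this is the technical core of the argument carried out in \cite{ademnilpotentktheory}.
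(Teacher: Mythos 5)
Your formal deduction of the product decomposition from a section is fine, but the construction of the section itself --- the actual content of the theorem --- has two serious problems. First, the line-bundle/Snaith strategy cannot work, and the paper itself explains why: in the discussion of operations in Section \ref{sec:type} it is shown that the section $s\colon BU\to B_{\com}U$ cannot be a map of multiplicative $H$-spaces and an infinite loop map simultaneously, and that $s\circ l\not\simeq j$ for the canonical maps $l\colon BU(1)\to BU$ and $j\colon BU(1)\to B_{\com}U$. So any construction that forces $s$ to be a ring map carrying the tautological line-bundle class to the canonical line-bundle class in $B_{\com}U$ is producing a map that provably is not the section. There is also a type confusion: a ring-spectrum map $ku\to E$ is just the $ku$-algebra unit $\eta$, which corresponds to the basepoint component of $\mathbb{Z}\times B_{\com}U\sslash U$, not to a section of $i$; what is needed is a $ku$-\emph{module} map $\Sigma^2 ku\simeq bu\to b_{\com}u$ splitting $\iota$ (equivalently $F\to E$ splitting $E\to F$), and Snaith's theorem, which only identifies the Bott-inverted periodic theory, gives no handle on this connective module map. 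Second, the fallback of building $s$ ``skeleton-by-skeleton using surjectivity of $\pi_\ast(B_{\com}U)\to\pi_\ast(BU)$'' is not a valid argument: the obstructions to sectioning $i$ live in $H^{\ast+1}(BU;\pi_\ast(E_{\com}U))$, which are large groups, and surjectivity on homotopy does not make them vanish.

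The mechanism that actually produces the section is different and is recorded in Remark \ref{rem:splittingmap}: the simplicial $1$-skeleta of $B_{\com}U$ and $BU$ \emph{coincide} (both are built from $\Hom(\mathbb{Z},U)=\Hom(F_1,U)=U$), and the inclusion of the $1$-skeleton already splits off all of $BU$ stably --- in this paper this is seen from Lawson's computation $\kdef(F_k)\simeq ku\vee\bigvee^{k}\Sigma ku$, which shows $F_1|\kdef(F_\ast)|\to F$ is an equivalence of $ku$-modules; since $F_1|\kdef(F_\ast)|=F_1|\kdef(\mathbb{Z}^\ast)|$ the inverse equivalence factors through $E$. This is why the construction ``makes explicit use of the loop space structure'' rather than of line bundles, and it is the step your proposal is missing.
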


Using the fact that $ku\wedge BU(1)_+$ splits as a wedge of suspensions of $ku$, we obtain the following strengthening of their theorem for $G=U$ and $SU$: Let $BU\langle 2n\rangle\rightarrow BU$ denote the $2n-1$ connected cover of $BU$.

\begin{reptheorem}{thm:splitting}
There are splittings of $E_{\infty}-\mathbb{Z}\times BU$-modules $B_{\com}U \simeq BU \times E_{\com}U$, and $B_{\com}SU \simeq BSU \times E_{\com}U$, and
\[
E_{\com}U \simeq \prod_{n\geq 2} BU\langle 2n\rangle\, .
\]
\end{reptheorem}

Prior to this work, Adem and G\'omez had computed the rational cohomology algebra of $B_{\com}U$ by regarding it as the direct limit of all $B_{\com}U(n)$ for $n\geq 1$. It is a polynomial algebra $\mathbb{Q}[z_{a,b}\,|\,(a,b)\in \mathbb{N}^2,\, b\geq 1]$, where the $z_{a,b}$ are generators of degree $2(a+b)$. Their computation relies on Lie group theory and shows that the classes $z_{a,b}$ are related to \emph{multisymmetric functions} in the same way as the components of the Chern character are related to elementary symmetric polynomials. It seems interesting now to compare the description of the homotopy groups of $B_{\com}U$ via $K$-homology theory to the description of the cohomology groups via multisymmetric functions. To do this we describe the rational \emph{Hopf ring} of $B_{\com}U$ in a basis determined by the $z_{a,b}$.

Let $\zeta_{a,b}$ be dual to $z_{a,b}$ and let $[n]\in H_{0}(\mathbb{Z},\mathbb{Q})$ be the homology class determined by $n\in \mathbb{Z}$. We write $\circ$ for the Hopf ring multiplication.
\begin{reptheorem}{thm:hopfring}
The Hopf ring $H_{\ast}(\mathbb{Z}\times B_{\com}U\sslash U,\mathbb{Q})$ is generated by $[1]\otimes 1$, and the classes $[0]\otimes \zeta_{1,0}$ and $[0]\otimes \zeta_{0,1}$ in degree two. In this ring the class $\zeta_{a,b}:=[0]\otimes \zeta_{a,b}$ has the presentation
\[
\zeta_{a,b}=\frac{\zeta_{1,0}^{\circ a}\circ \zeta_{0,1}^{\circ b}}{a!b!}\,,\quad (a,b)\in \mathbb{N}^2-\{(0,0)\}\, .
\]
\end{reptheorem}

Using this theorem we obtain some interesting formulas. For example:
\begin{repcorollary}{cor:hurewicz}
The Hurewicz homomorphism $h: \pi_\ast(B_{\com}U)\rightarrow \tilde{H}_\ast(B_{\com}U,\mathbb{Q})$ is determined by the formula
\[
h(y_n)=\sum_{j=0}^{n-1} \frac{s(n,n-j)}{\binom{n}{j}} \,\zeta_{j,n-j}\,, \quad n\geq 1\,,
\]
where the $s(n,n-j)$ are the Stirling numbers of the first kind.
\end{repcorollary}

We also describe the splitting equivalence in Theorem \ref{thm:splitting} on rational cohomology: For $k\geq 1$ let $f_k: B_{\com}U \rightarrow BU$ be the composition of the splitting, the projection onto $BU\langle 2k\rangle$ and the covering $BU\langle 2k\rangle\rightarrow BU$. Let $\textnormal{ch}_n\in H^{2n}(B_{\com}U,\mathbb{Q})$ be the $n$-th component of the Chern character.

\begin{repcorollary}{cor:splittingoncohomology}
For all $k\geq 1$ and all $n\geq 1$ the following formula holds
\[
n!\, f_k^\ast(\textnormal{ch}_n)=k! \,\sum_{j=0}^{n-1} \binom{n}{j}\,S(n-j,k)\, z_{j,n-j}\, ,
\]
where the $S(n-j,k)$ are the Stirling numbers of the second kind.
\end{repcorollary}

Finally, we discuss the relationship between the integral cohomology of $B_{\com}U$ and that of $B_{\com}SU(2)$ relying on joint work in progress \cite{antolinbcomu2}. The first interesting commutative $K$-group of spheres is $\tilde{K}^0_{\com}(S^4)\cong \mathbb{Z}\oplus \mathbb{Z}$. In the last part of the paper we use cohomology to prove:
\begin{repproposition}{prop:kcoms4}
The natural map $B_{\com}SU(2)\rightarrow B_{\com}U$ induces an isomorphism
\[
\pi_4(B_{\com}SU(2))\cong \tilde{K}^0_{\com}(S^4)\,,
\]
and the map $B_{\com}U(2)\rightarrow B_{\com}U$ is $4$-connected.
\end{repproposition}

\paragraph{Outline.} In Section \ref{sec:spectrum} we use deformation $K$-theory to represent commutative $K$-theory by a commutative $ku$-algebra spectrum, prove Theorem \ref{thm:spectrumbcomu} and deduce the homotopy ring of $B_{\com}U$. In Section \ref{sec:type} we identify $B_{\com}SU$ as the homotopy fibre of the determinant map, prove Theorem \ref{thm:splitting} and thus identify the homotopy type of $B_{\com}U$ as well as $B_{\com}SU$. We include furthermore a brief discussion of cohomology operations in commutative $K$-theory. In Section \ref{sec:cohomologybcomu} we turn to cohomological results. We prove Theorem \ref{thm:hopfring} which describes the rational Hopf ring of $B_{\com}U$. We use this to describe the rational Hurewicz map, the splitting of $B_{\com}U$ on rational cohomology, and the canonical map $B_{\com}U(2)\rightarrow B_{\com}U$ on integral cohomology. In Section \ref{sec:kcoms4} we prove Proposition \ref{prop:kcoms4} by constructing classifying maps for transitionally commutative $SU(2)$-bundles and computing their characteristic classes. Finally, in Appendix \ref{sec:modulestructure} we offer a description of the inclusion map $B_{\com}G_{\mathds{1}}\rightarrow BG$ on rational cohomology.

\paragraph{Acknowledgements.} This paper contains material of my doctoral thesis which was written at the University of Oxford. I would like to thank my supervisor Ulrike Tillmann for introducing me to the topic, and for all her encouragement and help. I would also like to thank Andr\'e Henriques for a clarifying discussion and Graeme Segal for his useful comments. This work also profited from conversations with Bernardo Villarreal and Omar Antol\'in Camarena.

\section{The spectrum for commutative $K$-theory} \label{sec:spectrum}

In order to analyse the space $B_{\com}U$ we use a model based on the deformation $K$-theory of free abelian groups.

\subsection{Deformation $K$-theory} \label{sec:deformationktheory}

Suppose that $\pi$ is a finitely generated discrete group. We may consider the category $\mathscr{R}(\pi)$ of finite dimensional representations of $\pi$ and their isomorphisms. It is naturally a topological category, that is, it has an object and a morphism space, so that domain, codomain, identity and composition maps are all continuous (cf. \cite{lawsonproductformula}). Namely, if $\mathcal{S}\subset \pi$ is a generating set for $\pi$ and $G$ is a topological group, then $\Hom(\pi,G)$ can always be topologised as a subspace of the finite product $G^\mathcal{S}$. In addition, the category $\mathscr{R}(\pi)$ can be given the structure of a permutative category with monoidal product induced by the direct sum of representations.

\begin{definition}
The \emph{deformation $K$-theory} of $\pi$ is the $K$-theory of the permutative topological category $\mathscr{R}(\pi)$.
\end{definition}
The study of this $K$-theory was first suggested by Carlsson  \cite{carlssonstructuredstablehomotopy}. For example, considering representations in the unitary groups, the category $\mathscr{R}(\pi)$ is the `action groupoid' for the action of $U(n)$ on $\Hom(\pi,U(n))\subset U(n)^{\mathcal{S}}$ by conjugation,
\[
\mathscr{R}(\pi)=\coprod_{n\geq 0} U(n)\ltimes \Hom(\pi,U(n))\,.
\]
Let us write $\kdef(\pi)$ for the unitary deformation $K$-theory spectrum of $\pi$. For example, if $\pi=1$ is the trivial group, then $\kdef(1)$ is the $K$-theory spectrum of the category of finite dimensional unitary vector spaces and isometries, so it is a model for $ku$-theory.

In \cite{lawsonproductformula} Lawson explains how the tensor product of representations gives $\kdef(\pi)$ the structure of a commutative ring spectrum. In fact, he shows that $\kdef(\pi)$ is a $ku$-algebra spectrum via the unit map $ku\simeq \kdef (1)\rightarrow \kdef (\pi)$ induced by the homomorphism $\pi\rightarrow 1$. More precisely, he constructs a functor
\begin{equation} \label{eq:deformationktheory}
\kdef: \left\{\begin{matrix} \textnormal{finitely generated} \\ \textnormal{discrete groups} \end{matrix}\right\}^{\op} \longrightarrow  \textnormal{com-Alg}_{\,ku}\,,
\end{equation}
from finitely generated discrete groups and homomorphisms into the category of commutative $ku$-algebras in symmetric spectra (cf. \cite{hoveysymmetricspectra}).

Let us explain this construction in a bit more detail. Let $\mathbb{C}^\infty$ be equipped with the standard inner product. Let $\Gamma^{\op}$ be the category of finite sets with basepoint and based functions between them. Associated to the group $\pi$ is a $\Gamma$-space (cf. \cite{segalcohomologytheories}) $\mathcal{K}(\pi): \Gamma^{\op}\rightarrow \mathbf{Top}_\ast$ which takes a finite pointed set $S\in \Gamma^{\op}$ to the space
\[
\mathcal{K}(\pi)(S):=\left\{  (W_a,\rho_a)_{a\in S}\ \middle\vert \begin{array}{l}
   W_a\subset \mathbb{C}^\infty \textnormal{ a finite dimensional inner product space}\\
   \rho_a: \pi\rightarrow U(W_a) \textnormal{ a unitary representation} \\
   W_a\perp W_b \textnormal{ if } a\neq b, \ W_a=\{0\}\textnormal{ if }a\textnormal{ is the basepoint}
  \end{array}\right\}\, .
\]
The space $\mathcal{K}(\pi)(S)$ is topologised as a subspace of the product
\[
\left( \coprod_{n\geq 0} V_n\times_{U(n)}\Hom(\pi,U(n))\right)^S\,,
\]
where $V_n$ is the `Stiefel manifold' of orthonormal $n$-frames in $\mathbb{C}^\infty$ and the action of $U(n)$ on $\Hom(\pi,U(n))$ is by isomorphisms of representations (or by conjugation, if we identify $\Hom(\pi,U(n))$ with a subspace of a finite product of copies of $U(n)$). Given a morphism $\alpha: S\rightarrow T$ in $\Gamma^\op$ a map $\mathcal{K}(\pi)(\alpha): \mathcal{K}(\pi)(S)\rightarrow \mathcal{K}(\pi)(T)$ can be defined by
\[
\mathcal{K}(\pi)(\alpha)((W_a,\rho_a)_{a\in S}):=\left( \bigoplus_{a\in \alpha^{-1}(b)} W_a, \bigoplus_{a\in \alpha^{-1}(b)}\rho_a \right)_{b\in T}\,,
\]
which makes sense because the $W_a$ are mutually orthogonal inside $\mathbb{C}^\infty$. The $\Gamma$-space $\mathcal{K}(\pi)$ is special. In fact, it is the $\Gamma$-space associated - in the manner described in \cite[\S 2]{segalcohomologytheories} - to the permutative category $\mathscr{R}(\pi)$. Thus the symmetric spectrum associated to $\mathcal{K}(\pi)$ is an $\Omega$-spectrum above the zero space. We denote this spectrum by $\mathcal{K}(\pi)(\mathbb{S})$ (which means $\mathcal{K}(\pi)$ evaluated on the sphere spectrum).

For $k\geq 0$ let $\mathscr{L}((\mathbb{C}^\infty)^{\otimes k},\mathbb{C}^\infty)$ denote the space of linear isometric embeddings $(\mathbb{C}^{\infty})^{\otimes k}\hookrightarrow \mathbb{C}^\infty$ with the compact-open topology. Lawson observed that the tensor product of representations yields continuous and natural multiplication maps
\[
\mathscr{L}((\mathbb{C}^\infty)^{\otimes k},\mathbb{C}^\infty)_+ \wedge \underset{\stackrel{\longleftrightarrow}{k}}{\mathcal{K}(\pi)\wedge \dots \wedge \mathcal{K}(\pi)}\stackrel{\otimes}{\longrightarrow} \mathcal{K}(\pi)\,,
\]
where the smash product is formed in the topologically enriched permutative category of $\Gamma$-spaces (cf. \cite{lydakissmashproduct}). These multiplication maps can be fed into the machine of Elmendorf and Mandell \cite{elmendorfinfiniteloopspace} which associates to $\mathcal{K}(\pi)$ a so-called $E_{\infty}$-$ku$-algebra object in the category of symmetric spectra (to be precise, here symmetric spectra means symmetric spectra of \emph{simplicial sets}). The general theory of \cite{elmendorfinfiniteloopspace} allows one to rigidify this spectrum, which then yields the functor $\kdef(-)$ in (\ref{eq:deformationktheory}). This procedure is described in detail in \cite[\S 7]{lawsonproductformula}. In particular, the infinite loop space associated to $\kdef(\pi)$ is weakly equivalent to $\Omega^\infty \mathcal{K}(\pi)(\mathbb{S})$.

One may also consider the category of isomorphisms classes of representations
\[
\faktor{\mathscr{R}(\pi)}{\cong} =\coprod_{n\geq 0} \Hom(\pi,U(n))/U(n)\,,
\]
with only identity morphisms. This is a strictly commutative topological ring under direct sum and tensor product of representations. It gives rise to a $\Gamma$-space $\mathcal{R}(\pi)$ and there is an obvious map of $\Gamma$-spaces $\mathcal{K}(\pi)\rightarrow \mathcal{R}(\pi)$ sending a representation to its isomorphism class.

\begin{definition}
The symmetric spectrum determined by $\mathcal{R}(\pi)$ is called the \emph{deformation representation ring spectrum} of $\pi$ (cf. \cite{lawsoncompleted}).
\end{definition}
The deformation representation ring can be modelled as a commutative $H\mathbb{Z}$-algebra in symmetric spectra and is denoted by $R[\pi]$. Via the canonical map $ku\rightarrow H\mathbb{Z}$ the spectrum $R[\pi]$ can also be regarded as a commutative $ku$-algebra. There is then a natural transformation of functors $\kdef(-)\rightarrow R[-]$.

In \cite{lawsonbottcofibre} Lawson shows that $R[\pi]$ is the cofibre of `multiplication by the Bott element' on $\kdef(\pi)$. Let $u\in \pi_2(ku)$ be the Bott class. Using the structure of $\kdef(\pi)$ as a $ku$-module, there is a map $u\cdot - $ defined as the composite
\[
S^2\wedge \kdef(\pi)\xrightarrow{u\wedge id} ku\wedge \kdef(\pi) \longrightarrow \kdef(\pi)\, .
\]

\begin{theorem}[\cite{lawsonbottcofibre}] \label{thm:bottcofibre}
There is a homotopy cofibre sequence of $ku$-modules
\[
\Sigma^2 \kdef (\pi)\stackrel{u\cdot }{\longrightarrow} \kdef (\pi)\longrightarrow R[\pi]\,,
\]
where the first map is `multiplication by the Bott element'.
\end{theorem}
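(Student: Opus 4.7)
The plan is to reduce the theorem to a base-change statement along $ku \to H\mathbb{Z}$. Since $\kdef(\pi)$ is a $ku$-algebra by Lawson's construction, multiplication by $u$ on $\kdef(\pi)$ can be rewritten as $u \wedge_{ku} \textnormal{id}$, and so smashing the classical Bott cofibre sequence $\Sigma^2 ku \xrightarrow{u} ku \to H\mathbb{Z}$ with $\kdef(\pi)$ over $ku$ immediately yields a cofibre sequence of $ku$-modules
\[
\Sigma^2 \kdef(\pi) \xrightarrow{u\cdot} \kdef(\pi) \longrightarrow H\mathbb{Z} \wedge_{ku} \kdef(\pi)\,.
\]
It therefore suffices to produce, compatibly with the natural transformation $\kdef(\pi) \to R[\pi]$, an equivalence $H\mathbb{Z} \wedge_{ku} \kdef(\pi) \simeq R[\pi]$.

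To construct the comparison map I would first check that the composite $\Sigma^2 \kdef(\pi) \xrightarrow{u\cdot} \kdef(\pi) \to R[\pi]$ is nullhomotopic. Geometrically the Bott class corresponds to tensoring a representation with the Hopf line bundle over $S^2 = \mathbb{CP}^1$, and tensoring with a trivial one-dimensional representation of $\pi$ does not alter the isomorphism class of a representation; this manifests at the level of $\Gamma$-spaces because $\mathcal{R}(\pi)$ factors out automorphisms. Hence the map $\kdef(\pi) \to R[\pi]$ extends canonically to a map of $H\mathbb{Z}$-algebras $\varphi\colon H\mathbb{Z} \wedge_{ku} \kdef(\pi) \to R[\pi]$.

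To show $\varphi$ is an equivalence I would work levelwise. At level $n$ the $\Gamma$-space $\mathcal{K}(\pi)$ returns the homotopy-quotient model $V_n \times_{U(n)} \Hom(\pi, U(n))$, while $\mathcal{R}(\pi)$ returns the naive orbit space $\Hom(\pi, U(n))/U(n)$. The natural projection is a fibration whose fibre over $\rho$ is $B\Aut(\rho)$, where $\Aut(\rho)$ is the centraliser of $\rho$ in $U(n)$; for $\rho$ with irreducible decomposition of type $(m_1, m_2, \ldots)$ this fibre is $\prod_i BU(m_i)$. The operation $(-)/u$ applied to a $ku$-module should collapse these fibres, because the augmentation $ku_\ast(BU(m)) \to \mathbb{Z}$ factors through the reduction mod $u$. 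Tracking this cellwise through the $\Gamma$-space and Elmendorf-Mandell machinery should produce the desired equivalence.

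The main obstacle will be making this last identification rigorous and \emph{natural} at the spectrum level. The fibres $\prod_i BU(m_i)$ are not all equal to $BU(1)$, so the reduction mod $u$ goes beyond a direct invocation of classical Bott periodicity. I would approach this through a filtration argument, filtering $\Hom(\pi, U(n))$ by the irreducible type of $\rho$ and inducting on the total dimension, or alternatively by invoking Snaith's splitting of $ku \wedge BU(m)_+$ into wedges of suspensions of $ku$ to reduce the general case component-by-component to the irreducible one. Either way the principal difficulty is to assemble these local identifications into a globally natural equivalence of spectra, and in particular to verify that the comparison respects the $\Gamma$-space structure so that the cofibre identification holds functorially in $\pi$.
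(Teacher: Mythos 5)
The paper itself does not prove this statement: it is imported verbatim from Lawson's article on the Bott cofibre sequence, so the only meaningful comparison is with Lawson's proof. Your skeleton does match his: reduce to identifying the cofibre $\kdef(\pi)/u\simeq H\mathbb{Z}\wedge_{ku}\kdef(\pi)$ with $R[\pi]$, obtain the comparison map from a nullhomotopy of the composite $\Sigma^2\kdef(\pi)\xrightarrow{u\cdot}\kdef(\pi)\to R[\pi]$, and prove it is an equivalence by filtering $\Hom(\pi,U(n))$ by isomorphism type of representations. (For the nullhomotopy you do not need any geometry about tensoring with trivial representations: $\kdef(\pi)\to R[\pi]$ is a map of $ku$-modules and $R[\pi]$ is an $H\mathbb{Z}$-module, so $u$ acts on it through its image in $\pi_2(H\mathbb{Z})=0$, and the composite is canonically null.)

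The genuine gap is in the step where you explain why the fibres collapse, and it is the crux of the whole theorem. You observe that the fibre of $V_n\times_{U(n)}\Hom(\pi,U(n))\to\Hom(\pi,U(n))/U(n)$ over $\rho$ is $B\Aut(\rho)\simeq\prod_i BU(m_i)$, and claim that $(-)/u$ collapses it because the augmentation $ku_\ast(BU(m))\to\mathbb{Z}$ factors through reduction mod $u$. That inference is a non sequitur: if the fibre entered the spectrum as $ku\wedge BU(m)_+$, then killing $u$ would produce $H\mathbb{Z}\wedge BU(m)_+$, which is very far from $H\mathbb{Z}$ -- and for the same reason the Snaith-type splitting of $ku\wedge BU(m)_+$ you offer as an alternative is beside the point. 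Moreover, at level $n$ of the $\Gamma$-space there is no $ku$-module structure yet, so "applying $(-)/u$ levelwise" does not typecheck. The actual mechanism, which is where Lawson's work lies, is that the fibre $BU(m)$ appears as the classifying space of the unitary automorphism group of $\rho$; by Schur's lemma the groupoid of $\rho$-isotypical representations is equivalent to the groupoid of finite-dimensional inner product spaces, so after group completion this stratum contributes a genuine copy of the spectrum $ku$ (smashed with a space of irreducibles), not $ku\wedge BU(m)_+$. It is this internal copy of $ku$ that Bott periodicity converts to $H\mathbb{Z}$, collapsing $\mathbb{Z}\times BU$ to $\mathbb{Z}$ and thereby remembering only the multiplicities -- which is exactly the passage from $\mathcal{K}(\pi)$ to $\mathcal{R}(\pi)$. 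Making the filtration by stabilizer type precise so that each associated graded piece is literally $ku\wedge X$ matching the piece $H\mathbb{Z}\wedge X$ of $R[\pi]$, naturally in $\pi$, is the content of the theorem, and your sketch leaves exactly this open.
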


\subsection{The spectrum $\kdef(\mathbb{Z}^\ast)$}

We now define a spectrum for $B_{\com}U$. For this it is convenient to co-represent the simplicial space $k\mapsto \Hom(\mathbb{Z}^k,U)$. Let $F_k$ denote the free group on $k$ generators $x_1,\dots,x_k$.

\begin{definition}
\label{def:cosimplicialobject}
Define a cosimplicial group $F_\ast: \Delta\rightarrow \mathbf{Grp}$ with co-face maps $d^i: F_{k-1}\rightarrow F_{k}$ given by
\begin{equation*}
d^ix_j=\begin{cases} x_j\,, & j<i \\ x_jx_{j+1}\,,& j=i \\ x_{j+1}\,,& j>i\end{cases}
\end{equation*}
for $0\leq i\leq k$ and $k>0$ and co-degeneracy maps $s^i: F_{k+1}\rightarrow F_k$ given by
\begin{equation*}
s^ix_j=\begin{cases} x_j\,, & j<i+1 \\ 1\,,& j=i+1 \\ x_{j-1}\,,& j>i+1\end{cases}
\end{equation*}
for $0\leq i\leq k$ and $k\geq 0$. Another cosimplicial group $\mathbb{Z}^\ast$ sending $k\mapsto \mathbb{Z}^k$ is defined by simplicialwise abelianisation of $F_{\ast}$. It comes with a morphism $F_{\ast}\rightarrow \mathbb{Z}^{\ast}$.
\end{definition}

It is easily verified that the composite $\Hom(F_\ast,U):=\Hom(-,U)\circ F_\ast$ is the simplicial bar construction for $U$, and $\Hom(\mathbb{Z}^\ast,U)$ is the simplicial space whose realisation is $B_{\com}U$. The morphism of cosimplicial groups $F_\ast\rightarrow \mathbb{Z}^\ast$ induces the canonical map $i: B_{\com}U\rightarrow BU$.

\begin{definition} \label{def:spectrume}
Define a commutative $ku$-algebra $E:=|\kdef(\mathbb{Z}^\ast)|$.
\end{definition}

Recall that the deformation $K$-theory spectrum $\kdef(\mathbb{Z}^k)$ comes from a $\Gamma$-space $\mathcal{K}(\mathbb{Z}^k)$. The assignment $k\mapsto \mathcal{K}(\mathbb{Z}^k)$ defines a simplicial $\Gamma$-space. Let $|\mathcal{K}(\mathbb{Z}^\ast)|$ be the $\Gamma$-space whose value on a finite set $S\in \Gamma^{\op}$ is the geometric realisation $|\mathcal{K}(\mathbb{Z}^\ast)|(S):=|k\mapsto \mathcal{K}(\mathbb{Z}^k)(S)|$.

\begin{lemma} \label{lem:loopsinfinity1}
There is a zig-zag of stable equivalences between $E$ and the symmetric spectrum associated to the $\Gamma$-space $|\mathcal{K}(\mathbb{Z}^\ast)|$.
\end{lemma}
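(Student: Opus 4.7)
The plan is to split the claimed zig-zag into two stages. In the first stage I replace each $\kdef(\mathbb{Z}^k)$ by the symmetric spectrum $\mathcal{K}(\mathbb{Z}^k)(\mathbb{S})$ coming directly from the $\Gamma$-space $\mathcal{K}(\mathbb{Z}^k)$; in the second stage I commute the geometric realization in the simplicial coordinate past the $\Gamma$-space-to-spectrum functor.

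For the first stage, I would appeal to Lawson's construction of $\kdef$ via the Elmendorf--Mandell machine recalled above: applied to the $\Gamma$-space $\mathcal{K}(\pi)$ together with its tensor product pairings, the machine produces a commutative $ku$-algebra whose underlying symmetric spectrum is joined to $\mathcal{K}(\pi)(\mathbb{S})$ by a zig-zag of stable equivalences which is natural in group homomorphisms $\pi\to\pi'$. Applying this natural zig-zag objectwise to the cosimplicial group $\mathbb{Z}^\ast$ of Definition \ref{def:cosimplicialobject} yields a zig-zag of simplicial symmetric spectra between $k\mapsto\kdef(\mathbb{Z}^k)$ and $k\mapsto \mathcal{K}(\mathbb{Z}^k)(\mathbb{S})$, and geometric realization then gives a zig-zag between $E$ and $|k\mapsto \mathcal{K}(\mathbb{Z}^k)(\mathbb{S})|$, provided realization preserves the relevant stable equivalences.

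For the second stage, recall that the symmetric spectrum associated to a $\Gamma$-space $F$ has $n$-th space $F(S^n)$. Since realization of symmetric spectra is computed levelwise and commutes with evaluating a $\Gamma$-space on a fixed pointed simplicial set, at level $n$ one gets the tautological identification
\[
|k\mapsto \mathcal{K}(\mathbb{Z}^k)(S^n)| \;\cong\; |\mathcal{K}(\mathbb{Z}^\ast)|(S^n)
\]
from the very definition of $|\mathcal{K}(\mathbb{Z}^\ast)|$. These identifications assemble into an isomorphism of spectra between $|k\mapsto\mathcal{K}(\mathbb{Z}^k)(\mathbb{S})|$ and the spectrum of $|\mathcal{K}(\mathbb{Z}^\ast)|$, completing the zig-zag.

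The principal obstacle is the bookkeeping in the first stage: the Elmendorf--Mandell rigidification is only natural up to further intermediate zig-zags involving bar constructions on operads, so one must verify that geometric realization preserves the entire chain of weak equivalences. The key input is that $\mathcal{K}(\pi)(S)$ is built levelwise as an evident subspace of a product of homotopy orbit spaces $V_n\times_{U(n)}\Hom(\pi,U(n))$, and the simplicial structure on $k\mapsto \mathcal{K}(\mathbb{Z}^k)$ is induced from maps of cosimplicial groups that send generators to generators; this should provide enough Reedy cofibrancy to conclude that realization is homotopically well behaved on every term of the zig-zag, and hence that the composed zig-zag descends to a stable equivalence.
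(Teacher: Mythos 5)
Your two-stage decomposition matches the shape of the paper's argument, but there is a genuine gap in the second stage, and it is exactly where the real work of the lemma lies. Lawson's rigidification (via Elmendorf--Mandell) takes place in symmetric spectra of \emph{simplicial sets}, so the natural zig-zag out of $\kdef(\mathbb{Z}^k)$ terminates at the spectrum of the $\Gamma$-space $\textnormal{Sing}\circ\mathcal{K}(\mathbb{Z}^k)$, not at the topological $\Gamma$-space $\mathcal{K}(\mathbb{Z}^k)$ itself. After realizing in the $k$-direction you are therefore left with the adjunction counit $|\textnormal{Sing}\circ\mathcal{K}(\mathbb{Z}^\ast)|(\mathbb{S})\rightarrow|\mathcal{K}(\mathbb{Z}^\ast)|(\mathbb{S})$, and showing this is a stable equivalence is not a tautological commutation of colimits. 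The paper's route is: both sides are $\Omega$-spectra above level zero, so it suffices to check the level-one map $|\textnormal{Sing}\circ\mathcal{K}(\mathbb{Z}^\ast)|(S^1)\rightarrow|\mathcal{K}(\mathbb{Z}^\ast)|(S^1)$ is a weak equivalence; this is a levelwise (in $k$) weak equivalence of simplicial spaces, and to realize it one needs $k\mapsto\mathcal{K}(\mathbb{Z}^k)(S^1)$ to be \emph{proper}. That properness is a separate lemma whose proof rests on the strong $U(\underline{n})$-equivariant NDR-pair theorem of Adem--Cohen--G\'omez for the degenerate simplices of $\Hom(\mathbb{Z}^\ast,U(\underline{n}))$. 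Your closing appeal to ``Reedy cofibrancy'' coming from the cosimplicial structure maps ``sending generators to generators'' does not supply this: the degeneracies of the simplicial space are induced by the co-degeneracies $s^i$, which kill a generator, and the issue is precisely whether the subspace of degenerate simplices of $\mathcal{K}(\mathbb{Z}^k)(S^1)$ is suitably cofibred --- a nontrivial point-set statement, not a formal consequence of the indexing.

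A secondary point: for the first stage you leave open how realization preserves the chain of stable equivalences and propose to verify cofibrancy of every term of the Elmendorf--Mandell zig-zag, which would be painful. The paper sidesteps this entirely by citing Shipley's result that geometric realization of a map of simplicial symmetric spectra which is a levelwise stable equivalence is a stable equivalence; no cofibrancy hypotheses are needed there. So the cofibrancy/properness input is needed only once, in the $\textnormal{Sing}$-versus-topological comparison, and it is the one input your sketch does not actually secure.
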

\begin{proof}
Let $\textnormal{Sing}$ denote the singular complex functor from topological spaces to simplicial sets. The rigidification in \cite[\S 7]{lawsonproductformula} produces a zig-zag of stable equivalences between $\kdef(\mathbb{Z}^k)$ and the symmetric spectrum associated to the $\Gamma$-space $\textnormal{Sing}\circ \mathcal{K}(\mathbb{Z}^k)$. Furthermore, the maps in the zig-zag are natural with respect to homomorphisms of groups, so there is a zig-zag of maps of simplicial symmetric spectra
\[
(\textnormal{Sing}\circ \mathcal{K}(\mathbb{Z}^\ast))(\mathbb{S}) \leftarrow \cdots \rightarrow \kdef(\mathbb{Z}^\ast)
\]
which are stable equivalences in each simplicial degree. By \cite[Cor. 4.1.6]{shipleysymmetricspectra} this induces a stable equivalence on geometric realisations, i.e. between $E$ and $|\textnormal{Sing}\circ \mathcal{K}(\mathbb{Z}^\ast)|(\mathbb{S})$. By adjunction, we have a map $|\textnormal{Sing}\circ \mathcal{K}(\mathbb{Z}^\ast)|(\mathbb{S})\rightarrow |\mathcal{K}(\mathbb{Z}^\ast)|(\mathbb{S})$ which we need to show is a stable equivalence. Since both spectra are $\Omega$-spectra above the zero space, it suffices to check that the map on level one spaces $|\textnormal{Sing}\circ \mathcal{K}(\mathbb{Z}^\ast)|(S^1)\rightarrow |\mathcal{K}(\mathbb{Z}^\ast)|(S^1)$ is a weak homotopy equivalence. The map is a weak equivalence in every simplicial degree, so the result follows from \cite[Thm. A.4]{mayeinftyspaces} and the fact that $k\mapsto \mathcal{K}(\mathbb{Z}^k)(S^1)$ is a proper simplicial space (cf. Lemma \ref{lem:proper}).
\end{proof}

\begin{lemma} \label{lem:proper}
The simplicial space $k\mapsto \mathcal{K}(\mathbb{Z}^k)(S^1)$ is proper.
\end{lemma}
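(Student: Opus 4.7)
The plan is to verify that each degeneracy $s_i \colon \mathcal{K}(\mathbb{Z}^k)(S^1)\to \mathcal{K}(\mathbb{Z}^{k+1})(S^1)$ is a closed cofibration. Since $\mathcal{K}(\mathbb{Z}^k)(S^1)=|[q]\mapsto \mathcal{K}(\mathbb{Z}^k)(S^1_q)|$ is itself the realisation (in $q$) of a simplicial space over the simplicial circle, and the $k$-direction structure maps induced by the cosimplicial group $\mathbb{Z}^\ast$ commute with the $q$-direction structure, it suffices to prove that for every finite pointed set $S$ the degeneracy $s_i \colon \mathcal{K}(\mathbb{Z}^k)(S) \to \mathcal{K}(\mathbb{Z}^{k+1})(S)$ is a closed cofibration. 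Properness will then propagate through the realisation in $q$ via the standard fact that the realisation of a levelwise closed cofibration between proper simplicial spaces is a closed cofibration. Using the description
\[
\mathcal{K}(\mathbb{Z}^k)(S)\;=\;\coprod_{(n_1,\dots,n_m)} V_{n_1+\cdots+n_m}\times_{U(n_1)\times\cdots\times U(n_m)}\prod_{j=1}^m \Hom(\mathbb{Z}^k, U(n_j))
\]
for $S=\{\ast,a_1,\dots,a_m\}$, the degeneracy affects only the representation factors by inserting $1$ at slot $i+1$. Since coproducts, finite products and Borel constructions along compact Lie group actions all preserve closed cofibrations between equivariant spaces, the task reduces to the single basic inclusion $\Hom(\mathbb{Z}^k,U(n))\hookrightarrow \Hom(\mathbb{Z}^{k+1}, U(n))$.

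For that building block I would write down an explicit $U(n)$-equivariant NDR pair using the Lie group structure of $U(n)$. Choose $\epsilon>0$ small enough that $\log$ is defined on the open $\epsilon$-ball $V$ about $1\in U(n)$ in a bi-invariant metric, and let $\phi\colon U(n)\to [0,1]$ be a conjugation-invariant bump function equal to $1$ on the ball of radius $\epsilon/2$ and vanishing outside $V$. Define
\[
H_t(h_1,\dots,h_{k+1}) \;=\; \bigl(h_1,\dots,h_i,\;\exp\bigl((1-t\phi(h_{i+1}))\log h_{i+1}\bigr),\;h_{i+2},\dots,h_{k+1}\bigr)\,,
\]
extended by the identity outside $\{h_{i+1}\in V\}$; this glues continuously because $\phi$ vanishes near $\partial V$. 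The crucial point is that $\exp((1-t\phi(h_{i+1}))\log h_{i+1})$ lies in the one-parameter subgroup through $h_{i+1}$, and therefore commutes with every element that already commutes with $h_{i+1}$; hence $H_t$ remains inside the commuting variety $\Hom(\mathbb{Z}^{k+1},U(n))$. Combined with the conjugation-invariant Urysohn function $u(h_\bullet)=\min(1,2d(h_{i+1},1)/\epsilon)$, this exhibits the inclusion as an equivariant NDR pair.

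The main obstacle is really the verification that this deformation does not leave the commuting variety; once one observes that $\log h_{i+1}$ expands as a convergent power series in $h_{i+1}-1$ and therefore commutes with everything that commutes with $h_{i+1}$, the remainder of the argument — propagating the cofibration through the Borel construction, the disjoint union, and the realisation in the simplicial-circle direction — is routine bookkeeping with standard preservation properties of closed cofibrations.
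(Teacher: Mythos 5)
Your route is genuinely different from the paper's at the key step, and the core construction is sound. The paper does not build the deformation retraction by hand: it quotes \cite[Thm.~4.8]{ademstablesplittings} for the statement that $(\Hom(\mathbb{Z}^k,U(\underline{n})),S_k)$ is a \emph{strong $U(\underline{n})$-equivariant NDR pair}, where $S_k$ is the entire degenerate subspace, and then pushes this through the Borel construction exactly as you do; for the circle direction it observes that the $q$-degeneracies are inclusions of connected components, hence the bisimplicial space is good in that direction, and realises there first. Your explicit one-parameter-subgroup homotopy $\exp\bigl((1-t\phi(h_{i+1}))\log h_{i+1}\bigr)$ is correct --- the power-series argument does keep the deformation inside the commuting variety, the data are conjugation-invariant so the pair is equivariant, and the collar is preserved so the pair is in fact strong --- and it is essentially a self-contained reproof of the input the paper imports. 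What the paper's approach buys is brevity and the fact that the cited theorem already delivers the NDR structure for the \emph{union} of the degeneracies; what yours buys is independence from that reference.

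The one genuine gap is precisely that union. Properness is a condition on the inclusion $sX_{k+1}=\bigcup_i s_i(X_k)\hookrightarrow X_{k+1}$, i.e.\ on the subspace of tuples having \emph{some} coordinate equal to $1$, not on each $s_i$ separately, and your reduction stops at the individual degeneracies. Note that you cannot simply run your coordinate-wise homotopies simultaneously to retract a neighbourhood of the union: a tuple with $h_j=1$ but another coordinate $h_l$ close to $1$ would be moved, violating the requirement that the homotopy fix the degenerate subspace pointwise. The standard repair is Lillig's union theorem (in the form used by Lewis for simplicial spaces): the finite intersections of the $s_i$-images are the images of iterated degeneracies, i.e.\ tuples with a prescribed subset of coordinates equal to $1$, and your construction applied to exactly those coordinates exhibits each such intersection as an equivariant (strong) NDR pair; the union theorem then makes $sX_{k+1}\hookrightarrow X_{k+1}$ a cofibration. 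With that citation added --- and a one-line check that the resulting pairs are strong, since May's Theorem A.4, which Lemma \ref{lem:loopsinfinity1} invokes, is stated for proper simplicial spaces in that sense --- your argument closes.
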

\begin{proof}
Let $\mathbf{m}^+\in \Gamma^{\op}$ be the set $\{0,1,\dots,m\}$ with $0$ as basepoint. We first show that for fixed $m\geq 0$ the simplicial space $k\mapsto \mathcal{K}(\mathbb{Z}^k)(\mathbf{m}^+)$ is proper. Let $V_n$ be the Stiefel manifold of orthonormal $n$-frames in $\mathbb{C}^\infty$. By definition, $\mathcal{K}(\mathbb{Z}^k)(\mathbf{m}^+)$ is a disjoint union over spaces of the form
\[
V_n\times_{U(n_1)\times \cdots \times U(n_m)}\Hom(\mathbb{Z}^k,U(n_1))\times \cdots \times \Hom(\mathbb{Z}^k,U(n_m))\,,
\]
where the $n_i$ are non-negative integers with $\sum_{i=1}^m n_i=n$. Let $\underline{n}=(n_1,\dots,n_m)$ and write $U(\underline{n})=U(n_1)\times \cdots \times U(n_m)$. Let $S_k\subset \Hom(\mathbb{Z}^k,U(\underline{n}))$ be the space of degenerate $k$-simplices in the simplicial space $\Hom(\mathbb{Z}^\ast,U(\underline{n}))$. It is proved in \cite[Thm. 4.8]{ademstablesplittings} (in their notation, we choose $G=U(\underline{n})$, $K=1$ and $r=1$) that the pair of spaces $(\Hom(\mathbb{Z}^k,U(\underline{n})),S_k)$ is a strong $U(\underline{n})$-equivariant NDR pair. It follows that the pair $(V_n\times_{U(\underline{n})} \Hom(\mathbb{Z}^k,U(\underline{n})),V_n\times_{U(\underline{n})}S_k)$ is strongly NDR and, therefore, $k\mapsto \mathcal{K}(\mathbb{Z}^k)(\mathbf{m}^+)$ is proper.

Let $\Delta^1/\partial\Delta^1$ be the simplicial circle with $m+1$ simplices in degree $m$. For every fixed $k\geq 0$, $\mathcal{K}(\mathbb{Z}^k)(\Delta^1/\partial \Delta^1)$ is a simplicial space, in which the degeneracy maps $\mathcal{K}(\mathbb{Z}^k)(\mathbf{m}^+)\rightarrow \mathcal{K}(\mathbb{Z}^k)(\mathbf{m+1}^+)$ are inclusions of connected components, thus closed cofibrations. It follows that the bisimplicial space $\mathcal{K}(\mathbb{Z}^\ast)(\Delta^1/\partial\Delta^1)$ is proper in the simplicial direction of $\mathbb{Z}^\ast$ and good in the simplicial direction of the circle. Now we use the fact that levelwise cofibrations between good simplicial spaces induce a cofibration on realisations (see \emph{e.g.} \cite[\S 14-5]{warnerhomotopytheory}) to see that if we realise first in the `good' direction, the resulting simplicial space $k\mapsto \mathcal{K}(\mathbb{Z}^k)(S^1)$ is proper.
\end{proof}

\begin{lemma} \label{lem:loopsinfinity2}
There is a weak equivalence $\Omega^\infty |\mathcal{K}(\mathbb{Z}^\ast)|(\mathbb{S})\simeq \mathbb{Z}\times B_{\com}U\sslash U$.
\end{lemma}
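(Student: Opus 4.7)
The plan is to first identify the underlying $E_{\infty}$-monoid $|\mathcal{K}(\mathbb{Z}^\ast)|(\mathbf{1}^+)$ concretely, then apply a group-completion argument. Since $V_n$ is a contractible space with free $U(n)$-action (filtered by finite Stiefel manifolds), it is a model for $EU(n)$, so for each $k$
\[
\mathcal{K}(\mathbb{Z}^k)(\mathbf{1}^+)\;=\;\coprod_{n\geq 0} V_n\times_{U(n)}\Hom(\mathbb{Z}^k,U(n))\;\simeq\;\coprod_{n\geq 0} EU(n)\times_{U(n)}\Hom(\mathbb{Z}^k,U(n)).
\]
Geometric realisation commutes with coproducts, and since the action of $U(n)$ on each $\Hom(\mathbb{Z}^k,U(n))$ is simplicial, it commutes with the Borel construction (using that $V_n$ is cofibrant as a free $U(n)$-space and that the simplicial space $k\mapsto \Hom(\mathbb{Z}^k,U(n))$ is proper, by the NDR argument in the proof of Lemma \ref{lem:proper}). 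This yields
\[
|\mathcal{K}(\mathbb{Z}^\ast)|(\mathbf{1}^+)\;\simeq\;\coprod_{n\geq 0} EU(n)\times_{U(n)} B_{\com}U(n),
\]
and one checks that the $E_{\infty}$-structure induced by the $\Gamma$-space is the block-sum pairing coming from $U(n)\times U(m)\hookrightarrow U(n+m)$.

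The $\Gamma$-space $|\mathcal{K}(\mathbb{Z}^\ast)|$ is special because it is a levelwise realisation of special $\Gamma$-spaces (the specialness maps are levelwise weak equivalences of proper simplicial spaces). Hence its associated symmetric spectrum is an $\Omega$-spectrum above level zero, so
\[
\Omega^\infty |\mathcal{K}(\mathbb{Z}^\ast)|(\mathbb{S})\;\simeq\;\Omega B\Bigl(\coprod_{n\geq 0} EU(n)\times_{U(n)} B_{\com}U(n)\Bigr),
\]
where $B$ denotes the bar construction on the underlying $E_{\infty}$-monoid. Since each space $\Hom(\mathbb{Z}^k,U(n))$ is connected (any commuting tuple of unitaries can be simultaneously diagonalised and then deformed to the identity), each $B_{\com}U(n)$ is connected, so the monoid of components is $\mathbb{N}$. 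The group-completion theorem (in the form for homotopy-commutative topological monoids with finitely generated $\pi_0$) then gives
\[
\Omega B\Bigl(\coprod_{n\geq 0} EU(n)\times_{U(n)} B_{\com}U(n)\Bigr)\;\simeq\;\mathbb{Z}\times \colim_{n}\bigl(EU(n)\times_{U(n)} B_{\com}U(n)\bigr),
\]
the colimit being taken along block sum with the identity in $U(1)$.

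It remains to identify the telescope with $B_{\com}U\sslash U$. The inclusions $U(n)\hookrightarrow U(n+1)$ and $B_{\com}U(n)\hookrightarrow B_{\com}U(n+1)$ are $U(n)$-equivariant closed cofibrations, so the Borel construction commutes with these filtered colimits, giving
\[
\colim_{n} EU(n)\times_{U(n)} B_{\com}U(n)\;\simeq\;EU\times_U B_{\com}U\;=\;B_{\com}U\sslash U,
\]
where the action of $U$ is the simplicialwise conjugation action. Combining the two displays produces the asserted weak equivalence. The main technical obstacle is the justification that realisation commutes with the Borel construction and that the filtered colimit agrees with $B_{\com}U\sslash U$; both rely on the properness and NDR-cofibration properties of the relevant simplicial spaces established in Lemma \ref{lem:proper}, together with the observation that conjugation by elements of $U(n)$ embedded block-diagonally into $U(n+k)$ becomes homotopic to the identity after passing to $U$, so the block-sum stabilisation and the conjugation action are compatible in the limit.
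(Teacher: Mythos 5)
Your overall strategy is the same as the paper's: identify the underlying $E_\infty$-monoid of the $\Gamma$-space $|\mathcal{K}(\mathbb{Z}^\ast)|$ as $\coprod_{n\geq 0} B_{\com}U(n)\sslash U(n)$ (you do this via the explicit formula with the Stiefel manifolds $V_n\simeq EU(n)$, the paper via the associated permutative category $\coprod_n U(n)\ltimes B_{\com}U(n)$ --- these are the same identification), then group-complete, then identify the stabilised telescope with $B_{\com}U\sslash U$ using that the maps $\Hom(\mathbb{Z}^k,U(n))\to\Hom(\mathbb{Z}^k,U(n+1))$ are cofibrations and that $\mathbb{Z}^k$ is finitely generated. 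Your closing remark about conjugation becoming compatible with block-sum stabilisation in the limit is unnecessary: the stabilisation map $B_{\com}U(n)\sslash U(n)\to B_{\com}U(n+1)\sslash U(n+1)$ is simply induced by the equivariant inclusion $U(n)\hookrightarrow U(n+1)$, and block sum with $1\in U(1)$ is exactly this inclusion, so there is nothing to reconcile.

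There is one genuine gap. The group-completion theorem of McDuff--Segal produces a map
\[
\mathbb{Z}\times \tel_{n\to\infty} B_{\com}U(n)\sslash U(n)\longrightarrow \Omega B\Bigl(\coprod_{n\geq 0} B_{\com}U(n)\sslash U(n)\Bigr)
\]
which is an \emph{integral homology} equivalence, not a priori a weak homotopy equivalence; your displayed ``$\simeq$'' at this step is not what the theorem gives. To upgrade it you must observe that the left-hand side is simply connected (or at least simple): each $B_{\com}U(n)\sslash U(n)$ is simply connected, either because it is the realisation of a simplicial space with a single vertex and a connected space of $1$-simplices, or because it fibres over the simply connected $BU(n)$ with simply connected fibre $B_{\com}U(n)$. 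A homology equivalence between simply connected spaces is a weak equivalence, and this closes the argument; the paper makes exactly this point explicitly.
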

\begin{proof}
The $\Gamma$-space $|\mathcal{K}(\mathbb{Z}^\ast)|$ is the one associated to the permutative category
\begin{equation} \label{eq:translationcat}
\coprod_{n\geq 0}U(n)\ltimes B_{\com}U(n)\,,
\end{equation}
which is the action groupoid for the action of $U(n)$ on $B_{\com}U(n)$ by conjugation. The infinite loop space $\Omega^\infty |\mathcal{K}(\mathbb{Z}^\ast)|(\mathbb{S})=\Omega |\mathcal{K}(\mathbb{Z}^\ast)|(S^1)$ is then the group-completion of the classifying space of (\ref{eq:translationcat}). The classifying space of (\ref{eq:translationcat}) is the homotopy orbit $\coprod_{n\geq 0} B_{\com}U(n)\sslash U(n)$, so by the group-completion theorem \cite{mcduffsegalgroupcompletion} we get a map
\[
\mathbb{Z}\times \tel_{n\to \infty} \,B_{\com}U(n)\sslash U(n)\rightarrow \Omega^\infty  |\mathcal{K}(\mathbb{Z}^\ast)|(\mathbb{S})\,,
\]
which is an integer homology equivalence. In fact, it is a weak equivalence, because the telescope is simply connected: For all $n\geq 1$, the space $B_{\com}U(n)\sslash U(n)$ is simply connected, because it is the geometric realisation of a simplicial space with only one vertex and a connected space of $1$-simplices. Finally, $\tel_{n\to \infty} \,B_{\com}U(n)\sslash U(n)\simeq B_{\com}U\sslash U$. This follows by first commuting the telescope with geometric realisation, then replacing the telescope by a colimit (using the fact that the maps $\Hom(\mathbb{Z}^k,U(n))\rightarrow \Hom(\mathbb{Z}^k,U(n+1))$ are cofibrations), and finally identifying $\colim_n \Hom(\mathbb{Z}^k,U(n))\cong \Hom(\mathbb{Z}^k,U)$, since $\mathbb{Z}^k$ is finitely generated.
\end{proof}

We now identify the spectrum $E$.

\begin{lemma} \label{lem:rzk}
There is a stable equivalence of commutative $H\mathbb{Z}$-algebras
\[
|R[\mathbb{Z}^\ast]|\simeq H\mathbb{Z}\wedge BU(1)_+\,.
\]
\end{lemma}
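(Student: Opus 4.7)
The plan is to identify $|R[\mathbb{Z}^\ast]|$ levelwise as a Dold--Thom type construction and then commute geometric realisation past the symmetric product functor.

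The starting point is the classical fact that commuting tuples of unitaries can be simultaneously diagonalised. This yields a natural homeomorphism
\[
\Hom(\mathbb{Z}^k, U(n))/U(n) \;\cong\; SP^n(T^k)\,,\qquad T^k := U(1)^k\,,
\]
sending a conjugacy class to the unordered $n$-tuple of its joint eigenvalues (characters). This identification is simplicial in $k$ with respect to the cosimplicial structure on $\mathbb{Z}^\ast$, and it is compatible with direct sums of representations (which correspond to concatenation in $\coprod_n SP^n(T^k)$). Hence the commutative topological monoid underlying the $\Gamma$-space $\mathcal{R}(\mathbb{Z}^k)$ is $\coprod_n SP^n(T^k)$, simplicially in $k$.

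Next, since $U(1)$ is abelian every tuple automatically commutes, and one has
\[
|k\mapsto T^k| \;=\; |k\mapsto \Hom(\mathbb{Z}^k, U(1))| \;=\; B_{\com}U(1) \;=\; BU(1)\,.
\]
The functor $SP^n$ preserves geometric realisations of proper simplicial spaces, and the properness of $k\mapsto T^k$ is a direct special case of Lemma \ref{lem:proper}. Hence $|k\mapsto SP^n(T^k)| \simeq SP^n(BU(1))$, and therefore
\[
\left|k\mapsto \coprod_n SP^n(T^k)\right| \;\simeq\; \coprod_n SP^n(BU(1))
\]
as commutative topological monoids. The $\Gamma$-space built from this monoid under disjoint union, by the Dold--Thom theorem, has group completion equal to the free topological abelian group on $BU(1)$, whose homotopy groups are $H_\ast(BU(1);\mathbb{Z})$; the associated spectrum is precisely $H\mathbb{Z}\wedge BU(1)_+$.

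For the commutative $H\mathbb{Z}$-algebra enhancement, I would trace through the tensor product of representations: after simultaneous diagonalisation, the tensor of two character tuples is their pointwise product in $T^k$, and on realisation this becomes the standard $H$-space multiplication on $BU(1)$ given by tensor product of line bundles, which is precisely the multiplication that turns $H\mathbb{Z}\wedge BU(1)_+$ into a commutative $H\mathbb{Z}$-algebra. The main obstacle is making the string of identifications rigorous on the rigidified level of commutative $H\mathbb{Z}$-algebras in symmetric spectra rather than merely as spectra or infinite loop spaces; I expect this to follow by the same strategy used in Lemma \ref{lem:loopsinfinity1}, threading a zig-zag of stable equivalences of simplicial symmetric spectra through the rigidification machinery of \cite[\S 7]{lawsonproductformula} and invoking \cite[Cor.~4.1.6]{shipleysymmetricspectra} to pass to realisations.
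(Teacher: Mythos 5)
Your proposal is correct and follows essentially the same route as the paper: both rest on the spectral-theorem identification $\coprod_n\Hom(\mathbb{Z}^k,U(n))/U(n)\cong SP^\infty(U(1)^k_+)$ (naturally in $k$ and compatibly with $\oplus$ and $\otimes$), the Dold--Thom theorem to recognise the associated spectrum as $H\mathbb{Z}\wedge U(1)^k_+$, and geometric realisation over $k$ using $|k\mapsto U(1)^k|=BU(1)$. The only cosmetic difference is that you realise first and then apply Dold--Thom, whereas the paper applies Dold--Thom levelwise and then realises; this does not change the substance of the argument.
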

\begin{proof}
Fix $k\geq 0$. The symmetric spectrum $R[\mathbb{Z}^k]$ is determined by the $\Gamma$-space associated to the abelian semi-ring $\coprod_{n\geq 0}\Hom(\mathbb{Z}^k,U(n))/U(n)$. The spectral theorem implies that there is a natural homeomorphism
\begin{equation} \label{eq:repzk}
\coprod_{n\geq 0}\Hom(\mathbb{Z}^k,U(n))/U(n)\cong SP^\infty(U(1)^k_+)\,,
\end{equation}
where $SP^\infty(U(1)^k_+)$ is the free abelian monoid generated by the space $U(1)^k$ with a disjoint basepoint as additive unit (see \emph{e.g.} \cite[Thm. 6.1]{ademstablesplittings} for a proof of (\ref{eq:repzk})). There is a multiplication map $U(1)^k_+\wedge U(1)^k_+\cong (U(1)^k\times U(1)^k)_+\rightarrow U(1)^k_+$ using the group structure of $U(1)^k$. This defines a multiplication map
\[
SP^\infty(U(1)^k_+)\wedge SP^\infty(U(1)^k_+)\rightarrow SP^\infty(U(1)^k_+)
\]
via $\left(\sum_i n_i x_i\right)\wedge \left(\sum_k m_k y_k\right)\mapsto \sum_{i,k} n_im_k\, x_i y_k$ making $SP^\infty(U(1)^k_+)$ into an abelian semi-ring in such a way that (\ref{eq:repzk}) is an isomorphism of semi-rings. By the Dold-Thom theorem $SP^\infty(U(1)^k_+)$ represents the reduced integral Pontrjagin ring of $U(1)^k_+$. The commutative ring spectrum associated to $SP^\infty(U(1)^k_+)$ is stably equivalent to $H\mathbb{Z}\wedge U(1)^k_+$ as an $H\mathbb{Z}$-algebra. The stable equivalence $|R[\mathbb{Z}^\ast]|\simeq H\mathbb{Z}\wedge BU(1)_+$ follows from this after geometric realisation.
\end{proof}

\begin{corollary} \label{cor:ecofibre}
There is a homotopy cofibre sequence of $ku$-modules
\[
\Sigma^2 E\stackrel{u\cdot }{\longrightarrow} E\stackrel{\lambda}{\longrightarrow} H\mathbb{Z}\wedge BU(1)_+\,,
\]
in which the first map is multiplication by the Bott element.
\end{corollary}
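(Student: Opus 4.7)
The plan is to obtain the cofibre sequence simplicialwise and then geometrically realise it. By the functoriality of Lawson's construction (\ref{eq:deformationktheory}) and the naturality of the map $\kdef(\pi)\to R[\pi]$, applying Theorem \ref{thm:bottcofibre} to each $\mathbb{Z}^k$ yields a simplicial diagram of homotopy cofibre sequences of commutative $ku$-algebras
\[
\Sigma^2 \kdef(\mathbb{Z}^k)\xrightarrow{u\cdot} \kdef(\mathbb{Z}^k)\longrightarrow R[\mathbb{Z}^k]\,.
\]
The cosimplicial structure on $\mathbb{Z}^\ast$ (Definition \ref{def:cosimplicialobject}) makes all three terms into simplicial objects in commutative $ku$-algebras, and the maps above into morphisms of simplicial $ku$-modules. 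Moreover the left-hand map is, after realisation, still `multiplication by the Bott element' because the $ku$-algebra structure maps are natural in $\pi$.

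The next step is to pass to geometric realisation. The realisation $|\Sigma^2\kdef(\mathbb{Z}^\ast)|$ is canonically identified with $\Sigma^2 E$ (smashing with $S^2$ commutes with realisation), and $|R[\mathbb{Z}^\ast]|\simeq H\mathbb{Z}\wedge BU(1)_+$ by Lemma \ref{lem:rzk}. Hence, provided the realisation of a levelwise homotopy cofibre sequence of $ku$-modules is again a homotopy cofibre sequence, the required sequence
\[
\Sigma^2 E \xrightarrow{u\cdot} E \xrightarrow{\lambda} H\mathbb{Z}\wedge BU(1)_+
\]
follows, with $\lambda$ defined as the realisation of the levelwise maps $\kdef(\mathbb{Z}^k)\to R[\mathbb{Z}^k]$.

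The one point that needs care is this preservation of cofibre sequences under $|-|$. The most practical route is to argue via the $\Gamma$-space model used in Lemma \ref{lem:loopsinfinity1}: at the level of associated $\Gamma$-spaces one has a levelwise fibration sequence (up to stable equivalence) whose zero-spaces on each finite pointed set $S$ are built from the proper simplicial spaces of Lemma \ref{lem:proper} (and the analogous statement for $R[\mathbb{Z}^\ast]$, where the simplicial space in question is a simplicial abelian monoid of symmetric products, again proper). Geometric realisation of proper simplicial spaces commutes with the relevant homotopy colimits, so on each level one obtains a homotopy cofibre sequence of spaces; by \cite[Cor. 4.1.6]{shipleysymmetricspectra} this assembles into a stable cofibre sequence of symmetric spectra.

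The main obstacle is therefore the homotopical bookkeeping in the last step: checking that the zig-zag of stable equivalences produced by rigidification and by Lemma \ref{lem:loopsinfinity1} is compatible with the Bott map and with the map to $|R[\mathbb{Z}^\ast]|$, so that the cofibre sequence really does descend to one of $ku$-modules rather than just of underlying spectra. Once the naturality and properness inputs are in place, the identification of the cofibre with $H\mathbb{Z}\wedge BU(1)_+$ is immediate from Lemma \ref{lem:rzk}, and the identification of the first map with multiplication by the Bott element is forced by naturality of the $ku$-module structure under the cosimplicial maps in $\mathbb{Z}^\ast$.
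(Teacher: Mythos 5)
Your proposal is correct and follows the same route as the paper, whose proof is exactly ``apply Theorem \ref{thm:bottcofibre} levelwise to $\mathbb{Z}^\ast$, use naturality of Lawson's construction and of $\kdef(-)\rightarrow R[-]$, geometrically realise, and identify the cofibre via Lemma \ref{lem:rzk}.'' The extra bookkeeping you supply (properness, compatibility of the zig-zags, realisation preserving cofibre sequences --- which in spectra also follows simply because the cofibre is a homotopy pushout and realisation commutes with homotopy pushouts) is exactly what the paper leaves implicit.
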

\begin{proof}
This follows from Theorem \ref{thm:bottcofibre} applied to $\mathbb{Z}^\ast$ and geometric realisation, in view of Lemma \ref{lem:rzk}.
\end{proof}

We can now prove the main result of this section:

\begin{theorem} \label{thm:spectrumbcomu}
There is a commutative $ku$-algebra spectrum $E$ which satisfies
\[
\Omega^\infty E\simeq \mathbb{Z}\times B_{\com}U\sslash U
\]
and a stable equivalence of commutative $ku$-algebras $E\simeq ku\wedge BU(1)_+$.
\end{theorem}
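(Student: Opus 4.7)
The $\Omega^\infty$-identification requires no new input: combining the zig-zag of stable equivalences $E\simeq |\mathcal{K}(\mathbb{Z}^\ast)|(\mathbb{S})$ from Lemma \ref{lem:loopsinfinity1} with the weak equivalence of Lemma \ref{lem:loopsinfinity2} yields $\Omega^\infty E\simeq \mathbb{Z}\times B_{\com}U\sslash U$ at once. Thus the content of the theorem is the equivalence of commutative $ku$-algebras $E\simeq ku\wedge BU(1)_+$, which I plan to establish by constructing a comparison map and running a Bott-cofibre-sequence argument.

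Since $U(1)$ is abelian, $B_{\com}U(1)$ coincides with $BU(1)$, and the inclusion $U(1)\hookrightarrow U$ induces a natural pointed map $\iota: BU(1)\to B_{\com}U$, hence into the component indexed by $1\in\mathbb{Z}$ of $\Omega^\infty E$. Tensor product of one-dimensional representations corresponds to multiplication in $U(1)$, so $\iota$ is an $H$-map for the usual $H$-space structure on $BU(1)$. Promoting this to an $E_\infty$-map, using the $E_\infty$-ring structure on $B_{\com}U$ established by Adem--G\'omez--Lind--Tillmann, and invoking the universal property of the $ku$-group ring, one obtains a map of commutative $ku$-algebras
\[
\phi: ku\wedge BU(1)_+ \longrightarrow E.
\]

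To prove $\phi$ is a stable equivalence, I would compare two Bott cofibre sequences. Smashing the standard sequence $\Sigma^2 ku\xrightarrow{u\cdot} ku\to H\mathbb{Z}$ with $BU(1)_+$ produces a cofibre sequence of $ku$-modules whose quotient is $H\mathbb{Z}\wedge BU(1)_+$, while Corollary \ref{cor:ecofibre} together with Lemma \ref{lem:rzk} provides the analogous sequence for $E$ with the \emph{same} quotient. Because $\phi$ is $ku$-linear it commutes with multiplication by the Bott element, so it induces a map between these two sequences. The induced map on quotients is a map of commutative $H\mathbb{Z}$-algebras sending the tautological class of $BU(1)$ to itself---under the natural transformation $\kdef(-)\to R[-]$ the map $\iota$ becomes precisely the inclusion of $BU(1)$ into $|R[\mathbb{Z}^\ast]|$---and hence is a stable equivalence. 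Both spectra are connective, so a routine induction on $n$ applying the five lemma to the long exact sequence
\[
\pi_{n+1}(M/u)\to \pi_{n-2}(M)\xrightarrow{u\cdot}\pi_n(M)\to \pi_n(M/u)\to \pi_{n-3}(M)
\]
shows that $\phi$ induces an isomorphism on all homotopy groups.

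The main obstacle is paragraph two: producing $\phi$ as a map of \emph{commutative} $ku$-algebras. The underlying pointed $H$-map $BU(1)\to\Omega^\infty E$ is transparent, but the $E_\infty$-multiplicative lift requires verifying that $U(1)\hookrightarrow U$ is coherent with the permutative structures (direct sum and tensor product) used to endow both sides with their commutative $ku$-algebra structures. Once this coherence is established---most naturally by working at the level of the symmetric multiplication maps on $\mathcal{K}(\mathbb{Z}^\ast)$ of Section \ref{sec:deformationktheory} and restricting to the one-dimensional $\Gamma$-subspace---the Bott-cofibre-sequence argument and the identification on quotients reduce to unpacking definitions.
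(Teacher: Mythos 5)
Your proposal is correct and follows essentially the same route as the paper: construct a map of ring spectra $\Sigma^\infty BU(1)_+\to E$ from the inclusion of line bundles, extend it over the free $ku$-algebra $ku\wedge BU(1)_+$, and deduce the equivalence by comparing the two Bott cofibre sequences with quotient $H\mathbb{Z}\wedge BU(1)_+$ via the five lemma. The one obstacle you flag is resolved in the paper not by restricting to a one-dimensional $\Gamma$-subspace but by \emph{adding} data to $\mathcal{K}(\mathbb{Z}^k)$ --- an unordered orthonormal frame diagonalising each representation --- which produces the free $E_\infty$-algebra on $U(1)^k$ together with a multiplicative forgetful map to $\mathcal{K}(\mathbb{Z}^k)$, and hence the desired ring map $\Sigma^\infty U(1)^k_+\to \kdef(\mathbb{Z}^k)$ before realising.
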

\begin{proof}
The first part follows directly from Lemmas \ref{lem:loopsinfinity1} and \ref{lem:loopsinfinity2}.

There is a map of commutative ring spectra $j: \Sigma^{\infty}BU(1)_+\rightarrow E$ which is induced by the canonical map $BU(1)\rightarrow B_{\com}U$. Recall from Section \ref{sec:deformationktheory} that if $S\in \Gamma^\op$, then a point in the space $\mathcal{K}(\mathbb{Z}^k)(S)$ is an $S$-indexed tuple $(W_a,\rho_a)_{a\in S}$ of finite dimensional mutually orthogonal inner product spaces $W_a\subset \mathbb{C}^\infty$ with representations $\rho_a: \mathbb{Z}^k\rightarrow U(W_a)$ on them. Now let $\Gamma_{U(1)^k}$ be the $\Gamma$-space which is obtained from $\mathcal{K}(\mathbb{Z}^k)$ by specifying in addition the data of an unordered orthonormal frame $\{w_{a,1},\dots,w_{a,n_a}\}\subset W_a$, where $n_a=\dim\,W_a$, for each $a\in S$ so that the representation $\rho_a$ is diagonal with respect to this frame. The $\Gamma$-space $\Gamma_{U(1)^k}$ has as underlying $H$-space
\[
\Gamma_{U(1)^k}(\mathbf{1}^+)=\coprod_{n\geq 0} V_n\times_{\Sigma_n} (U(1)^k)^n\,,
\]
the free $E_{\infty}$-algebra on the space $U(1)^k$. Moreover, it has multiplication maps induced from tensor product and thus leads to the group ring spectrum $\Gamma_{U(1)^k}(\mathbb{S})=\Sigma^{\infty}U(1)^k_+$. There is now an obvious forgetful map of $\Gamma$-spaces $\Gamma_{U(1)^k}\rightarrow \mathcal{K}(\mathbb{Z}^k)$ yielding a map of commutative ring spectra $\Sigma^\infty U(1)^k_+\rightarrow \kdef (\mathbb{Z}^k)$. The map $j: \Sigma^\infty BU(1)_+\rightarrow E$ is then induced on geometric realisations.

The composite map $\Sigma^{\infty}BU(1)_+\stackrel{j}{\longrightarrow} E\stackrel{\lambda}{\longrightarrow}H\mathbb{Z}\wedge BU(1)_+$ is easily seen to be the natural one, representing the stable Hurewicz map for $BU(1)$. Since $E$ is a $ku$-algebra, $j$ extends over the free $ku$-algebra $ku\wedge BU(1)_+$. Thus we obtain a sequence
\[
ku\wedge BU(1)_+\stackrel{j'}{\longrightarrow} E\stackrel{\lambda}{\longrightarrow} H\mathbb{Z}\wedge BU(1)_+\,,
\]
whose composite is the smash product of the linearization map $ku\rightarrow H\mathbb{Z}$ with $BU(1)_+$. If we combine this with Corollary \ref{cor:ecofibre}, we obtain a map of homotopy cofibre sequences
\[
\xymatrix{
\Sigma^2 ku\wedge BU(1)_+ \ar[r]\ar[d]^-{\Sigma^2j'} & ku\wedge BU(1)_+ \ar[r] \ar[d]^-{j'}	& H\mathbb{Z}\wedge BU(1)_+ \ar@{=}[d] \\
\Sigma^2E \ar[r] 	& E \ar[r]^-{\lambda} &	H\mathbb{Z}\wedge BU(1)_+
}
\]
where the top cofibering is obtained from the Bott periodicity sequence by smashing with $BU(1)_+$. Inductively, using the five lemma, we see that $j'$ induces an isomorphism of homotopy groups, hence is a stable equivalence.
\end{proof}

\begin{remark}\label{rem:representingspace}
It appears that either the space $B_{\com}U$ or $\mathbb{Z}\times B_{\com}U\sslash U$ is the natural output of a multiplicative infinite loop space machine, but not $\mathbb{Z}\times B_{\com}U$. We therefore prefer to think of commutative $K$-theory $\tilde{K}^\ast_{\com}$ as being represented by the space $B_{\com}U$, even though this does not quite agree with the definitions in \cite{ademnilpotentktheory} (where $\mathbb{Z}\times B_{\com}U$ is taken to represent the unreduced $K$-theory).
\end{remark}

\subsection{The homotopy groups of $B_{\com}U$} \label{sec:homotopy}

The structure of the $K$-homology ring $ku_\ast(BU(1))$ is well known. If we think of $BU$ as the second term in an $\Omega$-spectrum for $ku$, then the canonical map $BU(1)\rightarrow BU$ determines a class $x\in ku^2(BU(1))$. Let $y_n\in ku_{2n}(BU(1))$ be dual to $x^n\in ku^{2n}(BU(1))$. Let $\mathbb{Z}[u]\cong \pi_\ast(ku)$ be the coefficient ring for $ku$-theory. It follows from the Atiyah-Hirzebruch spectral sequence that $ku_\ast(BU(1))$ is a free $\mathbb{Z}[u]$-module on the generators $1$ and $y_n$ for $n\geq 1$ (cf. \cite{adamsstablehomotopy}).

Multiplicatively the $y_n$ satisfy certain relations which are similar to the ones in a divided polynomial algebra, but `twisted' by the Bott element. From \cite[Thm. 3.4]{wilsonhopfring} we can get the following compact description: Consider the formal power series
\[
y(t):=1+\sum_{n\geq 1} y_n t^n\in ku_\ast(BU(1))[[t]]\, .
\]
Then the relations in $ku_\ast(BU(1))$ are equivalent to the identity of power series
\begin{equation} \label{eq:ys}
y(s) y(t)= y(s+t+ust)\, .
\end{equation}
Note that ` $s+t+ust$ ' is the multiplicative formal group law for $ku$-theory. For example, (\ref{eq:ys}) yields the relations

\begin{equation} \label{eq:relation}
y_1y_{n}=(n+1)y_{n+1}+nuy_n
\end{equation}
for all $n\geq 1$, but these are not the only ones (over $\mathbb{Q}$ (\ref{eq:relation}) generate all relations).

By Theorem \ref{thm:spectrumbcomu} $ku_\ast(BU(1))$ describes the homotopy ring of $\mathbb{Z}\times B_{\com}U\sslash U$. We now have the standard homotopy fibre sequence
\begin{equation} \label{eq:fibresequencebcomu}
B_{\com}U\longrightarrow \mathbb{Z}\times B_{\com}U\sslash U\stackrel{p}{\longrightarrow} \mathbb{Z}\times BU\, .
\end{equation}
Since the basepoint of $B_{\com}U$ is fixed by the conjugation action of $U$, the map $p$ in (\ref{eq:fibresequencebcomu}) has a section $\mathbb{Z}\times BU\rightarrow \mathbb{Z}\times B_{\com}U\sslash U$, thus it is surjective on homotopy groups. Since the inclusion of the fibre is a map of additive and multiplicative $H$-spaces, this shows that $\pi_\ast(B_{\com}U)\subset \pi_\ast(\mathbb{Z}\times B_{\com}U\sslash U)$ is a subring. On homotopy groups $p$ corresponds to the map $ku_\ast(BU(1))\rightarrow ku_\ast(\textnormal{pt})\cong\mathbb{Z}[u]$ induced by $BU(1)\rightarrow \textnormal{pt}$, which sends all the $y_n$ to zero and is the identity on $u$. This proves:

\begin{corollary} \label{cor:homotopygroups}
The homotopy ring $\pi_\ast(B_{\com}U)$ is the ideal $(y_n\,|\, n\geq 1)\subset ku_\ast(BU(1))$. In particular, the homotopy groups of the space $B_{\com}U$ are as follows,
\begin{alignat*}{2}
\pi_{2n}(B_{\com}U)		&= \mathbb{Z}^n, \\
\pi_{2n+1}(B_{\com}U)	&= 0
\end{alignat*}
for all $n\geq 0$.
\end{corollary}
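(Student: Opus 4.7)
The plan is to identify $\pi_\ast(B_{\com}U)$ as a subring of $\pi_\ast(\mathbb{Z}\times B_{\com}U\sslash U)\cong ku_\ast(BU(1))$ via the fibre sequence (\ref{eq:fibresequencebcomu}), and then count $\mathbb{Z}$-summands in each degree.

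First, I would invoke Theorem \ref{thm:spectrumbcomu}, which provides an equivalence $\mathbb{Z}\times B_{\com}U\sslash U\simeq \Omega^\infty(ku\wedge BU(1)_+)$. Taking homotopy groups yields a ring isomorphism $\pi_\ast(\mathbb{Z}\times B_{\com}U\sslash U)\cong ku_\ast(BU(1))$, and the $\mathbb{Z}[u]$-basis $\{1\}\cup\{y_n\}_{n\geq 1}$ recalled in the paragraph preceding the corollary is available.

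Next, I would analyse the fibration $B_{\com}U\to \mathbb{Z}\times B_{\com}U\sslash U\xrightarrow{p}\mathbb{Z}\times BU$. Because the basepoint of $B_{\com}U$ is a $U$-fixed point, $p$ has a section, and since the inclusion of the fibre is a map of additive and multiplicative $H$-spaces, the induced map on homotopy fits into a short exact sequence of rings
\[
0\longrightarrow \pi_\ast(B_{\com}U)\longrightarrow ku_\ast(BU(1))\stackrel{p_\ast}{\longrightarrow} \pi_\ast(\mathbb{Z}\times BU)\longrightarrow 0
\]
in which $\pi_\ast(B_{\com}U)$ appears as an ideal. Under the identifications above, $p_\ast$ corresponds to the map $ku_\ast(BU(1))\to ku_\ast(\pt)\cong \mathbb{Z}[u]$ induced by the collapse $BU(1)\to \pt$, which annihilates every $y_n$ with $n\geq 1$ and restricts to the identity on $\mathbb{Z}[u]$. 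The kernel is therefore exactly the ideal $(y_n\,|\, n\geq 1)$, giving the first assertion.

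For the concrete groups, I would use that $ku_\ast(BU(1))$ is a free $\mathbb{Z}[u]$-module on $\{1\}\cup\{y_n\}_{n\geq 1}$, so a $\mathbb{Z}$-basis in degree $2n$ consists of $u^{n-k}y_k$ for $0\leq k\leq n$, and there is no element in odd degree. Removing the basis element $u^n$ (which lies in the image of the section) leaves $n$ free generators of $\pi_{2n}(B_{\com}U)$, namely $u^{n-k}y_k$ for $1\leq k\leq n$, while $\pi_{2n+1}(B_{\com}U)=0$. No step presents a real obstacle; the only thing to verify carefully is that the section of $p$ is compatible with the $H$-space structure so as to split the sequence as rings and so that the inclusion of the fibre is genuinely a subring inclusion.
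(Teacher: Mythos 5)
Your proposal is correct and follows essentially the same route as the paper: identify $\pi_\ast(\mathbb{Z}\times B_{\com}U\sslash U)$ with $ku_\ast(BU(1))$ via Theorem \ref{thm:spectrumbcomu}, use the section of $p$ coming from the $U$-fixed basepoint to split the fibre sequence (\ref{eq:fibresequencebcomu}) on homotopy, and recognise $\pi_\ast(B_{\com}U)$ as the kernel of the map induced by $BU(1)\rightarrow \pt$, i.e.\ the ideal $(y_n\,|\,n\geq 1)$. The rank count in degree $2n$ via the basis $u^{n-k}y_k$, $1\leq k\leq n$, is exactly right.
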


A similar description applies to the homotopy ring of $B_{\com}SU$, which we determine at the end of Section \ref{sec:type}.

\section{The homotopy types of $B_{\com}U$ and $B_{\com}SU$} \label{sec:type}

Recall that the space $E_{\com}G$ is defined to be the homotopy fibre of $i: B_{\com}G\rightarrow BG$. It is also the total space of the universal transitionally commutative $G$-bundle over $B_{\com}G$. The main result of the present section identifies the homotopy type of $E_{\com}G$ for $G=U$ and $SU$ (the two are equivalent, cf. Lemma \ref{lem:detfibrebcomu}).

In \cite[Thm. 4.2]{ademnilpotentktheory} it is proved that the map $i: B_{\com}U\rightarrow BU$ admits a section up to homotopy $s: BU\rightarrow B_{\com}U$, which is also an infinite loop map. As a consequence, the authors obtain a splitting of infinite loop spaces
\[
B_{\com}U\simeq BU\times E_{\com}U\, .
\]
This situation is quite different from the case of a compact Lie group $G$, where one usually gets a splitting of $B_{\com}G$ only after looping once (cf. \cite[Thm. 6.3]{ademcommutingelements}). And indeed, the construction of the section $s: BU\rightarrow B_{\com}U$ makes explicit use of the loop space structure on $B_{\com}U$. \bigskip

In order to understand the map $i: B_{\com}U\rightarrow BU$ on the spectrum level we use an auxiliary spectrum $F$. Recall from Definition \ref{def:cosimplicialobject} the cosimplicial group $F_\ast$. Define $F:=|\kdef(F_\ast)|$ and let $\iota: E\rightarrow F$ be the map induced by the morphism of cosimplicial groups $F_\ast\rightarrow \mathbb{Z}^\ast$. Then $\iota$ realises the map of spaces $i: B_{\com}U\rightarrow BU$ as a map of $ku$-algebras. Indeed, $\Omega^\infty F\simeq \mathbb{Z}\times BU\sslash U$ and the map $\Omega^\infty \iota: \Omega^\infty E\rightarrow \Omega^\infty F$ is the extension of $i$ over the homotopy orbit. There is a homotopy fibre sequence
\begin{equation} \label{eq:ffibrationsequence}
BU\rightarrow \mathbb{Z}\times BU\sslash U \rightarrow \mathbb{Z}\times BU\, ,
\end{equation}
which is split by a map (of $E_{\infty}$-ring spaces) $r: \mathbb{Z}\times BU\rightarrow \mathbb{Z}\times BU\sslash U$ using the $U$-fixed basepoint of $BU$. Let $x\in \pi_2(F)$ and $u\in \pi_2(F)$ be the classes determined by the Bott element of the fibre respectively the base of this homotopy fibre sequence.

\begin{lemma} \label{lem:homotopyringf}
The homotopy ring of $F$ is $\pi_\ast(F)\cong \mathbb{Z}[u,x]/(x^2-ux)$.
\end{lemma}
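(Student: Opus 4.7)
The plan is first to determine $\pi_*(F)$ additively as a $\mathbb{Z}[u]$-module using the split fibration (\ref{eq:ffibrationsequence}), and then to pin down the single non-trivial multiplicative relation by comparison with $E$.

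Since the section $r$ is an $E_\infty$-ring map, it agrees up to homotopy with the unit of the $ku$-algebra structure on $F$, and $p$ is a $ku$-algebra retraction of this unit. Hence $F \simeq ku \vee \bar F$ in $ku$-modules, where $\bar F := \mathrm{fib}(p)$ satisfies $\Omega^\infty \bar F \simeq BU$. A choice of generator $x \in \pi_2(\bar F) \cong \mathbb{Z}$ classifies a $ku$-module map $\Sigma^2 ku \to \bar F$; both sides are connective $ku$-modules with $\pi_{2n} = \mathbb{Z}$ in each positive even degree, and the requisite isomorphism in higher degrees follows from the fact that multiplication by $u$ is an isomorphism on $\pi_{2n}(\bar F)$ for $n \geq 1$ (a consequence of Bott periodicity applied to $\Omega^\infty \bar F \simeq BU$). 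Hence $\bar F \simeq \Sigma^2 ku$, and $\pi_*(F) \cong \mathbb{Z}[u]\langle 1, x\rangle$ is free as a $\mathbb{Z}[u]$-module with $|x|=2$.

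For the multiplicative structure, $p$ is a ring map with $p(x)=0$, so $p(x^2)=0$ and therefore $x^2 \in \ker(p_*) = \mathbb{Z}[u]x$. In degree $4$ this leaves $x^2 = c\cdot ux$ for some $c \in \mathbb{Z}$, and it remains to identify $c=1$.

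To do this I would apply the $ku$-algebra map $\iota \colon E \to F$ realising $i \colon B_{\com}U \to BU$. Because $p_F\iota = p_E$, the map $\iota$ respects the splittings into unit summand and augmentation ideal, and its restriction to the fibre $B_{\com}U \to BU$ sends $y_1 \in \pi_2(B_{\com}U)$ to the Bott class $u \in \pi_2(BU)$ by naturality of the Hurewicz map on $BU(1) \to BU$. Since $i$ is a map of tensor-product ring (without unit) spaces, applying $i_*$ to the Pontrjagin relation $y_1^2 = 2y_2 + uy_1$ from (\ref{eq:relation}) yields $u^2 = 2i_*(y_2) + u^2$, forcing $i_*(y_2)=0$ and hence $\iota_*(y_2)=0$ in $\pi_4(F)$. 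Applying $\iota_*$ to the same relation now gives $x^2 = 2\iota_*(y_2) + ux = ux$, so $c=1$. I expect the only delicate point to be the careful verification that $\iota_*(y_1)=x$ with compatible orientations; once this is granted, the ring structure drops out automatically.
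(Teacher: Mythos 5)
Your route is genuinely different from the paper's. The paper proves Lemma \ref{lem:homotopyringf} by exhibiting the shear isomorphism $U(n)\rtimes U(n)\cong U(n)\times U(n)$, which induces a homeomorphism $\coprod_n BU(n)\sslash U(n)\cong \coprod_n BU(n)\times BU(n)$ compatible with both $\oplus$ and $\otimes$; after group completion this gives an $H$-ring equivalence $\sigma:\mathbb{Z}\times BU\sslash U\simeq \mathbb{Z}\times BU\times BU$ with known homotopy ring $\mathbb{Z}[b,c]/(bc)$, and the relation drops out of $\sigma_\ast(u)=b+c$, $\sigma_\ast(x)=c$. You instead settle the additive structure from the split fibration and import the relation from $\pi_\ast(E)=ku_\ast(BU(1))$ along $\iota$. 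Your multiplicative step is attractive and not circular: granted that $\{u^n,u^{n-1}x\}$ is a $\mathbb{Z}$-basis of $\pi_{2n}(F)$ and that $i_\ast$ is compatible with the classical tensor and $ku$-module structures on $\pi_\ast(BU)$, the computation $i_\ast(y_2)=0$ followed by $x^2=2\iota_\ast(y_2)+ux$ does yield the relation, modulo the sign check on $\iota_\ast(y_1)$ that you flag.

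The gap is in the additive step. You assert that multiplication by $u$ is an isomorphism on $\pi_{2n}(\bar F)$ for $n\geq 1$ ``by Bott periodicity applied to $\Omega^\infty\bar F\simeq BU$''. But the $u$-action on $\bar F$ is defined through the tensor-product $H$-space structure on $\mathbb{Z}\times BU\sslash U$ and the unit section $r$, and it is not determined by the homotopy groups of $\bar F$: a connective $ku$-module with $\pi_{2n}=\mathbb{Z}$ for $n\geq 1$ can have $u$ acting by zero (e.g.\ $\bigvee_{n\geq 1}\Sigma^{2n}H\mathbb{Z}$). What your argument needs is that the $ku$-module structure on the fibre summand agrees with the classical one on $bu\simeq\Sigma^2 ku$, equivalently that $u^{n-1}x$ generates the fibre summand of $\pi_{2n}(F)$; without this, $\pi_\ast(F)$ need not be generated by $u$ and $x$ at all, so the presentation in the lemma does not follow. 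This identification is precisely what the paper's shear map supplies, and it is also used silently in your step $i_\ast(uy_1)=u\cdot u=u^2$, which equates the $ku$-module action inherited from $F$ with the classical Bott map on $\pi_\ast(BU)$. A repair within your framework would be to invoke Lawson's $ku$-module splitting $\kdef(F_k)\simeq ku\vee\bigvee^k\Sigma ku$ and the simplicial filtration of $F=|\kdef(F_\ast)|$ (cf.\ Remark \ref{rem:splittingmap}) to obtain $F\simeq ku\vee\Sigma^2 ku$ as $ku$-modules before running the multiplicative comparison with $E$.
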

\begin{proof}
The following fact will be useful. Let $G$ be a group acting on itself via conjugation. This action induces a simplicial action of $G$ on the nerve $N_\ast G$, hence on $BG$, whose homotopy orbit $BG\sslash G$ can be regarded as the classifying space of the semi-direct product $G\rtimes G$. The shear map $G\rtimes G\rightarrow G\times G$ sending $(g,h)\mapsto (gh,h)$ gives an isomorphism with the direct product, thus showing that $BG\sslash G \cong BG\times BG$.

In the special case $G=U(n)$ the shear isomorphism commutes with direct sum and tensor product. Thus, the induced homeomorphism
\[
\coprod_{n\geq 0} BU(n)\sslash U(n)\xrightarrow{\cong} \coprod_{n\geq 0} BU(n)\times BU(n)
\]
is compatible with the additive and multiplicative $H$-spaces structures on both the domain and target, where on the target space the $H$-space structures are the ones defined factorwise. After group-completion we obtain a homotopy equivalence $\sigma: \mathbb{Z}\times BU\sslash U\xrightarrow{\sim} \mathbb{Z}\times BU\times BU$, which is in addition a map of $H$-rings ($=$ a ring in the homotopy category). The homotopy ring of $\mathbb{Z}\times BU\times BU$ is isomorphic to $\mathbb{Z}[b,c]/(bc)$, where $b$ and $c$ are the Bott classes corresponding to the two factors of $BU$.

Let $l: BU\rightarrow \mathbb{Z}\times BU\sslash U$ be the inclusion of the fibre, i.e. the first arrow in (\ref{eq:ffibrationsequence}). Restricting to the components of the basepoints we obtain a homotopy commutative diagram
\[
\xymatrix{
BU\times BU \ar[r]^-{r\times l} \ar[d]^-{\Delta \times id}	& \,BU\sslash U\times BU\sslash U \ar[r]^-{\oplus}	& BU\sslash U \ar[d]^-{\sigma}_{\simeq} \\
BU\times BU\times BU \ar[rr]^-{id\times \oplus}			&										& BU\times BU\, .
}
\]
We write $\sigma_\ast$ for the map induced by $\sigma$ on homotopy groups. The diagram shows that $\sigma_\ast (u)=b+c$ and $\sigma_\ast (x)=c$. Therefore, $\sigma_\ast(ux)=\sigma_\ast(x^2)=c^2$, so $ux=x^2$. We also see that as a ring $\pi_\ast(F)$ is generated by $u$ and $x$. From counting ranks we see that $ux=x^2$ is the only relation and therefore $\pi_\ast(F)\cong \mathbb{Z}[u,x]/(ux-x^2)$ as claimed.
\end{proof}

\begin{remark} \label{rem:splittingmap}
If $X_\ast$ is a simplicial space, we write $F_k|X_\ast|\subset |X_\ast|$ for the simplicial $k$-skeleton of $|X_\ast|$, that is, for the image of $\coprod_{n\leq k} X_n\times \Delta^n$ in the geometric realisation. We can apply the same construction to the terms in a simplicial spectrum. In \cite{lawsonbottcofibre} Lawson determines the unitary deformation $K$-theory of free groups. He finds that $\kdef(F_k)\simeq ku\vee \bigvee^{k} \Sigma ku$ as $ku$-modules. From this one can deduce that the inclusion of the simplicial $1$-skeleton $F_1|\kdef(F_\ast)|\rightarrow |\kdef(F_\ast)|=F$ is an equivalence of $ku$-modules (e.g. by comparing spectral sequences). As $F_1|\kdef(F_\ast)|=F_1|\kdef(\mathbb{Z}^\ast)|$ this inclusion factors through the commutative $K$-theory spectrum $E$. The resulting $ku$-module map $F\simeq F_1|\kdef(F_\ast)|\rightarrow E$ can be seen as a spectrum level version of the splitting map $s: BU\rightarrow B_{\com}U$ constructed in \cite{ademnilpotentktheory}.
\end{remark}

\begin{lemma} \label{lem:kumodulesplitting}
There is a diagram of $ku$-modules
\[
\xymatrix{
\myvee_{n\geq 0} \Sigma^{2n}ku \ar[r]^-{f}_-{\simeq} \ar[d] & E \ar[d]^-{\iota} \\
ku\vee \Sigma^2 ku \ar[r]^-{f'}_-{\simeq} & F 
}
\]
commuting up to homotopy, where the two horizontal maps are stable equivalences and the left vertical map collapses the summands $\Sigma^{2n}ku$ for $n\geq 2$.
\end{lemma}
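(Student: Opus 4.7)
The plan is to construct each horizontal equivalence by specifying it on homotopy generators and then verify commutativity one wedge summand at a time on homotopy groups, exploiting the fact that $\iota\colon E\to F$ is a map of ring spectra.

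First, to produce $f$, I use Theorem \ref{thm:spectrumbcomu} to identify $E$ with the $ku$-algebra $ku\wedge BU(1)_+$ and recall that $ku_\ast(BU(1))$ is $\pi_\ast(ku)$-free on the classes $1,y_1,y_2,\dots$. Extending each $y_n\in\pi_{2n}(E)$ to a $ku$-module map $\Sigma^{2n}ku\to E$ and wedging yields a $ku$-module map $f$; by construction it induces an isomorphism on $\pi_\ast$ and hence is a stable equivalence. For $f'$, Lemma \ref{lem:homotopyringf} shows that $\pi_\ast(F)\cong \mathbb{Z}[u,x]/(x^2-ux)$ is $\pi_\ast(ku)$-free on $1$ and $x$, and the same procedure applied to the unit of $F$ and the class $x\in\pi_2(F)$ produces $f'$.

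For commutativity of the square of $ku$-module maps, it suffices to compare the two composites on each wedge summand $\Sigma^{2n}ku$, which reduces to comparing classes in $\pi_{2n}(F)$. The $n=0$ case is immediate from unitality. For $n=1$, naturality of the homotopy fibre sequence $B_\com U\to \mathbb{Z}\times B_\com U\sslash U\to\mathbb{Z}\times BU$ over the map $i\colon B_\com U\to BU$ shows that $\iota_\ast(y_1)$ equals the Bott class of the fibre $BU\subset \mathbb{Z}\times BU\sslash U$; with appropriate choices of sign in the construction of $f$ and $f'$ this agrees with $x$, matching $f'$ composed with the projection.

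The main step is the vanishing $\iota_\ast(y_n)=0$ for $n\geq 2$. Here I will use that $\iota$ is a map of commutative ring spectra, so that $\iota_\ast\colon\pi_\ast(E)\to\pi_\ast(F)$ is a ring homomorphism sending $u\mapsto u$ and $y_1\mapsto x$. The multiplicative formal group relation $y(s)y(t)=y(s+t+ust)$ in $ku_\ast(BU(1))$ yields, by extracting the coefficient of $st^n$, the identity $y_1y_n=(n+1)y_{n+1}+nu\,y_n$, which recursively gives
\[
n!\,y_n=y_1(y_1-u)(y_1-2u)\cdots(y_1-(n-1)u)\quad\text{in }\pi_\ast(E).
\]
Applying $\iota_\ast$ produces $n!\,\iota_\ast(y_n)=x(x-u)(x-2u)\cdots(x-(n-1)u)$, and the first two factors already vanish since $x(x-u)=x^2-ux=0$ by Lemma \ref{lem:homotopyringf}. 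Because $\pi_\ast(F)$ is torsion-free we conclude $\iota_\ast(y_n)=0$, which is precisely what is needed for the square to commute. The main (minor) obstacle is bookkeeping the sign conventions identifying the various Bott classes, but these are absorbed into the initial choices of $f$ and $f'$.
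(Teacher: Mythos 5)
Your proposal is correct and follows essentially the same route as the paper: build $f$ and $f'$ from the $\pi_\ast(ku)$-module generators $y_n$ and $x$ via the $ku$-module structure, reduce commutativity to a $\pi_\ast$-computation, and kill $\iota_\ast(y_n)$ for $n\geq 2$ using the relation $x^2=ux$ together with the multiplicative relations among the $y_n$. The only cosmetic difference is that you use the closed product formula $n!\,y_n=\prod_{j=0}^{n-1}(y_1-ju)$ and torsion-freeness of $\pi_\ast(F)$ in one step, whereas the paper inducts directly on $y_1y_n=(n+1)y_{n+1}+nuy_n$; these are equivalent.
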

\begin{proof}
The splitting of $ku\wedge BU(1)_+$ as a wedge of suspensions of $ku$ is well known. Recall that $\pi_\ast(E)\cong ku_\ast(BU(1))$ is a free $\mathbb{Z}[u]$-module on generators $y_0:=1$ and $y_n$ for $n\geq 1$. Using the structure of $E$ as a $ku$-module we can define for every $n\geq 0$ a $ku$-module map $f_n: \Sigma^{2n}ku\rightarrow E$ to be the composite
\[
S^{2n}\wedge ku\xrightarrow{y_n\wedge id} E\wedge ku\xrightarrow{\textnormal{mult.}} E\, .
\]
The coproduct over the $f_n$ defines the top horizontal map in the diagram,
\[
\bigvee_{n\geq 0}f_n: \bigvee_{n\geq 0} \Sigma^{2n}ku\rightarrow E\,,
\]
which is a $\pi_\ast$-isomorphism by construction, thus a stable equivalence. The equivalence $f': ku\vee \Sigma^2 ku\rightarrow F$ is defined similarly, using the homotopy class $x\in \pi_2(F)$ and the structure of the homotopy ring described in Lemma \ref{lem:homotopyringf}. Let us write $\iota_\ast$ for the map induced by $\iota$ on homotopy groups. To see that the diagram commutes up to homotopy, we first note that $\iota_\ast (y_1)=x$, by definition of $y_1$, $x$ and $\iota$. Using Lemma \ref{lem:homotopyringf} and inducting over the multiplicative relations in (\ref{eq:relation}) we see that $\iota_\ast (y_n)=0$ for all $n\geq 2$. This shows that $\iota\circ  f$ factors, up to homotopy, through $f'$ in the way displayed in the diagram.
\end{proof}

Let $BU\langle 2n\rangle\rightarrow BU$ denote the $2n-1$ connected cover of $BU$, that is, $\pi_{\ast}(BU\langle 2n\rangle)=0$ for $\ast\leq 2n-1$ and the map to $BU$ induces an isomorphism $\pi_\ast(BU\langle 2n\rangle)\cong \pi_\ast(BU)$ for $\ast \geq 2n$. The main result of this section is:
\begin{theorem} \label{thm:splitting}
There are splittings of $E_{\infty}-\mathbb{Z}\times BU$-modules $B_{\com}U \simeq BU \times E_{\com}U$, and $B_{\com}SU \simeq BSU \times E_{\com}U$, and
\[
E_{\com}U \simeq \prod_{n\geq 2} BU\langle 2n\rangle\, .
\]
\end{theorem}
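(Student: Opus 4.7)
The plan is to transport the wedge splitting of $E$ provided by Lemma \ref{lem:kumodulesplitting} down to a product splitting of $B_{\com}U$, using that $\iota : E \to F$ models $i : B_{\com}U \to BU$ after applying $\Omega^\infty$ and restricting to the fibres of (\ref{eq:fibresequencebcomu}) and (\ref{eq:ffibrationsequence}).

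The first step is to observe that the collapse map $\bigvee_{n\geq 0} \Sigma^{2n}ku \to ku \vee \Sigma^2 ku$ appearing in Lemma \ref{lem:kumodulesplitting} is a split projection of $ku$-modules: its section includes the first two wedge summands and its homotopy fibre is $\bigvee_{n\geq 2} \Sigma^{2n}ku$. Transporting along the two horizontal stable equivalences of the lemma, the map $\iota$ itself acquires a $ku$-module section and splits off a copy of $F$ with complement $\bigvee_{n\geq 2} \Sigma^{2n}ku$. This wedge of connective spectra has only finitely many non-trivial summands in each homotopy degree, so the canonical map into the product $\prod_{n\geq 2}\Sigma^{2n}ku$ is a stable equivalence.

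The second step is to apply $\Omega^\infty$ to all of this. Using the standard identification $\Omega^\infty \Sigma^{2n}ku \simeq BU\langle 2n\rangle$ for $n\geq 1$, Theorem \ref{thm:spectrumbcomu}, and the analogous identification $\Omega^\infty F \simeq \mathbb{Z}\times BU\sslash U$, the splitting becomes
\[
\mathbb{Z}\times B_{\com}U\sslash U \simeq (\mathbb{Z}\times BU\sslash U)\times \prod_{n\geq 2} BU\langle 2n\rangle
\]
as $E_\infty$-$\mathbb{Z}\times BU$-modules, and under this $\Omega^\infty\iota$ corresponds to the projection onto the first factor. Passing to the horizontal homotopy fibres of (\ref{eq:fibresequencebcomu}) and (\ref{eq:ffibrationsequence}), and using that $\Omega^\infty\iota$ is the identity on the common base $\mathbb{Z}\times BU$, yields a splitting $B_{\com}U \simeq BU\times \prod_{n\geq 2} BU\langle 2n\rangle$ of $E_\infty$-$\mathbb{Z}\times BU$-modules, with $i$ corresponding to the projection onto $BU$. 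Hence $E_{\com}U \simeq \prod_{n\geq 2} BU\langle 2n\rangle$ and $B_{\com}U \simeq BU \times E_{\com}U$.

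For the splitting of $B_{\com}SU$, I would appeal to Lemma \ref{lem:detfibrebcomu} (identifying $B_{\com}SU$ as the homotopy fibre of a determinant map $B_{\com}U\to BU(1)$) together with the observation that this determinant factors through the $BU$ summand of $B_{\com}U\simeq BU\times E_{\com}U$, so that its homotopy fibre inherits a product decomposition $BSU\times E_{\com}U$. The main subtlety is tracking the $E_\infty$-$\mathbb{Z}\times BU$-module structure through the spectrum-level splitting, but this should be automatic because the entire argument takes place in the category of $ku$-module spectra, and $\Omega^\infty$ canonically converts $ku$-modules into $E_\infty$-$\mathbb{Z}\times BU$-modules.
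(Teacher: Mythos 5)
Your argument is correct, and it reaches the theorem by a route that differs from the paper's in two respects. For the splitting of $B_{\com}U$ both proofs rest on Lemma \ref{lem:kumodulesplitting}, but the paper passes directly to the homotopy cofibre $b_{\com}u=\textnormal{hcofib}(ku\to E)$ of the unit, which already satisfies $\Omega^\infty b_{\com}u\simeq B_{\com}U$, and reads off $b_{\com}u\simeq \bigvee_{n\geq 1}\Sigma^{2n}ku$; you instead split $\iota\colon E\to F$ and then descend to the fibres of (\ref{eq:fibresequencebcomu}) and (\ref{eq:ffibrationsequence}). Your version needs one extra (easy) check that you leave implicit: not only $\Omega^\infty\iota$ but the entire splitting equivalence --- the section $F\to E$ and the inclusion of the complement $\bigvee_{n\geq 2}\Sigma^{2n}ku\to E$ --- must be compatible up to homotopy with the maps to $ku$ realizing the projections to the common base $\mathbb{Z}\times BU$. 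This does hold, because every spectrum in sight is a wedge of even suspensions of $ku$ and every map is a $ku$-module map, hence determined by its effect on homotopy groups, where the compatibility is visible (the projection to $ku$ kills all the $y_n$); but it should be said. On the other hand you are more careful than the paper about replacing the wedge $\bigvee_{n\geq 2}\Sigma^{2n}ku$ by the product before applying $\Omega^\infty$. For $B_{\com}SU$ the paper constructs an auxiliary $ku$-algebra $E'$ with $\Omega^\infty E'\simeq \mathbb{Z}\times B_{\com}SU\sslash U$ and splits it using Corollary \ref{cor:homotopybcomsu}, whereas you take the homotopy fibre of the determinant map of Lemma \ref{lem:detfibrebcomu}, which factors through $i$ and hence, under the splitting already obtained, is a map off the $BU$ factor only; this is more economical and avoids $E'$ altogether. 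The trade-off is that the theorem asserts a splitting of $E_{\infty}$--$\mathbb{Z}\times BU$-modules, and the determinant fibration a priori only gives a splitting of infinite loop spaces; the spectrum-level route through $E'$ is the cleaner way to see the module structure on the $B_{\com}SU$ factorization.
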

\begin{proof}
Let $\eta: ku\rightarrow E$ be the $ku$-algebra unit. The $ku$-module spectrum $b_{\com}u:=\textnormal{hcofib}(\eta)$ satisfies $\Omega^\infty b_{\com}u\simeq B_{\com}U$. Moreover, by  Lemma \ref{lem:kumodulesplitting} there is a splitting of $ku$-modules $b_{\com}u\simeq \bigvee_{n\geq 1}\Sigma^{2n}ku$ so that the map $i: B_{\com}U\rightarrow BU$ corresponds to the projection onto the $\Sigma^2ku$-summand. The infinite loop space associated to $\Sigma^{2n}ku$ is the $2n$-th term in an $\Omega$-spectrum for $ku$, i.e. the $2n-1$-connected cover $BU\langle 2n\rangle$ of $BU$. Thus, applying $\Omega^\infty$ we obtain a splitting of $E_\infty-\mathbb{Z}\times BU$-modules
\[
B_{\com}U\simeq \prod_{n\geq 1} BU\langle 2n\rangle\,,
\]
so that $E_{\com}U= \textnormal{hofib}(i)$ corresponds to the factors with $n\geq 2$.

The splitting for $B_{\com}SU$ is obtained in a similar way. Modifying the constructions in Section \ref{sec:spectrum} we obtain a $ku$-algebra $E'$ with $\Omega^\infty E'\simeq \mathbb{Z}\times B_{\com}SU\sslash U$ and a morphism of $ku$-algebras $E'\rightarrow E$ induced by the inclusion $B_{\com}SU\subset B_{\com}U$. Corollary \ref{cor:homotopybcomsu} below shows that $\pi_\ast(E')$ is a $\mathbb{Z}[u]$-submodule of $\pi_\ast(E)$ generated by $1$, $uy_1$ and $y_n$ for $n\geq 2$. 
\end{proof}

Lemma \ref{lem:detfibrebcomu} and Corollary \ref{cor:homotopybcomsu} complete the proof of Theorem \ref{thm:splitting}. Let $j: SU(n)\rightarrow U(n)$ be the inclusion and let $\textnormal{det}: U(n)\rightarrow U(1)$ be the determinant map.

\begin{lemma} \label{lem:detfibrebcomu}
For every $1\leq n\leq \infty$ there is a homotopy fibre sequence
\[
B_{\com}SU(n) \xrightarrow{B_{\com}(j)}	B_{\com}U(n) \xrightarrow{B_{\com}(\det)}  BU(1)\, .
\]
For $n=\infty$, $B_{\com}(j)$ is a map of non-unital $E_{\infty}$-ring spaces and $B_{\com}(\det): B_{\com}U_{\oplus}\rightarrow BU(1)$ is an infinite loop map.
\end{lemma}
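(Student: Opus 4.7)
The plan is to produce the fibration by showing that the simplicial map
\[
\det_\ast: \Hom(\mathbb{Z}^\ast, U(n)) \longrightarrow \Hom(\mathbb{Z}^\ast, U(1)) = U(1)^\ast
\]
is a simplicial fibre bundle with fibre $\Hom(\mathbb{Z}^\ast, SU(n))$, and then to pass to geometric realisations.

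To see that $\det_k$ is a locally trivial fibre bundle for each $k$, consider the action of $U(1)^k$ on $\Hom(\mathbb{Z}^k, U(n))$ by componentwise scalar multiplication, $(z_i) \cdot (A_i) := (z_i A_i)$. Since scalars are central in $U(n)$, this action preserves the commutativity relations, hence is well-defined. The map $\det_k$ is equivariant with respect to the transitive $n$-th power action $(z_i) \cdot (w_i) := (z_i^n w_i)$ of $U(1)^k$ on itself. Since the $n$-th power map $U(1)^k \to U(1)^k$ is a regular covering, any local section $\sigma: V \to U(1)^k$ over $V \subset U(1)^k$ yields a homeomorphism $V \times \Hom(\mathbb{Z}^k, SU(n)) \xrightarrow{\cong} \det_k^{-1}(V)$, $(y, A) \mapsto \sigma(y) \cdot A$. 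This trivialises $\det_k$ locally, with fibre $\det_k^{-1}(1, \dots, 1) = \Hom(\mathbb{Z}^k, SU(n))$. Because $\det$ is a group homomorphism these bundle structures are compatible with the face and degeneracy maps of the bar construction, and each simplicial operator restricts along the fibres to the corresponding bar-construction map for $SU(n)$.

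Passing to realisations requires properness of the simplicial spaces, which for $\Hom(\mathbb{Z}^\ast, SU(n))$ follows from the same NDR-pair argument as in Lemma \ref{lem:proper}. A standard realisation theorem for simplicial fibre bundles (equivalently May's criterion for quasi-fibrations) then yields that $B_{\com}(\det): B_{\com}U(n) \to BU(1)$ is a fibration with fibre $B_{\com}SU(n)$. For $n = \infty$ one takes the colimit over the compatible fibrations for finite $n$, using the inclusions $U(n) \hookrightarrow U(n+1)$, $A \mapsto A \oplus 1$ (which preserve the determinant) and the fact, already used in the proof of Lemma \ref{lem:loopsinfinity2}, that these are levelwise cofibrations. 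For the structural claims: $j: SU \to U$ is a group homomorphism compatible with both block sum and tensor product of matrices, so $B_{\com}(j)$ is a map of non-unital $E_{\infty}$-ring spaces; and the identity $\det(A \oplus B) = \det(A) \det(B)$ exhibits $\det$ as a morphism of permutative categories from $(U, \oplus)$ to $(U(1), \cdot)$, so $B_{\com}(\det)$ is an infinite loop map with respect to the $\oplus$-structure. The main technical obstacle will be the careful invocation of the realisation theorem, ensuring that the levelwise bundle structure is sufficiently compatible with simplicial operations to produce a genuine fibration on realisations.
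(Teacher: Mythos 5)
Your levelwise argument is a genuinely different, and in fact more self-contained, route to the key input: the explicit local trivialisation of $\det_k$ via the central $U(1)^k$-action and a local section of the $n$-th power covering $p_n^{\times k}$ is correct (the inverse $B\mapsto (\det_k(B),\sigma(\det_k B)^{-1}\cdot B)$ lands in $V\times \Hom(\mathbb{Z}^k,SU(n))$ precisely because scalars are central and $\det(\sigma(y)_i^{-1}B_i)=y_i^{-1}y_i=1$). The paper instead obtains the levelwise homotopy fibre sequences by applying $\Hom(\mathbb{Z}^k,-)$ to the $n$-sheeted cover $U(1)\times SU(n)\to U(n)$, invoking Goldman's theorem on components of covering maps together with the path-connectivity of $\Hom(\mathbb{Z}^k,U(n))$, and comparing with the $k$-fold product of $p_n$ via a homotopy cartesian square. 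Your version avoids Goldman's theorem entirely and upgrades ``homotopy fibre sequence'' to ``fibre bundle'' levelwise, which is a small gain.

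The gap is in the realisation step. There is no ``standard realisation theorem'' asserting that a map of simplicial spaces which is levelwise a fibre bundle, with fibres forming a simplicial subspace, realises to a (quasi-)fibration: the mapping torus of a self-map $f\colon F\to F$ that is not a weak equivalence is levelwise a trivial $F$-bundle over the simplicial circle, yet its realisation over $S^1$ is not a quasifibration. The structure squares over the face maps are \emph{not} strict pullbacks here (given $(A_2,\dots,A_k)$ and $w_1\in U(1)$ there are many choices of $A_1$ with $\det A_1=w_1$ commuting with the $A_i$), so one cannot treat $\det_\ast$ as a simplicial fibre bundle in the strict sense, and some homotopy-cartesianness or $\pi_\ast$-Kan hypothesis is unavoidable. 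The correct tool is the Bousfield--Friedlander theorem \cite[Thm.~B.4]{bousfieldfriedlanderhomotopytheory}, whose hypotheses are met because all three simplicial spaces are levelwise path-connected --- for $\Hom(\mathbb{Z}^k,U(n))$ and $\Hom(\mathbb{Z}^k,SU(n))$ this is the nontrivial result \cite[Cor.~2.4]{ademhomomorphisms}, which your write-up never invokes (properness/goodness, which you do address, is only needed to pass between topological realisations and diagonals of bisimplicial sets). With that connectivity input added, your levelwise bundles feed into Bousfield--Friedlander exactly as the paper's levelwise homotopy fibre sequences do, and the colimit argument for $n=\infty$ and the structural claims about $\oplus$ and $\otimes$ are fine as you state them.
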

\begin{proof}
Fix an integer $n\geq 1$. We consider the $n$-sheeted covering map
\begin{alignat*}{2}
q: U(1)\times SU(n)	&	\longrightarrow U(n)	\\
(z,A)				&	\longmapsto	zA
\end{alignat*}
with covering group the cyclic group $\mathbb{Z}/n\mathbb{Z}$. Let $k\geq 1$. Applying the functor $\Hom(\mathbb{Z}^k,-)$ gives a sequence of maps
\[
\Hom(\mathbb{Z}^k,\mathbb{Z}/n\mathbb{Z})\longrightarrow \Hom(\mathbb{Z}^k,U(1)\times SU(n))\xrightarrow{\Hom(\mathbb{Z}^k,q)} \Hom(\mathbb{Z}^k,U(n))\, .
\]
Since $\Hom(\mathbb{Z}^k,U(n))$ is path-connected for all $k$ and $n$, by \cite[Cor. 2.4]{ademhomomorphisms}, a result of Goldman \cite[Lem. 2.2]{goldmancomponents} shows that $\Hom(\mathbb{Z}^k,q)$ is a covering map with covering group $\Hom(\mathbb{Z}^k,\mathbb{Z}/n\mathbb{Z})\cong (\mathbb{Z}/n\mathbb{Z})^k$. The resulting covering sequence fits into a commutative diagram
\[
\xymatrix{
(\mathbb{Z}/n\mathbb{Z})^k \ar[r] \ar@{=}[d]	&	\,U(1)^k\times \Hom(\mathbb{Z}^k,SU(n)) \ar[rr]^-{\Hom(\mathbb{Z}^k,q)} \ar[d]^-{\textnormal{pr}_1}	&&	\Hom(\mathbb{Z}^k,U(n)) \ar[d]^-{\Hom(\mathbb{Z}^k,\det)}	\\
(\mathbb{Z}/n\mathbb{Z})^k \ar[r]			&	U(1)^k \ar[rr]^-{p_n^{\times k}}				&&	U(1)^k\, .
}
\]
The bottom row is the $k$-fold cartesian product of the covering map $p_n: U(1)\rightarrow U(1)$ sending $z\mapsto z^n$. Since both rows are homotopy fibre sequences, the right hand square is homotopy cartesian. Taking vertical homotopy fibres yields a homotopy fibre sequence
\[
\Hom(\mathbb{Z}^k,SU(n)) \xrightarrow{\Hom(\mathbb{Z}^k,j)}	\Hom(\mathbb{Z}^k,U(n)) \xrightarrow{\Hom(\mathbb{Z}^k,\det)}	U(1)^k\, .
\]
Each term in the sequence forms a levelwise path-connected simplicial space when $k$ varies. The theorem of Bousfield-Friedlander \cite[Thm. B.4]{bousfieldfriedlanderhomotopytheory} implies now that
\begin{equation} \label{eq:homotopyfibredet}
B_{\com}SU(n) \xrightarrow{B_{\com}(j)}	B_{\com}U(n) \xrightarrow{B_{\com}(\det)}	BU(1)
\end{equation}
is a homotopy fibre sequence. Here we used the fact that the simplicial spaces are good, so that we can replace them up to weak equivalence by the diagonals of bisimplicial sets. This proves the lemma for $1\leq n< \infty$. The maps in (\ref{eq:homotopyfibredet}) are natural with respect to the standard maps $B_{\com}SU(n)\rightarrow B_{\com}SU(n+1)$ and $B_{\com}U(n)\rightarrow B_{\com}U(n+1)$. Passing to homotopy colimits as $n\to \infty$ proves the case $n=\infty$. The second statement is clear, because the inclusion maps $SU(n)\hookrightarrow U(n)$ are compatible with block sum and tensor product.
\end{proof}

The determinant map $B_{\com}(\det): B_{\com}U\rightarrow BU(1)$ factors through $BU$ and induces an isomorphism on second homotopy groups. From the long exact sequence of homotopy groups for $B_{\com}SU\rightarrow B_{\com}U\rightarrow BU(1)$ we see that $\pi_\ast(B_{\com}SU)\subset \pi_\ast(B_{\com}U)$ is a subring, only missing the class $y_1\in \pi_2(B_{\com}U)$.

\begin{corollary} \label{cor:homotopybcomsu}
The homotopy ring $\pi_\ast(B_{\com}SU)$ is the subring of $\pi_\ast(B_{\com}U)$ given by the ideal $(uy_1,\,y_n\,|\, n\geq 2)\subset ku_\ast(BU(1))$.
\end{corollary}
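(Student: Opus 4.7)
The plan is to deduce the result directly from the long exact sequence in homotopy for the fibration from Lemma \ref{lem:detfibrebcomu}, and then match the resulting subgroups of $\pi_\ast(B_{\com}U)$ degreewise with the claimed ideal.

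First, I would apply the long exact sequence of homotopy groups to
\[
B_{\com}SU \xrightarrow{B_{\com}(j)} B_{\com}U \xrightarrow{B_{\com}(\det)} BU(1)\, .
\]
Since $\pi_n(BU(1))$ vanishes except in degree two, where it is $\mathbb{Z}$, the map $B_{\com}(j)$ induces an isomorphism on $\pi_n$ for every $n\neq 1,2$. To handle degrees $1$ and $2$, I would invoke the assertion (recalled just before the statement) that $B_{\com}(\det)$ factors as $BU(1)\circ i$ through the determinant $BU\to BU(1)$ and is an isomorphism on $\pi_2$; combined with $\pi_1(B_{\com}U)=0$ this forces $\pi_1(B_{\com}SU)=\pi_2(B_{\com}SU)=0$ and shows that $B_{\com}(j)_\ast$ sends $\pi_\ast(B_{\com}SU)$ isomorphically onto the subgroup of $\pi_\ast(B_{\com}U)$ obtained by killing the class $y_1\in \pi_2(B_{\com}U)$.

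Next I would identify that subgroup with the ideal $J=(uy_1, y_n\mid n\geq 2)\subset ku_\ast(BU(1))$ inside $\pi_\ast(B_{\com}U)=(y_n\mid n\geq 1)$. Since $\pi_\ast(B_{\com}U)$ is a free $\mathbb{Z}[u]$-module with basis $\{y_n\}_{n\geq 1}$ and $|y_n|=2n$, it suffices to compare in each even degree $2k$. For $k=1$ the generator $y_1$ is excluded from $J$ (all generators of $J$ lie in degrees $\geq 4$), matching $\pi_2(B_{\com}SU)=0$. For $k\geq 2$ the elements $u^{k-2}(uy_1)=u^{k-1}y_1$ and $u^{k-n}y_n$ for $2\leq n\leq k$ all lie in $J$, and together they span $\pi_{2k}(B_{\com}U)=\mathbb{Z}^k$; hence $J$ coincides with $\pi_{2k}(B_{\com}U)$ in these degrees, matching the isomorphism $\pi_{2k}(B_{\com}SU)\cong \pi_{2k}(B_{\com}U)$ from the long exact sequence.

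Finally I would observe that $B_{\com}(j)_\ast$ is multiplicative (the inclusion $j$ preserves block sum and tensor product, so $B_{\com}(j)$ is a map of non-unital $E_\infty$-ring spaces), so the subgroup is a subring of $\pi_\ast(B_{\com}U)$; this is automatic on the right-hand side since $J$ is an ideal of $ku_\ast(BU(1))$. The main substantive point is the verification that killing $y_1$ in the $\mathbb{Z}[u]$-module structure of $(y_n\mid n\geq 1)$ yields exactly the ideal $J$, and the only place where care is needed is in degree two, where one must use that the $\pi_2$-generator $y_1$ itself is really detected by the determinant. With these ingredients in hand, the identification $\pi_\ast(B_{\com}SU)=(uy_1, y_n\mid n\geq 2)$ is immediate.
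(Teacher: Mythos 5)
Your argument is correct and is essentially the paper's own: the corollary is deduced from the long exact homotopy sequence of the determinant fibration of Lemma \ref{lem:detfibrebcomu}, using that $B_{\com}(\det)$ factors through $BU$ and is an isomorphism on $\pi_2$, so that $\pi_\ast(B_{\com}SU)$ is the subring of $\pi_\ast(B_{\com}U)$ obtained by deleting $y_1$ in degree two. Your explicit degreewise verification that this subgroup coincides with the ideal $(uy_1,\,y_n\mid n\geq 2)$ is a detail the paper leaves implicit, and it is carried out correctly.
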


We finish this section with a remark about cohomology operations.

\paragraph{Operations.} There are a couple of interesting cohomology operations which can be defined as self-maps of $B_{\com}U$. Clearly, we have complex conjugation $\psi^{-1}: B_{\com}U\rightarrow B_{\com}U$, but there are also involutions $\phi^t$ induced by transposition $A\mapsto A^t$, and $\phi^{-1}$ induced by taking inverses $A\mapsto A^{-1}$, $A\in U$. All these operations are compatible with direct sum and tensor product, so they induce stable multiplicative cohomology operations on $\tilde{K}_{\com}^\ast(-)$. It is not difficult to see that they extend to operations on $E$. Under the equivalence $E\simeq ku\wedge BU(1)_+$ they can be identified with more familiar operations: Complex conjugation $\psi^{-1}$ corresponds to complex conjugation on both $ku$ and $BU(1)_+$, transposition $\phi^t$ corresponds to complex conjugation only on $ku$ and leaves $BU(1)_+$ fixed, and $\phi^{-1}$ is complex conjugation on $BU(1)_+$ and the identity on $ku$. This way one could easily compute their effect on the homotopy groups of $B_{\com}U$, but we will not use them in this paper.

More generally than $\phi^{-1}$ one has an operation $\phi^k: B_{\com}U\rightarrow B_{\com}U$ for every integer $k\in \mathbb{Z}$ induced from the $k$-th power map $A\mapsto A^k$ in the unitary groups. In terms of vector bundles it represents the operation which takes a commuting cocycle to its $k$-th power. Recall that $B_{\com}U=|\Hom(\mathbb{Z}^\ast,U)|$. The map $\phi^k$ is then the map induced by the endomorphism of $\mathbb{Z}^\ast$ which is multiplication by $k$ in every simplicial degree. It is obvious then that $\phi^k$ extends to a $ku$-algebra operation on $E$. Under the equivalence with $ku\wedge BU(1)_+$ it is the map induced by the $k$-th power map in $BU(1)_+$.

These `wanna-be' Adams operations can be quite useful. In \cite{gritschacherthesis} they were used to determine the homotopy ring $\pi_\ast(E)$ from the cofibre sequence in Corollary \ref{cor:ecofibre}. They also show that the canonical map $i: B_{\com}U\rightarrow BU$ cannot have a section which is an infinite loop map and a map of multiplicative $H$-spaces at the same time: Namely, first note that $\phi^k(y_1)=k y_1$ on $\pi_2(B_{\com}U)$. Now suppose that $s: BU\rightarrow B_{\com}U$ is a multiplicative section, then the effect of the composite $i\circ \phi^k\circ s$ on $\pi_{2n}(BU)$ is multiplication by $k^n$. By a result of Clarke \cite{clarkeselfmaps} this uniquely identifies the $H$-map $i\circ \phi^k\circ s$ as the $k$-th Adams operation $\psi^k: BU\rightarrow BU$. However, $i\circ \phi^k\circ s$ is a composite of infinite loop maps, while $\psi^k$ is unstable unless $k\in \{0,\pm 1\}$. We arrive at a contradiction. A similar argument, using the fact that Adams operations are determined by their effect on line bundles, shows that if $j: BU(1)\rightarrow B_{\com}U$ and $l: BU(1)\rightarrow BU$ are the canonical maps, then $s\circ l \not\simeq j$.

\section{On the rational (co-)homology of $B_{\com}U$} \label{sec:cohomologybcomu}

In this section we investigate the relationship between the homotopy groups of $B_{\com}U$ and certain rational characteristic classes introduced in \cite{ademcommutativity}.

\subsection{The conjugation map and cohomology of $B_{\com}U(n)$} \label{sec:cohomologybcomg}

The rational cohomology algebra of $B_{\com}U$ was first described by Adem and G\'omez \cite{ademcommutativity}. Their computation relies on a general description of the rational cohomology of $B_{\com}G$ in terms of Weyl group invariants, which we now recall from \cite[\S 6]{ademcommutingelements} and \cite[\S 7]{ademcommutativity}.

Let $G$ be a compact connected Lie group. Unlike the cases $G=U(n)$ or $G=SU(n)$ the space of homomorphisms $\Hom(\mathbb{Z}^k,G)$ need not be path-connected for general $G$, see \emph{e.g.} \cite[p. 493]{ademcommutativity}. Thus let $\Hom(\mathbb{Z}^k,G)_{\mathds{1}}\subset \Hom(\mathbb{Z}^k,G)$ denote the path-component of the trivial representation. The path-components $\Hom(\mathbb{Z}^k,G)_{\mathds{1}}$ assemble into a simplicial space too, whose geometric realisation is denoted
\[
B_{\com}G_{\mathds{1}}:=|k\mapsto \Hom(\mathbb{Z}^k,G)_{\mathds{1}}|\, .
\]

Fix a maximal torus $T\leqslant G$ and let $W=N(T)/T$ be its Weyl group. An element $w=nT\in W$  acts on $gT\in G/T$ by $w\cdot gT=gn^{-1}T$ and on $t\in T$ by $w\cdot t=ntn^{-1}$. The action on $T$ induces an action on $\Hom(\mathbb{Z}^k,T)$ for every $k\geq 0$. There is a map
\begin{alignat*}{2}
G/T\times_W \Hom(\mathbb{Z}^k,T)	&\quad& \longrightarrow	&\quad \Hom(\mathbb{Z}^k,G)_{\mathds{1}}	\\
[gT,(t_1,\dots,t_k)]				&\quad& \longmapsto	&\quad (gt_1g^{-1},\dots,gt_kg^{-1})
\end{alignat*}
which generalises the well known conjugation map $G/T\times T\rightarrow G$ in Lie group theory. For varying $k\geq 0$ both domain and target of this map assemble into a simplicial space (we may regard $G/T$ as a constant simplicial space), and it is easy to check that there is a welldefined map induced on the geometric realisation
\begin{equation} \label{eq:conjugationmap1}
\varphi: G/T\times_W BT\rightarrow B_{\com}G_{\mathds{1}}\, .
\end{equation}
Now \cite[Thm. 6.1]{ademcommutingelements} asserts that the `conjugation map' $\varphi$ induces an isomorphism of $\mathbb{Q}$-algebras
\begin{equation} \label{eq:bcomgcohomology}
H^\ast(B_{\com}G_{\mathds{1}},\mathbb{Q})\cong \left(H^\ast(G/T,\mathbb{Q})\otimes H^\ast(BT,\mathbb{Q})\right)^W\, .
\end{equation}
Equivalently, using the Borel presentation for the algebra $H^\ast(G/T,\mathbb{Q})$, the isomorphism (\ref{eq:bcomgcohomology}) can be written as
\begin{equation} \label{eq:bcomgcohomology2}
H^\ast(B_{\com}G_{\mathds{1}},\mathbb{Q})\cong \left(H^\ast(BT,\mathbb{Q})\otimes H^\ast(BT,\mathbb{Q})\right)^W/J\,,
\end{equation}
where $J$ is the ideal in $\left(H^\ast(BT,\mathbb{Q})\otimes H^\ast(BT,\mathbb{Q})\right)^W$ generated by the image of the positive degree elements of $H^\ast(BG,\mathbb{Q})\cong H^\ast(BT,\mathbb{Q})^W$ under the inclusion given by $x\mapsto x\otimes 1$. This is \cite[Prop. 7.1]{ademcommutativity}. \bigskip

{\noindent \emph{Notation.} For the rest of Section \ref{sec:cohomologybcomu} $H^\ast$ and $H_\ast$ will always mean (co-)homology with rational coefficients, unless stated otherwise.
}

\paragraph{The case $G=U(n)$} \cite[\S 8.1]{ademcommutativity}. Let $T(n)\leqslant U(n)$ denote the maximal torus consisting of diagonal matrices with entries in $U(1)$. The associated Weyl group is the symmetric group $\Sigma_n$ on $n$ letters which acts on $T(n)$ by permuting the diagonal. The classifying space $BT(n)$ is a product of $n$ copies of $BU(1)$, so its rational cohomology ring is a polynomial algebra $\mathbb{Q}[y_1,\dots,y_n]$ on a set of $n$ independent variables of degree two. In the following we shall frequently write $\mathbb{Q}[\mathbf{y}]$ for a polynomial algebra, where $\mathbf{y}:=\{y_1,\dots,y_n\}$. Thus, by (\ref{eq:bcomgcohomology2})
\[
H^\ast(B_{\com}U(n))\cong (\mathbb{Q}[\mathbf{x}]\otimes \mathbb{Q}[\mathbf{y}])^{\Sigma_n}/J_n\,,
\]
where $\mathbf{x}=\{x_1,\dots,x_n\}$ is another set of variables of degree two, the symmetric group acts diagonally on the tensor product by permuting the variables in $\mathbf{x}$ and $\mathbf{y}$, and $J_n$ is the ideal generated by all $e_j(\mathbf{x})\otimes 1$ for $j\geq 1$, where
\[
e_j(\mathbf{x})=\sum_{1\leq i_1<\dots < i_j \leq n} x_{i_1}\cdots x_{i_j}
\]
denotes the $j$-th elementary symmetric polynomial in $\mathbf{x}$. For $(a,b)\in \mathbb{N}^2$ let $z_{a,b,n}\in \mathbb{Q}[\mathbf{x}]\otimes \mathbb{Q}[\mathbf{y}]$ denote the $\Sigma_n$-invariant polynomial of degree $2(a+b)$ given by
\begin{equation} \label{eq:generatorzabn}
z_{a,b,n}:=x_1^ay_1^b+\dots+x_n^ay_n^b\, .
\end{equation}
Elements of this form in the polynomial ring are called \emph{multisymmetric functions}, see \emph{e.g.} \cite{vaccarinomultisymmetric}. It is shown in \cite[\S 8.1]{ademcommutativity} that $H^\ast(B_{\com}U(n))$ is generated as an algebra by the $z_{a,b,n}$ for $(a,b)\in \mathbb{N}^2$, $b\geq 1$, $2(a+b)\leq 2n$. The authors also verify that $H^\ast(B_{\com}U)\cong \lim_n H^\ast(B_{\com}U(n))$ and that there are no relations in $H^\ast(B_{\com}U)$, that is, it is generated as a \emph{polynomial} algebra by the sequences
\begin{equation} \label{eq:generatorzab}
z_{a,b}:=(z_{a,b,n})_{n\geq 1}\,,
\end{equation}
so that
\begin{equation} \label{eq:cohomologybcomu}
H^\ast(B_{\com}U)\cong \mathbb{Q}[z_{a,b}\,|\, (a,b)\in \mathbb{N}^2,\, b\geq 1]\, .
\end{equation}

\subsection{The rational Hopf ring for $B_{\com}U$}

We want to establish a precise relationship between the $K$-homology classes $y_n\in ku_{\ast}(BU(1))$ and the rational cohomology classes $z_{a,b}\in H^{\ast}(B_{\com}U)$. In order to do so, we shall exploit the full multiplicative structure of the space $B_{\com}U$ induced by direct sum and tensor product. In fact, we wish to work with the larger space $\mathbb{Z}\times B_{\com}U\sslash U$ instead of $B_{\com}U$, so we begin by analysing its homology and cohomology.

\paragraph{The cohomology ring.} We first compare $H^\ast(B_{\com}U)$ to $H^\ast(B_{\com}U\sslash U)$. Let
\[
\bar{\varphi}: U(n)/T(n)\times BT(n)\rightarrow B_{\com}U(n)
\]
be the conjugation map (\ref{eq:conjugationmap1}) before passing to $W$-orbits. It is a $U(n)$-equivariant map if we let $U(n)$ act on $U(n)/T(n)$ by left-translation, on $B_{\com}U(n)$ by conjugation, and trivially on $BT(n)$. Let
\[
\bar{\Phi}: \left(U(n)/T(n)\times BT(n)\right)\sslash U(n)\rightarrow B_{\com}U(n)\sslash U(n)
\]
be the induced map on homotopy orbits. Note that
\[
\begin{split}
\left(U(n)/T(n)\times BT(n)\right)\sslash U(n)&=\left(U(n)/T(n)\right)\sslash U(n)\times BT(n)\\&\simeq BT(n)\times BT(n)\, .
\end{split}
\]
Recall that there is a homotopy fibre sequence of the form
\[
U(n)/T(n)\stackrel{l}{\longrightarrow} BT(n)\stackrel{j}{\longrightarrow} BU(n)\, ,
\]
where $j$ is the map induced by the inclusion $T(n)\hookrightarrow U(n)$. We obtain a commutative diagram
\begin{equation} \label{dgr:htpyorbit}{\footnotesize
\xymatrix{
U(n)/T(n)\times BT(n) \ar@{=}[r] \ar[d]^-{l\times id}			& U(n)/T(n)\times BT(n) \ar[r]^-{\bar{\varphi}} \ar[d]						& B_{\com}U(n) \ar[d]				\\
BT(n)\times BT(n) \ar[r]^-{\simeq} \ar[d]^-{\textnormal{pr}_1}	& \;\left(U(n)/T(n)\times BT(n)\right)\sslash U(n) \ar[r]^-{\bar{\Phi}} \ar[d]	& B_{\com}U(n) \sslash U(n)\ar[d]	\\
BT(n)\ar[r]^-{j}										& BU(n) \ar@{=}[r]												& BU(n)\, ,
}}
\end{equation}
\bigskip

{\noindent where the unlabelled vertical maps are the inclusion of the fibre respectively the projection onto the base in the Borel construction. The Weyl group acts in the usual way on $U(n)/T(n)$ and $BT(n)$ and these actions are compatible with all maps in the diagram. Specifically, the action on $BT(n)\times BT(n)$ is the diagonal one.}

Now consider the right half of the diagram. After passing to $W$-orbits $\bar{\varphi}$ and $\bar{\Phi}$ describe a map of homotopy fibre sequences. The induced map between the fibres is the conjugation map $\varphi$ (\ref{eq:conjugationmap1}) which is a rational cohomology isomorphism by (\ref{eq:bcomgcohomology}). By comparison of spectral sequences, we see that the map $\bar{\Phi}$ induces a rational cohomology isomorphism on $W$-orbits. The horizontal composite in the centre of the diagram yields
\[
H^\ast(B_{\com}U(n)\sslash U(n))\cong (H^\ast(BT(n))\otimes H^\ast(BT(n)))^W\, .
\]
The left-most column in the diagram shows that the right-most column takes the following form on cohomology,
\begin{equation} \label{eq:cohomologyborel}
\begin{split}
H^\ast(BT(n))^W\xrightarrow{x\mapsto x\otimes 1} &\,(H^\ast(BT(n))\otimes H^\ast(BT(n)))^W\,\cdots\\&\quad\cdots \;\xrightarrow{\textnormal{proj.}} (H^\ast(BT(n))\otimes H^\ast(BT(n)))^W/J\, .
\end{split}
\end{equation}
We can pass to the colimit as $n\to \infty$ in exactly the same way as in \cite[pp. 526]{ademcommutativity} to conclude that
\begin{equation} \label{eq:cohomologybcomuu}
H^\ast(B_{\com}U\sslash U)\cong \mathbb{Q}[z_{a,b}\,|\, (a,b)\in \mathbb{N}^2,\, (a,b)\neq (0,0)]\, .
\end{equation}
The classes $z_{a,b}$ acquire the same definition as in (\ref{eq:generatorzab}). The only difference between (\ref{eq:cohomologybcomuu}) and (\ref{eq:cohomologybcomu}) is that the restriction $b\geq 1$ present in (\ref{eq:cohomologybcomu}) is now omitted, which accounts for the additional classes coming from the base space $BU$ in the Borel construction $B_{\com}U\sslash U$.

Recall that $H^\ast(BU)\cong \mathbb{Q}[\textnormal{ch}_a\,|\,a\geq 1]$ where $\textnormal{ch}_a\in H^{2a}(BU)$ is the $a$-th component of the Chern character. Define the class $z_a:=a!\,\textnormal{ch}_a$. We see from (\ref{eq:cohomologyborel}) that the sequence of maps $B_{\com}U\rightarrow B_{\com}U\sslash U \rightarrow BU$ translates into
\begin{equation} \label{eq:mapcoh1}
\begin{split}
\mathbb{Q}[z_a\,|\, a\geq 1] &\longrightarrow \mathbb{Q}[z_{a,b}\,|\, (a,b)\in \mathbb{N}^2,\, (a,b)\neq (0,0)]\,\cdots \\&\hspace{90pt} \cdots\, \longrightarrow \mathbb{Q}[z_{a,b}\,|\, (a,b)\in \mathbb{N}^2,\, b\geq 1]\,,
\end{split}
\end{equation}
where the two arrows send $z_a\mapsto z_{a,0}$ and $z_{a,b} \mapsto \begin{cases} z_{a,b}\,, & b>0  \\ 0\,, & b=0\, , \end{cases}$ respectively.

\paragraph{The Pontrjagin ring.} Now we take into account the $H$-space structure
\[
B_{\com}U\sslash U \times B_{\com}U\sslash U\stackrel{\oplus}{\longrightarrow} B_{\com}U\sslash U
\]
induced by block sum. It makes the polynomial algebra $H^\ast(B_{\com}U\sslash U)$ into a Hopf algebra.

\begin{lemma}
The polynomial generators $z_{a,b}$ in (\ref{eq:cohomologybcomuu}) are primitive.
\end{lemma}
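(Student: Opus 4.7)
The plan is to trace the block sum map through the conjugation model used to compute $H^\ast(B_{\com}U\sslash U)$, and to observe that under this identification $z_{a,b}$ corresponds to a ``multisymmetric power sum'' whose primitivity is immediate from the fact that Chern roots concatenate under direct sum.

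First I would verify that the diagram (\ref{dgr:htpyorbit}) is natural with respect to block sum. At the unitary level, block sum gives a map $U(n)\times U(m)\to U(n+m)$ carrying the subgroup $T(n)\times T(m)$ into $T(n+m)$, hence an induced map $BT(n)\times BT(m)\to BT(n+m)$; on the quotients it gives $U(n)/T(n)\times U(m)/T(m)\to U(n+m)/T(n+m)$. A direct computation with the formula $(gT,(t_1,\dots,t_k))\mapsto (gt_1g^{-1},\dots,gt_kg^{-1})$ shows that
\[
\bar{\varphi}_{n+m}\bigl((g_1\oplus g_2)T,\,(t_1\oplus s_1,\dots,t_k\oplus s_k)\bigr)=\bar{\varphi}_n(g_1T,\mathbf{t})\oplus \bar{\varphi}_m(g_2T,\mathbf{s}),
\]
so $\bar{\varphi}$ intertwines block sum, and likewise for $\bar{\Phi}$ after passing to homotopy orbits.

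Second, I would read off the induced map on rational cohomology. Writing $\mathbf{x}^{(n)}=\{x_1,\dots,x_n\}$ and $\mathbf{y}^{(n)}=\{y_1,\dots,y_n\}$ for the two sets of degree-two generators of $H^\ast(BT(n))\otimes H^\ast(BT(n))$, the block sum $BT(n)\times BT(m)\to BT(n+m)$ pulls back $\mathbf{x}^{(n+m)}$ to $\mathbf{x}^{(n)}\sqcup \mathbf{x}^{(m)}$ and $\mathbf{y}^{(n+m)}$ to $\mathbf{y}^{(n)}\sqcup \mathbf{y}^{(m)}$. Consequently
\[
\oplus^\ast(z_{a,b,n+m})=\sum_{i=1}^{n+m}x_i^a y_i^b\;\longmapsto\; \sum_{i=1}^{n}x_i^ay_i^b\;+\;\sum_{j=1}^{m}x_j^a y_j^b=z_{a,b,n}\otimes 1+1\otimes z_{a,b,m}.
\]
Passing to the colimit $n,m\to \infty$ and using the identification (\ref{eq:cohomologybcomuu}), this gives $\Delta(z_{a,b})=z_{a,b}\otimes 1+1\otimes z_{a,b}$, i.e. $z_{a,b}$ is primitive.

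The only genuine content is the first step, and I expect it to be routine once one writes down the block-sum compatibility of $\bar{\varphi}$ at the simplicial level; the rest is the standard observation that power sums in a set of variables that concatenates under a coproduct are automatically primitive.
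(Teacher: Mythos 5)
Your argument is correct and is precisely the one the paper has in mind: the paper's proof is a one-line remark that the lemma "is proved in exactly the same way as one proves the primitivity of the components of the Chern character," and your write-up simply fills in that standard power-sum argument, transported through the conjugation model via the block-sum compatibility of $\bar{\varphi}$. No gap; the only detail worth recording explicitly is that the $H$-space structure on the colimit is induced by the finite-level block sums $B_{\com}U(n)\sslash U(n)\times B_{\com}U(m)\sslash U(m)\rightarrow B_{\com}U(n+m)\sslash U(n+m)$, which is what lets the finite-level computation pass to the limit.
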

\begin{proof}
This is proved in exactly the same way as one proves the primitivity of the components of the Chern character $\textnormal{ch}_a\in H^{\ast}(BU)$, see \emph{e.g.} \cite{kochmanadams}.
\end{proof}

Next we consider the dual Hopf algebra $H_\ast(B_{\com}U\sslash U)$. Define $\zeta_{a,b}$ to be the dual in degree $2(a+b)$ of the indecomposable element $z_{a,b}$. Then the collection $\{\zeta_{a,b}\,|\,(a,b)\in \mathbb{N}^2,\, (a,b)\neq (0,0)\}$ spans the subspace of primitives in $H_\ast(B_{\com}U\sslash U)$.

\begin{lemma} \label{lem:homologyhopfalgebra}
The Hopf algebra $H_\ast(B_{\com}U\sslash U)$ is polynomial on the primitive generators $\zeta_{a,b}$ of degree $2(a+b)$ for $(a,b)\in \mathbb{N}^2-\{(0,0)\}$.
\end{lemma}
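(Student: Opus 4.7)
The plan is to invoke the structure theorem for connected bicommutative graded Hopf algebras over $\mathbb{Q}$. The block-sum $H$-space multiplication on $B_{\com}U\sslash U$ (coming from its identification with a component of $\Omega^\infty E$) is homotopy commutative, so $H_\ast(B_{\com}U\sslash U)$ is a graded connected bicommutative Hopf algebra over $\mathbb{Q}$. Moreover $H^\ast(B_{\com}U\sslash U)$ is a polynomial algebra with exactly $n+1$ generators $z_{a,b}$ ($a+b=n$) in degree $2n$, hence of finite type, so $H_\ast$ is its graded Hopf-algebraic dual.

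Next I would apply the Milnor--Moore theorem: over a field of characteristic zero, any connected bicommutative Hopf algebra is the free graded-commutative algebra on its submodule of primitives $PH_\ast$. Since the cohomology, and hence the homology, of $B_{\com}U\sslash U$ is concentrated in even degrees (all generators $z_{a,b}$ live in degree $2(a+b)$), ``free graded-commutative'' means just polynomial. Thus
\[
H_\ast(B_{\com}U\sslash U)\cong \mathbb{Q}[PH_\ast]\,.
\]

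It remains to identify $PH_\ast$. Under the perfect pairing between $H^\ast$ and $H_\ast$, the subspace of primitives in $H_\ast$ is naturally dual to the module of indecomposables $QH^\ast=H^{\ast,+}/(H^{\ast,+})^2$. From the polynomial presentation (\ref{eq:cohomologybcomuu}) the indecomposables $QH^\ast$ have basis $\{z_{a,b}\mid (a,b)\in \mathbb{N}^2-\{(0,0)\}\}$, so by definition of $\zeta_{a,b}$ as the dual in degree $2(a+b)$ of the indecomposable $z_{a,b}$, the classes $\zeta_{a,b}$ form a basis of $PH_\ast$. Combining with the previous step gives the claim.

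There is no serious obstacle: the key inputs (primitivity of the $z_{a,b}$, finite type, Milnor--Moore, duality of primitives and indecomposables) are all either already in hand or standard. The only point deserving care is the finite-type hypothesis, which is needed both for $H_\ast$ to be the honest dual Hopf algebra of $H^\ast$ and for the duality between primitives and indecomposables, and this is immediate from the explicit count of generators in each degree.
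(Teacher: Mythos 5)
Your proposal is correct and follows essentially the same route as the paper, which simply cites Milnor--Moore for the fact that the rational Pontrjagin ring of a path-connected homotopy commutative $H$-space is free graded-commutative on its primitives, with the identification of the primitives as the duals of the indecomposables $z_{a,b}$ handled in the paragraph preceding the lemma. Your additional care about the finite-type hypothesis and the even grading is sound but does not change the argument.
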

\begin{proof}
Since $B_{\com}U\sslash U$ is path-connected and a homotopy commutative $H$-space, its rational Pontrjagin algebra is the free graded commutative algebra on the subspace of primitives, by \cite[\S A]{milnormoorehopfalgebras}.
\end{proof}

\begin{remark} \label{rem:homologyhopfalgebra2}
We could have formulated Lemma \ref{lem:homologyhopfalgebra} for the $H$-space $B_{\com}U$ rather than the homotopy orbit $B_{\com}U\sslash U$. From the fact that the inclusion $B_{\com}U\rightarrow B_{\com}U\sslash U$ is a map of $H$-spaces and by dualising (\ref{eq:mapcoh1}) we infer that the map of Pontrjagin rings $H_\ast(B_{\com}U)\rightarrow H_{\ast}(B_{\com}U\sslash U)$ is the embedding onto the subring generated by $\zeta_{a,b}$ for $b\geq 1$.
\end{remark}

\paragraph{The Hopf ring.} So far we have only used the additive $H$-space structure on $B_{\com}U\sslash U$. In order to further simplify the homology algebra we can take into account the second $H$-space structure
\[
B_{\com}U\sslash U \times B_{\com}U\sslash U\stackrel{\otimes}{\longrightarrow} B_{\com}U\sslash U
\]
coming from the tensor product in the unitary groups. This leads naturally to the concept of a \emph{Hopf ring}. The original reference for this notion is Ravenel and Wilson \cite{wilsonhopfring}, but see also the exposition \cite{wilsonhopfringalgtop}.

Suppose that $X$ is a space equipped with a group-like `additive' $H$-space product $\oplus: X\times X\rightarrow X$ and a `multiplicative' one $\otimes: X\times X\rightarrow X$ satisfying the axioms of a ring up to homotopy. In our case, $X$ will be the space $\mathbb{Z}\times B_{\com}U\sslash U$. The additive structure induces the graded Pontrjagin product on $H_\ast(X)$. Using the diagonal map $\Delta: X\rightarrow X\times X$ the Pontrjagin algebra $H_\ast(X)$ becomes a Hopf algebra. The multiplicative $H$-space structure on $X$ induces an additional graded product on $H_\ast(X)$ which is denoted by the symbol $\circ$. The Hopf algebra $H_\ast(X)$ together with the $\circ$-product is called a Hopf ring. More abstractly, a Hopf ring is a ring object in the category of coalgebras (while a Hopf algebra would be a group object in the same category). \bigskip

Let $[n]\in H_0(\mathbb{Z})$ be the homology class determined by $n\in \mathbb{Z}$.
\begin{theorem} \label{thm:hopfring}
Rationally, the Hopf ring $H_{\ast}(\mathbb{Z}\times B_{\com}U\sslash U)$ is generated by $[1]\otimes 1$ and the classes $[0]\otimes \zeta_{1,0}$ and $[0]\otimes \zeta_{0,1}$ in degree two. In this ring the class $\zeta_{a,b}:=[0]\otimes \zeta_{a,b}$ has the presentation
\[
\zeta_{a,b}=\frac{\zeta_{1,0}^{\circ a}\circ \zeta_{0,1}^{\circ b}}{a!b!}\,,\quad (a,b)\in \mathbb{N}^2-\{(0,0)\}\, .
\]
\end{theorem}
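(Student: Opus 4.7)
The strategy is to directly compute the pairing of $\zeta_{1,0}^{\circ a}\circ\zeta_{0,1}^{\circ b}$ with the polynomial generators $z_{c,d}$, using the cohomological coproduct $\Psi$ dual to the $\otimes$-multiplication on $\mathbb{Z}\times B_{\com}U\sslash U$. To determine $\Psi$ on $H^\ast(B_{\com}U\sslash U;\mathbb{Q})$, I would work at finite rank via the conjugation model of Section \ref{sec:cohomologybcomg}: the tensor product inclusion $T(m)\otimes T(n)\hookrightarrow T(mn)$ that realises $\otimes$ at the torus level sends a diagonal variable indexed by $(i,j)\in[m]\times[n]$ to $x_i+x_j'$ in the $\mathbf{x}$-factor and to $y_i+y_j'$ in the $\mathbf{y}$-factor, since Chern roots add under tensor product of line bundles. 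Binomially expanding $\sum_{i,j}(x_i+x_j')^c(y_i+y_j')^d$ and stabilising over $m$ and $n$ yields
\[
\Psi(z_{c,d})=\sum_{p,r}\binom{c}{p}\binom{d}{r}\,z_{p,r}\otimes z_{c-p,d-r},
\]
and a straightforward induction gives the iterated coproduct
\[
\Psi^{(k-1)}(z_{c,d})=\sum_{\substack{p_1+\cdots+p_k=c\\ r_1+\cdots+r_k=d}}\binom{c}{p_1,\ldots,p_k}\binom{d}{r_1,\ldots,r_k}\,z_{p_1,r_1}\otimes\cdots\otimes z_{p_k,r_k}.
\]

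Next I would verify that $\zeta_{1,0}^{\circ a}\circ\zeta_{0,1}^{\circ b}$ is additively primitive. This is a short induction from the Hopf ring axioms: $\circ$ is a coalgebra map, $[0]$ is absorbing for $\otimes$, and any positive-degree class $\alpha$ in the $\{0\}$-component thus satisfies $[0]\circ\alpha=\epsilon(\alpha)\cdot[0]=0$, so writing out $\Delta(\alpha\circ\beta)$ for additively primitive $\alpha,\beta$ of positive degree in the $\{0\}$-component collapses to $(\alpha\circ\beta)\otimes[0]+[0]\otimes(\alpha\circ\beta)$. The key pairing then runs as
\[
\langle\zeta_{1,0}^{\circ a}\circ\zeta_{0,1}^{\circ b},\,z_{c,d}\rangle=\langle\zeta_{1,0}^{\otimes a}\otimes\zeta_{0,1}^{\otimes b},\,\Psi^{(a+b-1)}z_{c,d}\rangle.
\]
Using $\langle\zeta_{p,r},z_{p',r'}\rangle=\delta_{(p,r),(p',r')}$, exactly one summand of $\Psi^{(a+b-1)}z_{c,d}$ survives, namely the one with $(p_i,r_i)=(1,0)$ for $i\leq a$ and $(p_i,r_i)=(0,1)$ for $i>a$; this forces $(c,d)=(a,b)$ and contributes the multinomial factor $a!\,b!$. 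Since primitivity kills pairings against decomposables, we deduce $\zeta_{1,0}^{\circ a}\circ\zeta_{0,1}^{\circ b}=a!\,b!\,\zeta_{a,b}$, establishing the presentation.

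The generation claim then follows by combining this presentation with Lemma \ref{lem:homologyhopfalgebra}, which already identifies $H_\ast(B_{\com}U\sslash U;\mathbb{Q})$ as the polynomial Pontryagin algebra on the primitives $\zeta_{a,b}$, and noting that $[1]\otimes 1$ together with its antipode generates the $\mathbb{Z}$-factor under the Pontryagin product. The most delicate step I anticipate is the passage from finite to infinite rank in the derivation of $\Psi$: one must check that the binomial identity obtained at finite rank is compatible with the stabilisation maps $B_{\com}U(n)\sslash U(n)\to B_{\com}U(n+1)\sslash U(n+1)$ along which the $z_{c,d,n}$ form a coherent sequence, and that it descends to the $W$-invariant description recalled from Section \ref{sec:cohomologybcomg}.
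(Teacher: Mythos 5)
Your proposal rests on the same key computation as the paper's proof: the effect of the tensor product on the multisymmetric generators, computed at finite rank through the conjugation/torus model, where the Chern roots of a tensor product add. The paper records this as the formula $\mu_{m,n}^{\ast}(z_{a,b})=\sum_{i,j}\binom{a}{i}\binom{b}{j}z_{a-i,b-j}\otimes z_{i,j}$ for the component maps $X[m]\times X[n]\to X[mn]$ (Lemma \ref{lem:mumncoh}), which is your $\Psi$ after reindexing. Where you diverge is in the bookkeeping: the paper dualises to a homology product formula $(\mu_{m,n})_\ast(\zeta_{a,b}\otimes\zeta_{c,d})=\binom{a+c}{a}\binom{b+d}{b}\zeta_{a+c,b+d}+mn\,\zeta_{a,b}\zeta_{c,d}$ and then removes the decomposable correction by formal Hopf ring manipulations translating from the $[m],[n]$ components to the $[0]$ component, whereas you stay in cohomology, prove $\zeta_{1,0}^{\circ a}\circ\zeta_{0,1}^{\circ b}$ is primitive from the Hopf ring axioms (your argument via $\alpha\circ[0]=\epsilon(\alpha)[0]$ is correct), and evaluate the class by pairing with generators through the iterated coproduct. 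Both routes work and have essentially the same content. One caveat: you have slightly misidentified where the delicacy lies. The stabilisation over $m,n$ is unproblematic (it is handled by the injectivity of $j_k^\ast$ in low degrees, via Vaccarino's result, exactly as in the paper). The genuine subtlety is that the coproduct formula is \emph{component-dependent} -- the boundary terms involve $z_{0,0}$, which on $X[n]$ is the rank $n$, not a stable class -- and the finite-rank derivation only gives you $\mu_{m,n}^\ast$ for $m,n$ at least the cohomological degree, while your pairing $\langle\alpha\circ\beta,z_{c,d}\rangle$ for classes on the $[0]$ component requires $\mu_{0,0}^\ast$. You must still transfer the formula from large components to the zero component (e.g.\ by the translation/distributivity identities in the Hopf ring, which is precisely what the paper's final manipulation does); your primitivity argument does not by itself supply this.
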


We point out that the purely algebraic structure of the Hopf ring can be easily obtained from the homotopy ring $\pi_\ast(E)\otimes \mathbb{Q}$. Instead, our intention in proving this result is to obtain a presentation of each class $\zeta_{a,b}$ of a simple form as displayed in the theorem. The proof of the theorem is a straightforward computation, and we shall present it in Section \ref{sec:proofhopfring}. We conclude this section with a sequence of corollaries.

\begin{remark} \label{rem:hopfring2}
The inclusion $B_{\com}U\rightarrow \mathbb{Z}\times B_{\com}U\sslash U$ is a map of additive and multiplicative $H$-spaces, so by Remark \ref{rem:homologyhopfalgebra2} we can regard $\tilde{H}_\ast(B_{\com}U)$ as the ideal in the Hopf ring generated by $\zeta_{0,1}:=[0]\otimes \zeta_{0,1}$.
\end{remark}

\paragraph{The Hurewicz homomorphism.} The Hurewicz map gives a direct relationship between the $K$-homology classes $y_n$ and the cohomology classes $z_{a,b}$. Recall that the space $\mathbb{Z}\times B_{\com}U\sslash U$ has two Bott elements denoted $u$ and $y_1$ which live in the second homotopy group of the base space and the fibre, respectively, of the split fibration sequence
\[
B_{\com}U\rightarrow \mathbb{Z}\times B_{\com}U\sslash U\rightarrow \mathbb{Z}\times BU\, .
\]
It is readily checked that the Hurewicz homomorphism in dimension $2$ takes $u$ to $\zeta_{1,0}$ and $y_1$ to $\zeta_{0,1}$ (cf. Remark \ref{rem:homologyhopfalgebra2}).

\begin{corollary} \label{cor:hurewicz}
The Hurewicz homomorphism $h: \pi_\ast(B_{\com}U)\rightarrow \tilde{H}_\ast(B_{\com}U)$ is determined by the formula
\[
h(y_n)=\sum_{j=0}^{n-1} \frac{s(n,n-j)}{\binom{n}{j}} \,\zeta_{j,n-j}\,, \quad n\geq 1\,,
\]
where the $s(n,n-j)$ are the Stirling numbers of the first kind.
\end{corollary}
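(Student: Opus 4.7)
The plan is to express $y_n$ as an explicit polynomial in $y_1$ and $u$ inside the rational ring $\pi_\ast(\mathbb{Z}\times B_{\com}U\sslash U)\otimes\mathbb{Q}\cong ku_\ast(BU(1))\otimes\mathbb{Q}$, apply the multiplicative Hurewicz map $h$ using the identifications $h(u)=\zeta_{1,0}$ and $h(y_1)=\zeta_{0,1}$ recorded just before the corollary, and then invoke Theorem \ref{thm:hopfring} to convert the resulting $\circ$-monomials in $\zeta_{1,0}$ and $\zeta_{0,1}$ into the basis classes $\zeta_{a,b}$. Multiplicativity of $h$ with respect to the tensor product is automatic from naturality applied to the $E_\infty$-ring space structure on $\mathbb{Z}\times B_{\com}U\sslash U$; by Remark \ref{rem:hopfring2} the image of $h|_{\pi_\ast(B_{\com}U)}$ lies in the sub-ideal generated by $\zeta_{0,1}$, consistent with the form of the answer (no $\zeta_{j,0}$ will appear).

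For the first step, solving the relation (\ref{eq:relation}) for $y_{n+1}$ gives $y_{n+1}=(y_1-nu)\,y_n/(n+1)$, and induction on $n$ yields
\[
y_n=\frac{1}{n!}\prod_{k=0}^{n-1}(y_1-ku).
\]
Since the relations (\ref{eq:relation}) generate all rational relations in $ku_\ast(BU(1))$ (as noted immediately after (\ref{eq:relation})), this identity holds in the rational ring.

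Next, using the defining identity $\prod_{k=0}^{n-1}(t-k)=\sum_{j=0}^{n}s(n,j)\,t^j$ of the signed Stirling numbers of the first kind, together with $s(n,0)=0$ for $n\geq 1$, one has
\[
\prod_{k=0}^{n-1}(y_1-ku)=\sum_{j=1}^{n}s(n,j)\,y_1^j u^{n-j}.
\]
Applying $h$ and invoking Theorem \ref{thm:hopfring} to rewrite $\zeta_{0,1}^{\circ j}\circ\zeta_{1,0}^{\circ(n-j)}=j!\,(n-j)!\,\zeta_{n-j,j}$ then gives
\[
h(y_n)=\frac{1}{n!}\sum_{j=1}^{n}s(n,j)\,j!\,(n-j)!\,\zeta_{n-j,j}=\sum_{j=1}^{n}\frac{s(n,j)}{\binom{n}{j}}\,\zeta_{n-j,j},
\]
and reindexing by $j':=n-j$ recovers the formula in the statement.

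The main (mild) point of care is bookkeeping: distinguishing the tensor product on $\pi_\ast$ from the $\circ$-product on the Hopf ring, and confirming that $h$ intertwines them. This is precisely what makes the Hopf-ring presentation of Theorem \ref{thm:hopfring} convert the Stirling expansion into a closed formula in terms of the $\zeta_{a,b}$.
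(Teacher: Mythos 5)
Your proposal is correct and follows essentially the same route as the paper: present $n!\,y_n$ as $\prod_{k=0}^{n-1}(y_1-ku)$ via the relations (\ref{eq:relation}), use multiplicativity of the Hurewicz map for the $\circ$-product together with $h(u)=\zeta_{1,0}$, $h(y_1)=\zeta_{0,1}$, expand via the Stirling numbers of the first kind, and convert the $\circ$-monomials using Theorem \ref{thm:hopfring}. The only cosmetic difference is your reindexing at the end; the aside about (\ref{eq:relation}) generating all rational relations is not actually needed, since the identity follows from the relations themselves by induction.
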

\begin{proof}
By construction, the Hurewicz map is multiplicative for the graded ring structure on homotopy groups and the $\circ$-product on homology. Using (\ref{eq:relation}) we can present the homotopy class $y_n$ as
\[
n!\,y_n= \prod_{j=0}^{n-1}(y_1-ju)
\]
and, therefore,
\[
n!\, h(y_n)=\circprod_{j=0}^{n-1}(\zeta_{0,1}-j \zeta_{1,0})=\sum_{j=0}^{n-1} s(n,n-j)\,\zeta_{0,1}^{\circ (n-j)}\circ \zeta_{1,0}^{\circ j}\,,
\]
by definition of the Stirling numbers. Using the relation in Theorem \ref{thm:hopfring} and dividing by $n!$ yields the desired formula.
\end{proof}

\paragraph{The splitting of $B_{\com}U$ on cohomology.} Our next aim is to describe the equivalence in Theorem \ref{thm:splitting} on rational cohomology groups. Let us write $t_k: BU\langle 2k\rangle\rightarrow BU$ for the canonical map. Let $\pi_k: B_{\com}U\rightarrow BU\langle 2k\rangle$ denote the composite of the equivalence in Theorem \ref{thm:splitting} with the projection onto the $k$-th factor. Finally, recall that $H^\ast(BU)\cong \mathbb{Q}[z_n\,|\, n\geq 1]$, where $z_n=n!\,\textnormal{ch}_n$ is a multiple of the $n$-th component of the Chern character.

\begin{corollary} \label{cor:splittingoncohomology}
For all $k\geq 1$ and all $n\geq 1$ the following formula holds
\[
(t_k\circ \pi_k)^\ast(z_n)=k! \,\sum_{j=0}^{n-1} \binom{n}{j}\,S(n-j,k)\, z_{j,n-j}\, ,
\]
where the $S(n-j,k)$ are the Stirling numbers of the second kind.
\end{corollary}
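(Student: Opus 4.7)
The plan is to pair both sides against an explicit basis of the primitive subspace $PH_{2n}(B_{\com}U;\mathbb{Q})$. Since $z_n = n!\,\textnormal{ch}_n$ is primitive in $H^\ast(BU;\mathbb{Q})$, and $t_k\circ\pi_k$ is induced by a map of (additive) $E_\infty$-$\mathbb{Z}\times BU$-modules (Theorem \ref{thm:splitting}), the pullback $(t_k\circ\pi_k)^\ast z_n$ is primitive; hence it is a $\mathbb{Q}$-linear combination of the indecomposable generators $\{z_{j,n-j}:0\leq j\leq n-1\}$. It suffices to compute the pairings against a dual basis of $PH_{2n}(B_{\com}U;\mathbb{Q})$ and match them with those of the claimed right-hand side.

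The first step is to produce such a basis of primitive homology from the Hurewicz images $h(u^{n-k}y_k)$ for $1\leq k\leq n$. Using $k!\,y_k=\prod_{j=0}^{k-1}(y_1-ju)$ from (\ref{eq:relation}) and multiplicativity of the Hurewicz homomorphism with respect to the $\circ$-product, exactly as in the proof of Corollary \ref{cor:hurewicz}, I expand
\[
k!\,h(u^{n-k}y_k)=\zeta_{1,0}^{\circ(n-k)}\circ\!\!\circprod_{j=0}^{k-1}(\zeta_{0,1}-j\,\zeta_{1,0})=\sum_{p=1}^{k} s(k,p)\,\zeta_{0,1}^{\circ p}\circ\zeta_{1,0}^{\circ(n-p)}.
\]
Applying Theorem \ref{thm:hopfring} to each $\circ$-monomial yields
\[
h(u^{n-k}y_k)=\frac{1}{k!}\sum_{p=1}^{k}s(k,p)\,p!(n-p)!\,\zeta_{n-p,p}\,.
\]

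The second step is to evaluate $(t_k\circ\pi_k)^\ast z_n$ on each $h(u^{n-k'}y_{k'})$ using the splitting. Under $B_{\com}U\simeq\prod_{n\geq 1}BU\langle 2n\rangle$ the class $u^{n-k'}y_{k'}\in\pi_{2n}(B_{\com}U)$ is the image of the generator of $\pi_{2n}(BU\langle 2k'\rangle)$ because the summand inclusion $\Sigma^{2k'}ku\hookrightarrow b_{\com}u$ is multiplication by $y_{k'}$. Hence $(t_k\circ\pi_k)_\ast$ kills $u^{n-k'}y_{k'}$ for $k'\neq k$ and sends $u^{n-k}y_k$ to the Bott class $u^n\in\pi_{2n}(BU)$. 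Combined with $\langle z_n,h(u^n)\rangle_{BU}=n!$ (equivalent to $\textnormal{ch}(\beta)=x$ on $S^2$ via multiplicativity of the Chern character), naturality of the Kronecker pairing gives
\[
\bigl\langle (t_k\circ\pi_k)^\ast z_n,\ h(u^{n-k'}y_{k'})\bigr\rangle=n!\,\delta_{k,k'}\,.
\]

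The last step is to verify that the proposed formula produces the same pairings. Writing $R:=k!\sum_{j=0}^{n-1}\binom{n}{j}S(n-j,k)\,z_{j,n-j}$, the duality $\langle z_{j,n-j},\zeta_{i,n-i}\rangle=\delta_{ij}$ and the formula above give
\[
\langle R,\,h(u^{n-k'}y_{k'})\rangle=\frac{k!\,n!}{k'!}\sum_{m=1}^{k'}s(k',m)\,S(m,k)\,,
\]
after the substitution $m=n-i$ and a collapse of factorials. The classical Stirling inversion $\sum_{m}s(k',m)S(m,k)=\delta_{k',k}$, valid since $S(m,k)=0$ for $m<k$ and $s(k',m)=0$ for $m>k'$, collapses this to $n!\,\delta_{k,k'}$, as required. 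Since $(t_k\circ\pi_k)^\ast z_n$ and $R$ are both primitive in $H^{2n}(B_{\com}U;\mathbb{Q})$ and pair identically with a basis of $PH_{2n}(B_{\com}U;\mathbb{Q})$, they are equal. The only real obstacle is the careful bookkeeping of factorials and Stirling numbers; conceptually everything is controlled by the Hopf-ring presentation of Theorem \ref{thm:hopfring} together with the spectrum-level splitting of Theorem \ref{thm:splitting}.
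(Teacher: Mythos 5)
Your proof is correct, and it takes a route that is recognisably dual to the one in the paper rather than identical to it. The paper computes $(t_k\circ\pi_k)_\ast$ directly on each homology generator $\zeta_{a,b}$: it uses the $\tilde H_\ast(BU)$-Hopf-module structure to reduce to $(t_k\circ\pi_k)_\ast(\zeta_{0,1}^{\circ b})=h(c_{b,k}u^b)$, and then extracts the coefficient $c_{b,k}=k!\,S(b,k)$ from the full formal group law identity (\ref{eq:ys}) by a partition-counting argument. You instead expand in the other direction, writing $k!\,h(u^{n-k}y_k)$ in terms of the $\zeta_{n-p,p}$ via $k!\,y_k=\prod_{j}(y_1-ju)$ (first-kind Stirling numbers, exactly as in Corollary \ref{cor:hurewicz}), and then \emph{verify} the asserted formula by pairing both sides against the resulting basis of $PH_{2n}(B_{\com}U;\mathbb{Q})$ and invoking the orthogonality $\sum_m s(k',m)S(m,k)=\delta_{k'k}$. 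All the supporting steps check out: $(t_k\circ\pi_k)^\ast z_n$ is primitive because $\pi_k$ and $t_k$ are additive $H$-maps; the elements $h(u^{n-k'}y_{k'})$ do form a basis of the primitives since your formula exhibits a triangular change of basis with nonzero diagonal entries $(n-k)!\,\zeta_{n-k,k}$; and the pairing between $PH^{2n}$ and $PH_{2n}$ is perfect by the duality $\langle z_{j,n-j},\zeta_{i,n-i}\rangle=\delta_{ij}$. What your approach buys is that it only needs the special relations (\ref{eq:relation}) plus classical Stirling inversion, at the cost of being a verification of a guessed formula rather than a derivation; the paper's argument produces the coefficients $k!\,S(b,k)$ from scratch but requires the stronger power-series identity (\ref{eq:ys}).
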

\begin{proof}
We use the commuting diagram
\[
\xymatrix{
\pi_\ast(B_{\com}U) \ar[r]^{\pi_k} \ar[d]^-{h}		& \,\pi_\ast(BU\langle 2k\rangle) \ar[d]^-{h} \ar[r]^-{t_k}	& \pi_\ast(BU) \ar[d]^-{h}	\\
\tilde{H}_{\ast}(B_{\com}U) \ar[r]^-{(\pi_k)_\ast}		& \,\tilde{H}_{\ast}(BU\langle 2k\rangle) \ar[r]^-{(t_k)_\ast}		& \tilde{H}_\ast(BU)
}
\]
to compute the map induced by $t_k\circ \pi_k$ on homology and then dualize to obtain the displayed formula for cohomology. The vertical maps in the diagram are the corresponding Hurewicz maps.

Let $\zeta_{a,b}\in \tilde{H}_\ast(B_{\com}U)$. As the splitting in Theorem \ref{thm:splitting} is a splitting of $BU$-modules, the map induced by $t_k\circ \pi_k$ on homology is a map of $\tilde{H}_\ast(BU)$-Hopf modules. The Hopf ring $\tilde{H}_\ast(BU)$ can be seen as the subring of $H_\ast(\mathbb{Z}\times B_{\com}U\sslash U)$ generated by $[0]\otimes \zeta_{1,0}$. Thus, if we decompose $\zeta_{a,b}$ using Theorem \ref{thm:hopfring}, we obtain
\[
(t_k\circ \pi_k)_\ast (\zeta_{a,b})= \frac{\zeta_{1,0}^{\circ a}}{a!b!}\circ (t_k\circ \pi_k)_\ast(\zeta_{0,1}^{\circ b})\,.
\]
As $h(y_1)=\zeta_{0,1}$ and the Hurewicz map is multiplicative, we have that $h(y_1^b)=\zeta_{0,1}^{\circ b}$. We can expand $y_1^b$ in the basis $\{u^{b-k}y_k\,|\, 1\leq k\leq b\}$ for $\pi_{2b}(B_{\com}U)$. Let $c_{b,k}\in \mathbb{Z}$ denote the coefficient in front of $u^{b-k} y_k$. Then
\[
(t_k\circ \pi_k)_\ast (\zeta_{a,b})=\frac{c_{b,k}}{a!b!} \,\zeta_{1,0}^{\circ a}\circ h(u^b)=\frac{c_{b,k}}{a!b!} \,\zeta_{1,0}^{\circ a}\circ \zeta_{1,0}^{\circ b}=c_{b,k}\binom{a+b}{a} \, \zeta_{a+b,0}\, ,
\]
where $\zeta_{a+b,0}\in H_\ast(BU)$ is dual to $z_{a+b}\in H^\ast(BU)$.

It remains to determine the coefficient $c_{b,k}$. Recall from (\ref{eq:ys}) that the multiplicative structure of $\pi_\ast(B_{\com}U)$ is encoded in the identity of power series $y(x_1)y(x_2)=y(x_1+x_2+ux_1x_2)$. Let $e_j$ denote the $j$-th elementary symmetric polynomial. By iteration,
\[
 \prod_{j=1}^b y(x_j)=y\left(\sum_{j=1}^b u^{j-1} e_j(x_1,\dots,x_b)\right)\,,
\]
so that $c_{b,k}$ is the coefficient in front of $x_1\cdots x_b$ in an expansion of
\[
 \left(\sum_{j=1}^b u^{j-1} e_j(x_1,\dots,x_b)\right)^k\,.
\]
This coefficient is counting $k!$ times the number of ways to partition the set $\{x_1,\dots,x_b\}$ into $k$ disjoint non-empty subsets, that is, $c_{b,k}=k! S(b,k)$, by definition of the Stirling number.
\end{proof}

\begin{example} \label{ex:inclusionbcomucoh}
In the case $k=1$ Corollary \ref{cor:splittingoncohomology} describes the effect of the inclusion map $i: B_{\com}U\rightarrow BU$ on cohomology. We get
\[
i^\ast(z_n)=\sum_{j=0}^{n-1}\binom{n}{j} z_{j,n-j}\, ,
\]
using the fact that $S(n-j,1)=1$. This formula can also be obtained in a different way. See Appendix \ref{sec:modulestructure}, specifically Example \ref{ex:cohomologybcomun}.
\end{example}

Our next result describes the classes $z_{0,k}$. Recall that the space $BU\langle 2k+2\rangle$ is obtained from $BU\langle 2k\rangle$ by killing the $2k$-th homotopy group, i.e., it is obtained as the homotopy fibre of a certain map
\[
\tilde{c}_k: BU\langle 2k\rangle \longrightarrow K(\mathbb{Z},2k)\,.
\]
The map $\tilde{c}_k$ represents a fractional Chern class. Suppose that $c_k: BU\rightarrow K(\mathbb{Z},2k)$ represents the $k$-th Chern class, then the effect of the composite map $c_k\circ t_k: BU\langle 2k\rangle \rightarrow K(\mathbb{Z},2k)$ on homotopy groups in degree $2k$ is multiplication by the integer $c_k(u^k)=(-1)^{k-1}(k-1)!$ (the $k$-th Chern class of the $k$-th power of the Bott element). This shows that the homotopy class of $c_k\circ t_k$ is divisible by $(k-1)!$ and we can take $\tilde{c}_k$ to be $\tilde{c}_k=(-1)^{k-1}c_k/(k-1)!$.

\begin{definition} \label{def:lambda}
There are integral characteristic classes $\lambda_k\in H^{2k}(B_{\com}U,\mathbb{Z})$ determined by the homotopy class of the composite map $\tilde{c}_k\circ \pi_k$ in the diagram
\[
\xymatrix{
B_{\com}U \ar[r]^-{\pi_k} \ar@/^2pc/[rr]^-{\lambda_k}	& BU\langle 2k\rangle \ar[r]^-{\tilde{c}_k} \ar[d]^-{t_k}		& K(\mathbb{Z},2k) \ar[d]^-{\mathbb{Z}\,\rightarrow \,\mathbb{Q}}	\\
										& BU  \ar[r]^-{\textnormal{ch}_k}					& K(\mathbb{Q},2k)\,.
}
\]
\end{definition}
The square homotopy commutes, because in rational cohomology the map $(t_k)^\ast: H^\ast(BU)\rightarrow H^\ast(BU\langle 2k\rangle)$ is the quotient map by the ideal generated by the Chern classes $c_j$ for all $j< k$ and $\tilde{c}_k$ is the class represented by $\textnormal{ch}_k$ in this quotient. \bigskip

Let $\lambda_k^{\mathbb{Q}}$ be the image of $\lambda_k$ in $H^{2k}(B_{\com}U,\mathbb{Q})$ under the map induced by the coefficient homomorphism $\mathbb{Z}\rightarrow \mathbb{Q}$.

\begin{corollary} \label{cor:lambda}
For all $k\geq 1$ we have $z_{0,k}=\lambda_k^{\mathbb{Q}}$ in $H^{2k}(B_{\com}U,\mathbb{Q})$, hence the classes $z_{0,k}$ are defined in integral cohomology.
\end{corollary}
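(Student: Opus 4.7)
The plan is to reduce the statement to an application of Corollary \ref{cor:splittingoncohomology}. By the definition of $\lambda_k$, its image $\lambda_k^{\mathbb{Q}}$ in rational cohomology is $\pi_k^{\ast}(\tilde{c}_k^{\mathbb{Q}})$, and by the homotopy commutativity of the square in Definition \ref{def:lambda} this equals $(t_k\circ \pi_k)^{\ast}(\textnormal{ch}_k)$. So the task reduces to computing the pull-back $(t_k\circ \pi_k)^{\ast}(\textnormal{ch}_k)$ and checking it equals $z_{0,k}$.

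Next I would invoke Corollary \ref{cor:splittingoncohomology} with $n=k$. Since $z_k=k!\,\textnormal{ch}_k$, dividing the stated formula by $k!$ gives
\[
(t_k\circ \pi_k)^{\ast}(\textnormal{ch}_k)=\sum_{j=0}^{k-1}\binom{k}{j}\,S(k-j,k)\,z_{j,k-j}\, .
\]
The Stirling numbers $S(k-j,k)$ count partitions of a $(k-j)$-element set into $k$ non-empty blocks; they vanish for $j\geq 1$ and equal $1$ for $j=0$. Therefore all but the $j=0$ term disappear, leaving $(t_k\circ \pi_k)^{\ast}(\textnormal{ch}_k)=z_{0,k}$, and the identification $\lambda_k^{\mathbb{Q}}=z_{0,k}$ follows. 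The final assertion — that the $z_{0,k}$ lift to integral cohomology — is automatic, since $\lambda_k$ is constructed as an integral class in Definition \ref{def:lambda} whose rationalisation is $z_{0,k}$.

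There is no real obstacle: the argument is a one-line consequence of the splitting formula in Corollary \ref{cor:splittingoncohomology} once one recognises that among all the terms $z_{j,n-j}$ appearing in the pull-back of $\textnormal{ch}_n$, only those with $n=k$ and $j=0$ can survive — precisely because $S(k-j,k)$ is supported on $j=0$. The only thing worth double-checking is the rational identification $\tilde{c}_k^{\mathbb{Q}}=t_k^{\ast}(\textnormal{ch}_k)$ on $BU\langle 2k\rangle$, which follows from Newton's identity $k!\,\textnormal{ch}_k=(-1)^{k-1}k\,c_k+\textnormal{(terms involving }c_j,\,j<k)$ together with the vanishing of $c_j$ for $j<k$ on the $(2k{-}1)$-connected cover.
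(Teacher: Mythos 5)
Your argument is correct and is essentially identical to the paper's own proof: both identify $\lambda_k^{\mathbb{Q}}$ with $(t_k\circ\pi_k)^{\ast}(\textnormal{ch}_k)$ via the diagram in Definition \ref{def:lambda}, then apply Corollary \ref{cor:splittingoncohomology} with $n=k$ and use $S(k-j,k)=0$ for $j>0$ together with $S(k,k)=1$. Your closing check that $\tilde{c}_k$ represents $\textnormal{ch}_k$ rationally on $BU\langle 2k\rangle$ matches the justification the paper gives immediately after Definition \ref{def:lambda}.
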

\begin{proof}
The diagram shows that $\lambda_k^{\mathbb{Q}}$ is determined by the homotopy class of $\textnormal{ch}_k\circ t_k\circ \pi_k$. By Corollary \ref{cor:splittingoncohomology}, this is the cohomology class
\[
\lambda_k^{\mathbb{Q}}=(t_k\circ \pi_k)^\ast(\textnormal{ch}_k)=(t_k\circ \pi_k)^\ast(z_k/k!)=z_{0,k}\,,
\]
where we used the fact that $S(k-j,k)=0$ for $j>0$ and $S(k,k)=1$.
\end{proof}

\begin{remark}
In terms of spectra, the characteristic classes $\lambda_k$ are given by the components of the splitting $E\simeq \bigvee_{n\geq 0}\Sigma^{2n}ku$ and then smashing over $ku$ with $H\mathbb{Z}$. In other words, they are given by the components of the map $\lambda: E\rightarrow H\mathbb{Z}\wedge BU(1)_+\simeq \bigvee_{n\geq 0} \Sigma^{2n}H\mathbb{Z}$ which appears in Corollary \ref{cor:ecofibre}. Thus they come from diagonalizing unitary representations of free abelian groups or, in more geometric terms, they are related to the eigenvalues of the transition functions of a transitionally commutative vector bundle.
\end{remark}

\paragraph{Comparison to the cohomology of $B_{\com}SU(2)$.} The formula in Corollary \ref{cor:splittingoncohomology} also tells us something about integral cohomology. We will illustrate this in an example, which relates to work in preparation with Antol\'in Camarena and Villarreal \cite{antolinbcomu2}. In \cite[Ex. 6.4]{ademcommutativity} Adem and G\'omez show that $H^{k}(B_{\com}SU(2),\mathbb{Z})\cong \mathbb{Z}/2\mathbb{Z}$ for all $k>2$ with $k\equiv 2\textnormal{ mod } 4$, and the authors were curious about the origin of this torsion (note that $H^\ast(BSU(2),\mathbb{Z})$ is torsionfree!). The purpose of this paragraph is to show (cf. Corollary \ref{cor:z2stable}) that these torsion classes are related to the stable cohomology class $\lambda_3$, that is, to the fractional Chern class
\[
B_{\com}SU(2)\xrightarrow{\textnormal{incl.}} B_{\com}U \xrightarrow{\pi_3} BU\langle 6\rangle \xrightarrow{c_3/2} K(\mathbb{Z},6)\, .
\]

To explain this, we begin by describing the canonical maps $i_{SU(2)}: B_{\com}SU(2)\rightarrow B_{\com}U$ and $i_{U(2)}: B_{\com}U(2)\rightarrow B_{\com}U$ induced by the inclusions of groups $SU(2)\hookrightarrow U$ and $U(2)\hookrightarrow U$ on integral cohomology. In \cite{villarrealthesis} (see also \cite{antolinbcomu2}) it is shown that the ring $H^\ast(B_{\com}U(2),\mathbb{Z})$ admits a presentation with four generators:\footnote{In \cite{villarrealthesis} the classes $d_1$, $d_2$ are denoted $y_1$, $y_2$, respectively. Unfortunately, we have already reserved these names for elements in the homotopy groups of $B_{\com}U$.}
\begin{equation} \label{eq:cohbcomu2}
H^\ast(B_{\com}U(2),\mathbb{Z})\cong \mathbb{Z}[c_1,c_2,d_1,d_2]/(2d_2-c_1d_1,\, d_1^2,\, d_1d_2,\, d_2^2)\,.
\end{equation}
The generators live in degrees $\textnormal{deg}(c_i)=2i$ and $\textnormal{deg}(d_i)=2i+2$, $i=1,2$. Abusing notation, the classes $c_1$ and $c_2$ are defined to be the images of the first and second universal Chern classes living in $H^\ast(BU(2),\mathbb{Z})$ under the canonical map $i^\ast: H^{\ast}(BU(2),\mathbb{Z})\rightarrow H^\ast(B_{\com}U(2),\mathbb{Z})$. In the work cited it is also shown that
\begin{equation} \label{eq:cohbcomsu2}
H^\ast(B_{\com}SU(2),\mathbb{Z})\cong H^\ast(B_{\com}U(2),\mathbb{Z})/(c_1)\,,
\end{equation}
and that the map induced by $SU(2)\hookrightarrow U(2)$ is the projection map. From the presentation one sees that the class $d_2$ becomes $2$-torsion in $H^\ast(B_{\com}SU(2),\mathbb{Z})$ and its $c_2$-multiples give rise to the $\mathbb{Z}/2\mathbb{Z}$-summands discovered in \cite{ademcommutativity}. 

The integral cohomology groups of $B_{\com}U$ can be obtained from the equivalence in Theorem \ref{thm:splitting}. Since the cohomology of $B_{\com}U(2)$ is generated in degrees $\leq 6$, it will suffice to consider the cohomology of $BU\times BSU\times BU\langle 6\rangle$. In the relevant degrees these spaces have the following cohomology classes, each generating an infinite cyclic summand:
\begin{center}
{\def\arraystretch{1.13}\tabcolsep=5pt
\begin{tabular}{l||c|c|c|c}
deg	&	$BU$	&	$BSU$		&	$BU\langle 6\rangle$		\\	\hline\hline
$2$	&	$c_1$	&				&						\\	
$4$	&	$c_2$	&	$\lambda_2$	&						\\
$6$	&	$c_3$	&	$c_3^{SU}$	&	$\lambda_3$			
\end{tabular}
}
\end{center}
Here we use the notation $c_3^{SU}$ to distinguish the third Chern class in $BSU$ from that in $BU$, and $\lambda_2$ denotes minus the second Chern class in $BSU$ to match with our earlier notation.

\begin{lemma} \label{lem:bcomutou2}
The integral cohomology of $B_{\com}U$ in degree $\leq 6$ maps in the following way onto the cohomology of $B_{\com}SU(2)$ and $B_{\com}U(2)$:
\begin{center}
{\def\arraystretch{1.30}\tabcolsep=5pt
\begin{tabular}{l||l|l|l}
$B_{\com}U$	&	$B_{\com}U(2)$					&	$B_{\com}SU(2)$	\\	\hline\hline
$c_1$		&	$c_1$						&	$0$				\\
$c_2$		&	$c_2$						&	$c_2$		\\
$c_3$		&	$0$								&	$0$				\\
$\lambda_2$	&	$c_1^2-2c_2+d_1$			&	$-2c_2+d_1$	\\	
$c_3^{SU}$	&	$2c_1^3-6c_1c_2+8d_2$	&	$0$				\\
$\lambda_3$	&	$c_1^3-3 c_1c_2+3d_2$	&	$d_2$				
\end{tabular}
}
\end{center}
In particular, the canonical map $i_{U(2)}: B_{\com}U(2)\rightarrow B_{\com}U$ induces a surjective map of integral cohomology rings.
\end{lemma}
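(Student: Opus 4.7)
The plan is to reduce the computation to a rational calculation, exploiting the fact that $H^k(B_{\com}U(2),\mathbb{Z})$ is torsion-free for $k\le 6$. A direct inspection of the presentation (\ref{eq:cohbcomu2}) shows that degree $4$ is free on $\{c_1^2, c_2, d_1\}$ and degree $6$ is free on $\{c_1^3, c_1c_2, d_2\}$, the relation $2d_2 = c_1d_1$ merely identifying $c_1d_1$ with $2d_2$ without introducing torsion. Consequently, each entry of the table is uniquely determined by its image under the coefficient map $\mathbb{Z}\to\mathbb{Q}$, so it suffices to compute $i_{U(2)}^\ast$ rationally.

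For the rows of $c_1,c_2,c_3$, I would invoke naturality of Chern classes: since the composite $B_{\com}U(2)\to B_{\com}U\to BU$ factors through $BU(2)$, on which $c_3=0$ while $c_1,c_2$ remain the universal Chern classes, the first three entries are immediate. For the more interesting rows I would work in the Adem--G\'omez multisymmetric description of $H^\ast(B_{\com}U(2),\mathbb{Q})$ and first translate the integral generators $d_1,d_2$ into that basis. Using $i^\ast(z_2)=z_{0,2}+2z_{1,1}$ from Example \ref{ex:inclusionbcomucoh}, one obtains $y_1y_2=c_2+z_{1,1,2}$ and hence the key expansions $z_{0,2,2}=c_1^2-2c_2-2z_{1,1,2}$ and $z_{0,3,2}=c_1^3-3c_1c_2-3z_{1,2,2}$ in $H^\ast(B_{\com}U(2),\mathbb{Q})$. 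Combined with Corollary \ref{cor:lambda} (which identifies $\lambda_k^{\mathbb{Q}}=z_{0,k}$) and the relation $2d_2=c_1d_1$, this pins down the rational expressions of $d_1,d_2$ in terms of $z_{1,1,2},z_{1,2,2}$, after which the $\lambda_2$ and $\lambda_3$ entries follow by direct substitution.

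The remaining entry $c_3^{SU}$ is handled by applying Corollary \ref{cor:splittingoncohomology} at $k=2,n=3$ to express $\pi_2^\ast(c_3^{SU})$ as an explicit $\mathbb{Q}$-linear combination of $z_{0,3}$ and $z_{1,2}$ in $H^\ast(B_{\com}U,\mathbb{Q})$, which then restricts to $B_{\com}U(2)$ via $z_{a,b}\mapsto z_{a,b,2}$ and is expanded in the integral basis using the previous step. The $B_{\com}SU(2)$ column follows by reducing modulo $c_1$ via (\ref{eq:cohbcomsu2}), noting that $d_2$ becomes $2$-torsion there since $2d_2=c_1d_1=0$, so that any even multiple of $d_2$ vanishes. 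Surjectivity is then read directly off the table: $c_1,c_2$ lie in the image, hence so does $d_1=\lambda_2-(c_1^2-2c_2)$; both $2d_2=c_1d_1$ and $3d_2=\lambda_3-(c_1^3-3c_1c_2)$ are in the image, whence $d_2=3d_2-2d_2$ is as well. The main obstacle will be the careful bookkeeping of signs and powers of $2$ when translating between the multisymmetric and integral bases; the relation $2d_2=c_1d_1$ is what forces the correct normalization, and any shift in Villarreal's sign convention propagates through all of the formulas.
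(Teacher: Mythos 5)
Your proposal follows essentially the same route as the paper: reduce to rational coefficients via torsion-freeness of $H^\ast(B_{\com}U(2),\mathbb{Z})$, express the integral generators and the classes $z_{0,k}=\lambda_k^{\mathbb{Q}}$ (Corollary \ref{cor:lambda}) in the multisymmetric basis using Villarreal's identification of $d_1,d_2$, handle $c_3^{SU}$ via Corollary \ref{cor:splittingoncohomology} with $k=2$, $n=3$, and obtain the $B_{\com}SU(2)$ column by setting $c_1=0$. The only points to tidy are that your expansion of $z_{0,3,2}$ tacitly uses $z_{2,1,2}=0$ in the rank-two ring (true, since $z_{2,1}=e_1(\mathbf{x})z_{1,1}-e_2(\mathbf{x})z_{0,1}\in J_2$, but worth stating) and, as you note, the sign normalization of $d_1,d_2$ must be taken from \cite{villarrealthesis}, exactly as in the paper.
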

\begin{proof}
From the presentation we see that $H^\ast(B_{\com}U(2),\mathbb{Z})$ is torsionfree. In view of the rational isomorphism (\ref{eq:bcomgcohomology}) this implies that the conjugation map (\ref{eq:conjugationmap1}) detects the integral cohomology of $B_{\com}U(2)$, that is, the map
\begin{equation} \label{eq:conjugationmapu2}
H^\ast(B_{\com}U(2),\mathbb{Z})\rightarrow (H^\ast(U(2)/T(2),\mathbb{Q})\otimes H^\ast(BT(2),\mathbb{Q}))^{\Sigma_2}
\end{equation}
is injective. The target is generated as a $\mathbb{Q}$-algebra by the multisymmetric functions $z_{0,1}$, $z_{0,2}$, and $z_{1,1}$ defined in (\ref{eq:generatorzabn}) (here we omitted the third index from these classes, which only indicated the rank of the group). A careful inspection of the calculations in \cite[pp. 66]{villarrealthesis} (\cite{antolinbcomu2}) shows that $d_1\mapsto 2z_{1,1}$ and $d_2\mapsto z_{0,1}z_{1,1}$ under (\ref{eq:conjugationmapu2}). Moreover, either from Example \ref{ex:inclusionbcomucoh} or from (\ref{eq:ibcomun}) we see that $c_1\mapsto z_{0,1}$ and $c_2\mapsto (z_{0,1}^2-z_{0,2})/2+z_{1,1}$ under (\ref{eq:conjugationmapu2}). The effect of
\[
i_{U(2)}^\ast: H^\ast(B_{\com}U,\mathbb{Z})\rightarrow H^\ast(B_{\com}U(2),\mathbb{Z})
\]
can be described, then, by composing $i_{U(2)}^\ast$ with (\ref{eq:conjugationmapu2}) and identifying its image in terms of $z_{0,1}$, $z_{0,2}$ and $z_{1,1}$.

From Corollary \ref{cor:lambda} we see that $\lambda_2 \mapsto z_{0,2}$ and $\lambda_3 \mapsto z_{0,3}$ under this composite map. For $c_3^{SU}$ we use Corollary \ref{cor:splittingoncohomology}. First note that the map $t_2^\ast: H^\ast(BU)\rightarrow H^\ast(BSU)$ is the projection sending $c_1\mapsto 0$. Also recall that $z_3=c_1^3-3c_1c_2+3c_3$ in $H^\ast(BU)$, by definition of the class $z_3=3! \textnormal{ch}_3$. Therefore,
\[
(t_2\circ \pi_2)^\ast(z_3)=3\pi_2^\ast(c_3^{SU})=2! (S(3,2) z_{0,3}+3S(2,2) z_{1,2})\,,
\]
and, noting that $S(3,2)=3$ and $S(2,2)=1$, we get $c_3^{SU}\mapsto 2 (z_{0,3}+z_{1,2})$ under $i_{U(2)}^\ast$ followed by (\ref{eq:conjugationmapu2}). Both $z_{0,3}$ and $z_{1,2}$ can be written in terms of $z_{0,1}$, $z_{0,2}$ and $z_{1,1}$, namely $z_{0,3}=(3 z_{0,1}z_{0,2}-z_{0,1}^3)/2$ and $z_{1,2}=z_{0,1}z_{1,1}$. This way one obtains the second column in the lemma. The third column is obtained from the second by setting $c_1$ to zero and noting that $d_2$ becomes an element of order two.
\end{proof}

The entry in the bottom right corner of the table in Lemma \ref{lem:bcomutou2} shows that the $2$-torsion class $d_2$ is the image of the stable class $\lambda_3$.\bigskip

We see that for every $n\geq 2$ the class $d_2$ lifts to a class $d_{2;n} \in H^6(B_{\com}SU(n),\mathbb{Z})$, namely the image of $\lambda_3$ under $H^6(B_{\com}U,\mathbb{Z})\rightarrow H^6(B_{\com}SU(n),\mathbb{Z})$.

\begin{lemma}
For $n\geq 3$ the class $d_{2;n}$ is non-torsion.
\end{lemma}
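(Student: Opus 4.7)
The strategy is to show that the rational image $\lambda_3^{\mathbb{Q}} \in H^6(B_{\com}SU(n), \mathbb{Q})$ of $d_{2;n}$ is nonzero whenever $n \geq 3$; since $d_{2;n}$ is an integral lift of $\lambda_3^{\mathbb{Q}}$, this immediately forces $d_{2;n}$ to be non-torsion.

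By Corollary \ref{cor:lambda} we have $\lambda_3^{\mathbb{Q}} = z_{0,3}$ in $H^\ast(B_{\com}U, \mathbb{Q})$, and this class is represented at each finite level $B_{\com}U(n)$ by the multisymmetric power sum $p_3 = y_1^3 + \cdots + y_n^3$ under the Weyl-invariant description of Section \ref{sec:cohomologybcomg}. I would trace $\lambda_3^{\mathbb{Q}}$ along the canonical maps $B_{\com}SU(n) \to B_{\com}U(n) \to B_{\com}U$, using the naturality of the conjugation map (\ref{eq:conjugationmap1}) with respect to the inclusions $SU(n) \hookrightarrow U(n) \hookrightarrow U$. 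For $G = SU(n)$ the maximal torus $T' = T(n) \cap SU(n)$ has $H^\ast(BT', \mathbb{Q}) = \mathbb{Q}[y_1, \ldots, y_n]/(y_1 + \cdots + y_n)$ with the same Weyl group $\Sigma_n$ acting by permutation, so (\ref{eq:bcomgcohomology2}) yields
\[
H^\ast(B_{\com}SU(n), \mathbb{Q}) \cong \bigl(\mathbb{Q}[\mathbf{x}, \mathbf{y}]/(e_1(\mathbf{x}), e_1(\mathbf{y}))\bigr)^{\Sigma_n} \big/ \bigl(e_2(\mathbf{x}), \ldots, e_n(\mathbf{x})\bigr),
\]
and the pullback of $\lambda_3^{\mathbb{Q}}$ is represented by the class $[p_3]$ in this quotient.

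The remaining step is to verify that $[p_3] \neq 0$. Since $p_3$ involves only the $y$-variables, the substitution $x_i \mapsto 0$ annihilates the ideal $(e_2(\mathbf{x}), \ldots, e_n(\mathbf{x}))$ while leaving $p_3$ intact, reducing the question to whether $y_1^3 + \cdots + y_n^3$ is nonzero in $\mathbb{Q}[y_1, \ldots, y_n]/(y_1 + \cdots + y_n)$. For $n \geq 3$ the evaluation $(y_1, y_2, y_3, y_4, \ldots, y_n) = (2, -1, -1, 0, \ldots, 0)$ lies on the hyperplane $\sum y_i = 0$ yet yields $p_3 = 8 - 1 - 1 = 6 \neq 0$, as required. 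Note that the parallel argument fails for $n = 2$ because $y_2 = -y_1$ forces $y_1^3 + y_2^3 = 0$; this is consistent with the fact, recorded in Lemma \ref{lem:bcomutou2}, that $d_2$ is $2$-torsion on $B_{\com}SU(2)$.

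I expect no serious obstacle here. The only step requiring care is the naturality claim identifying the restriction of $\lambda_3^{\mathbb{Q}}$ with $[p_3]$, which follows from applying diagram (\ref{dgr:htpyorbit}) and its obvious $SU(n)$-analog, together with the fact that $z_{0,3}$ has the same power-sum description in the Weyl-invariant model for each of $B_{\com}U$, $B_{\com}U(n)$ and $B_{\com}SU(n)$.
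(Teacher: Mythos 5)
Your proof is correct, and it reaches the same intermediate reduction as the paper — namely, that it suffices to show $\lambda_3^{\mathbb{Q}}=z_{0,3}$ survives in $H^6(B_{\com}SU(n),\mathbb{Q})$ — but it finishes differently. The paper quotes the identification $H^\ast(B_{\com}SU(n),\mathbb{Q})\cong H^\ast(B_{\com}U(n),\mathbb{Q})/(z_{0,1})$ from Adem--G\'omez and then rules out $z_{0,3}\in(z_{0,1})$ by appealing to Vaccarino's theorem that multisymmetric functions of degree $\leq 3$ satisfy no non-trivial relations when $n\geq 3$. You instead exhibit the non-vanishing by hand: you pass to the Weyl-invariant presentation for $SU(n)$, observe that the specialization $x_i\mapsto 0$ is $\Sigma_n$-equivariant and kills the defining ideal $(e_2(\mathbf{x}),\dots,e_n(\mathbf{x}))$ while fixing $p_3=\sum y_i^3$, and then evaluate at the point $(2,-1,-1,0,\dots,0)$ of the hyperplane $e_1(\mathbf{y})=0$ to get $6\neq 0$. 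This is more elementary and self-contained (and correctly explains why $n=2$ is exceptional), at the cost of having to justify the naturality of the conjugation-map isomorphism for $SU(n)\hookrightarrow U(n)$ — though you could sidestep even that by quoting, as the paper does, that $H^\ast(B_{\com}SU(n),\mathbb{Q})$ is the quotient of $H^\ast(B_{\com}U(n),\mathbb{Q})$ by $(z_{0,1})$, and then running your substitution-plus-evaluation argument on $(\mathbb{Q}[\mathbf{x}]\otimes\mathbb{Q}[\mathbf{y}])^{\Sigma_n}/(J_n+(e_1(\mathbf{y})))$ verbatim. Both arguments are sound; yours trades the external reference for an explicit computation.
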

\begin{proof}
It suffices to show that the rational class $\lambda_3^{\mathbb{Q}}$ is mapped non-trivially to $H^6(B_{\com}SU(n),\mathbb{Q})$ whenever $n\geq 3$. We can identify $\lambda_3^{\mathbb{Q}}$ with the multisymmetric function $z_{0,3}$ (Corollary \ref{cor:lambda}). In \cite[\S 8]{ademcommutativity} the authors show that the map $H^\ast(B_{\com}U(n),\mathbb{Q})\rightarrow H^\ast(B_{\com}SU(n),\mathbb{Q})$ is the quotient map by the ideal generated by $z_{0,1}$. Suppose that $n\geq 3$ and that $z_{0,3}$ is a multiple of $z_{0,1}$ in $H^\ast(B_{\com}U(n),\mathbb{Q})$. Then this lifts to a relation in the ring $(\mathbb{Q}[\mathbf{x}]\otimes \mathbb{Q}[\mathbf{y}])^{\Sigma_{n}}$ expressing $z_{0,3}$ as a polynomial in multisymmetric functions. However, it follows from \cite[Prop.~3.1(2)]{vaccarinomultisymmetric} that for $n\geq 3$ there are no non-trivial relations amongst multisymmetric functions in $(\mathbb{Q}[\mathbf{x}]\otimes \mathbb{Q}[\mathbf{y}])^{\Sigma_{n}}$ which are of degree $\leq 3$ in the variables $\mathbf{x}$ and $\mathbf{y}$. So $z_{0,3}$ is not a multiple of $z_{0,1}$ and therefore $z_{0,3}=\lambda_3^{\mathbb{Q}}$ is mapped non-trivially to $H^6(B_{\com}SU(n),\mathbb{Q})$.
\end{proof}

We summarize:

\begin{corollary} \label{cor:z2stable}
The cohomology classes in $H^\ast(B_{\com}SU(2),\mathbb{Z})$ and $H^\ast(B_{\com}U(2),\mathbb{Z})$ are stable, in the sense that they are pulled back from $H^\ast(B_{\com}U,\mathbb{Z})$. In particular, the $\mathbb{Z}/2\mathbb{Z}$-summands in $H^\ast(B_{\com}SU(2),\mathbb{Z})$ correspond to the $c_2$-multiples of $\lambda_3\in H^6(B_{\com}U,\mathbb{Z})$. For $n\geq 3$ they correspond to non-torsion classes in $H^\ast(B_{\com}SU(n),\mathbb{Z})$.
\end{corollary}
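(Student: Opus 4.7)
The plan is to assemble the three assertions of the corollary from Lemma~\ref{lem:bcomutou2} and the preceding lemma, essentially as a bookkeeping exercise with the presentations (\ref{eq:cohbcomu2}) and (\ref{eq:cohbcomsu2}).

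For the stability claim, I would verify that each generator $c_1, c_2, d_1, d_2$ of $H^\ast(B_{\com}U(2),\mathbb{Z})$ lies in the image of $i_{U(2)}^\ast$. The classes $c_1, c_2$ are tautologically images, and the row for $\lambda_2$ in the table of Lemma~\ref{lem:bcomutou2} immediately yields $d_1$ as a pullback. The class $d_2$ is the only mildly delicate case: the row for $\lambda_3$ gives only $3d_2$ modulo $c_1$-terms as an image, but combining this with the ring relation $2d_2 = c_1 d_1$ from (\ref{eq:cohbcomu2}) lets one write $d_2 = 3d_2 - 2d_2$ as the pullback of a single polynomial in $\lambda_2, \lambda_3, c_1, c_2$ (a short direct computation gives $d_2 = i_{U(2)}^\ast(\lambda_3 + c_1 c_2 - c_1 \lambda_2)$). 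Hence $i_{U(2)}^\ast$ is surjective, and composing with the quotient map in (\ref{eq:cohbcomsu2}) gives surjectivity of $i_{SU(2)}^\ast$ as well.

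For the identification of the $\mathbb{Z}/2\mathbb{Z}$-summands, the presentation (\ref{eq:cohbcomsu2}) says that the torsion subgroup of $H^\ast(B_{\com}SU(2),\mathbb{Z})$ is additively spanned by the classes $c_2^k d_2$ for $k \geq 0$. The bottom-right entry of the table in Lemma~\ref{lem:bcomutou2} reads $i_{SU(2)}^\ast(\lambda_3) = d_2$, so these $\mathbb{Z}/2\mathbb{Z}$-classes are precisely the pullbacks of $c_2^k \lambda_3 \in H^\ast(B_{\com}U,\mathbb{Z})$.

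For the non-torsion assertion when $n \geq 3$, the preceding lemma already covers the case $k = 0$. To extend to general $k$ I would run the same rational argument: under the conjugation-map description (\ref{eq:bcomgcohomology2}) for $SU(n)$, $\lambda_3^{\mathbb{Q}}$ corresponds to the multisymmetric function $z_{0,3}$ (by Corollary~\ref{cor:lambda}) and $c_2$ to a concrete multisymmetric expression, and one shows the product $c_2^k z_{0,3}$ survives in the quotient by the ideal $(z_{0,1})$ via a further application of the multisymmetric independence of \cite[Prop.~3.1(2)]{vaccarinomultisymmetric}. This is the main technical obstacle: the preceding lemma invoked the independence result only in a low-bidegree range, so for general $k$ one must track bidegrees carefully and exploit that, for each fixed $n \geq 3$, sufficiently many independent multisymmetric generators are available to rule out the putative relation. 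The bookkeeping in the first two steps, by contrast, is immediate from the tables already in hand.
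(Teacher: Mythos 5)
Your treatment of the first two assertions is correct and follows exactly the route the paper intends: the corollary is stated as a summary of Lemma \ref{lem:bcomutou2} and the lemma immediately preceding it, with no separate argument. Your explicit identity $d_2=i_{U(2)}^\ast(\lambda_3+c_1c_2-c_1\lambda_2)$ checks out against the table (using $2d_2=c_1d_1$) and supplies the computation behind the surjectivity claim that Lemma \ref{lem:bcomutou2} merely asserts; and reading off from (\ref{eq:cohbcomsu2}) that the torsion of $H^\ast(B_{\com}SU(2),\mathbb{Z})$ is additively spanned by the classes $c_2^kd_2=i_{SU(2)}^\ast(c_2^k\lambda_3)$ is exactly the intended justification of the second sentence.

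The only incomplete point is the last assertion for $k\geq 1$, and you are right to flag it: the lemma preceding the corollary only shows that the image of $\lambda_3$ itself is non-torsion in $H^\ast(B_{\com}SU(n),\mathbb{Z})$ for $n\geq 3$ (the paper is silent about the $c_2$-multiples). However, the route you sketch --- a further application of \cite[Prop.~3.1(2)]{vaccarinomultisymmetric} --- will not close as stated, for precisely the reason you identify: that independence result is only available in a range of degrees bounded by $n$, so for fixed $n$ and large $k$ the class $c_2^kz_{0,3}$ falls outside it. A cleaner way to finish is to show that multiplication by (the image of) $c_2$ is injective on $H^\ast(B_{\com}SU(n),\mathbb{Q})\cong (H^\ast(G/T)\otimes H^\ast(BT))^W$: filter by the $H^\ast(G/T)$-degree and note that the component of $c_2$ in $H^0(G/T)\otimes H^4(BT)$ is the elementary symmetric polynomial $e_2(\mathbf{y})=-\tfrac12 z_{0,2}$, which is a nonzero element of the domain $H^\ast(BT)$; since each $H^{2j}(G/T)\otimes H^\ast(BT)$ is a free $H^\ast(BT)$-module, the lowest filtration component of $c_2\cdot a$ is $e_2(\mathbf{y})\cdot a_j\neq 0$ for $a\neq 0$. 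Combined with $z_{0,3}\neq 0$ from the preceding lemma, this gives $c_2^kz_{0,3}\neq 0$ for all $k\geq 0$ and $n\geq 3$, completing the third assertion.
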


\subsection{Proof of Theorem \ref{thm:hopfring}} \label{sec:proofhopfring}

We consider the ring space up to homotopy $\mathbb{Z}\times B_{\com}U\sslash U$. For $n\in \mathbb{Z}$ we write $X[n]$ for the subspace $\{n\}\times B_{\com}U\sslash U$. The cohomology of $X[n]$ is described in (\ref{eq:cohomologybcomuu}). The multiplicative $H$-space structure restricts to component maps
\begin{equation} \label{eq:circprod}
\mu_{m,n}: X[m]\times X[n]\rightarrow X[mn]
\end{equation}
for all $m,n\in \mathbb{Z}$.

The following lemma describes the effect of (\ref{eq:circprod}) on cohomology groups.
\begin{lemma} \label{lem:mumncoh}
Let $z_{a,b}\in H^\ast(X[mn])$ with $m,n\geq \textnormal{deg}(z_{a,b})=2(a+b)$. Then
\[
\mu_{m,n}^\ast(z_{a,b})=\sum_{i=0}^a \sum_{j=0}^b \binom{a}{i}\binom{b}{j}\, z_{a-i,b-j}\otimes z_{i,j}
\]
where $z_{0,0}\otimes 1:=m$ and $1\otimes z_{0,0}:=n$.
\end{lemma}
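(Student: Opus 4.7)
My plan is to pull back to the conjugation-map model from Section~\ref{sec:cohomologybcomg} and carry out the calculation in terms of multisymmetric polynomials. The map (\ref{eq:cohomologyborel}) identifies $H^\ast(X[N])$ with the ring of $\Sigma_N$-invariants in $\mathbb{Q}[x_1,\dots,x_N]\otimes \mathbb{Q}[y_1,\dots,y_N]$ in such a way that $z_{a,b}$ corresponds to the multisymmetric function $\sum_{k=1}^N x_k^a y_k^b$; this identification is realised by pullback along the conjugation map $\bar{\Phi}$ composed with the shear equivalence of Lemma~\ref{lem:homotopyringf}. Since this pullback is injective, it suffices to verify the claimed identity after pulling back to a product of two copies of $BT$ on each side.

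The second and main step is to identify $\mu_{m,n}$ in this torus-level model. Tensor product of matrices restricts to a group homomorphism $T(m)\times T(n)\to T(mn)$ sending diagonal matrices with eigenvalues $\alpha_i,\beta_j$ to the diagonal matrix with eigenvalues $(\alpha_i\beta_j)_{(i,j)}$, equivariantly with respect to the block embedding $\Sigma_m\times \Sigma_n\hookrightarrow \Sigma_{mn}$. Naturality of the conjugation-map construction under this homomorphism yields a commuting square
\[
\xymatrix{
BT(m)^2\times BT(n)^2 \ar[r] \ar[d] & BT(mn)^2 \ar[d] \\
X[m]\times X[n] \ar[r]^-{\mu_{m,n}} & X[mn]
}
\]
in which the vertical maps are the conjugation equivalences. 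By compatibility of the shear isomorphism with tensor products, as observed in the proof of Lemma~\ref{lem:homotopyringf}, the top arrow is the tensor product $BT(m)\times BT(n)\to BT(mn)$ applied factor-wise to each of the two copies of $BT$.

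In the third step I compute the pullback on generators. Additivity of the first Chern class on tensor products of line bundles implies that the tensor-product map pulls back each variable $y_{ij}\in H^2(BT(mn))$ to $y_i'+y_j''$ and $x_{ij}$ to $x_i'+x_j''$, where primes distinguish the two $BT$-factors. Substituting into $z_{a,b,mn}=\sum_{(i,j)} x_{ij}^a y_{ij}^b$ and applying the binomial theorem,
\[
\mu_{m,n}^\ast(z_{a,b})=\sum_{i=1}^m \sum_{j=1}^n (x_i'+x_j'')^a(y_i'+y_j'')^b = \sum_{p=0}^a \sum_{q=0}^b \binom{a}{p}\binom{b}{q}\,z_{a-p,b-q}\otimes z_{p,q},
\]
which is the claimed formula. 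The boundary conventions $z_{0,0}\otimes 1=m$ and $1\otimes z_{0,0}=n$ come for free from $z_{0,0,N}=\sum_{k=1}^N 1=N$.

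The main obstacle is the second step. One must trace naturality of the conjugation-map construction carefully enough to see that the $x$-variables, which via the shear identification arise from a second copy of $BT$ rather than from natural Chern classes of a single torus, nevertheless transform under tensor product with the same additivity rule as the $y$-variables. Once that diagram is in place, the final substitution is a routine binomial expansion.
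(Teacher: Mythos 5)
Your proof is correct and follows essentially the same route as the paper: restrict along $B_{\com}U(N)\sslash U(N)\rightarrow X[N]$, use the conjugation-map model to reduce $\mu_{m,n}$ to a tensor-product map of tori, and expand binomially. The one imprecision is that $H^\ast(X[N])$ is not literally the ring of $\Sigma_N$-invariants; the restriction map to $H^\ast(B_{\com}U(N)\sslash U(N))$ is only injective in degrees $\leq N$ (by Vaccarino's result on multisymmetric functions), which is precisely where the hypothesis $m,n\geq 2(a+b)$ enters.
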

\begin{proof}
For $k\geq 1$ let $j_k: B_{\com}U(k)\sslash U(k)\rightarrow X[k]$ denote the inclusion into the direct limit $X[k]\simeq \textnormal{colim}_{l\geq k}\, B_{\com}U(l)\sslash U(l)$. The map $j_k^\ast: H^\ast(X[k])\rightarrow H^\ast(B_{\com}U(k)\sslash U(k))$ is then the projection onto the $k$-th component in the inverse limit $H^\ast(X[k])\cong \lim_l\, H^\ast(B_{\com}U(l)\sslash U(l))$. By \cite[Prop. 3.1(2)]{vaccarinomultisymmetric} the map $j_k^\ast$ is injective in cohomological degrees $\ast\leq k$. Consider the diagram
\[
\xymatrix{
B_{\com}U(m)\sslash U(m) \times B_{\com}U(n)\sslash U(n) \ar[r]^-{\mu_{m,n}'\;} \ar[d]^-{j_m\times j_n}	& \,B_{\com}U(mn)\sslash U(mn) \ar[d]^-{j_{mn}} \\
X[m]\times X[n] \ar[r]^-{\mu_{m,n}} 														& X[mn]
}
\]
where $\mu'_{m,n}$ is also induced by tensor product. The diagram commutes up to homotopy, because the product map $\mu_{m,n}$ is induced by extending $\mu_{m,n}'$ over the group-completion.

Both vertical maps are injective in cohomological degree $2(a+b)$ by our assumption that $m,n\geq \textnormal{deg}(z_{a,b})$. Thus the formula displayed in the lemma can be checked by computing $(\mu_{m,n}')^\ast(z_{a,b,mn})$ instead, where $z_{a,b,mn}$ was defined in (\ref{eq:generatorzabn}). Because the composite in the middle row of diagram (\ref{dgr:htpyorbit}) is multiplicative with respect to tensor product, this amounts to computing the pullback of the multisymmetric function $z_{a,b,mn}$ under the map
\[
BT(m)\times BT(m)\times BT(n)\times BT(n)\xrightarrow{(\otimes\times \otimes)\circ (1\times \tau\times 1)} BT(mn)\times BT(mn)\,,
\]
where $\tau$ is the transposition of the two factors in the middle. It is readily checked that this yields the formula displayed in the lemma.
\end{proof}

Note that, even though $H^\ast(X[mn])\cong H^\ast(X[0])$, the right hand side of the above formula depends on $m,n$. We now dualise to obtain the corresponding formula for homology.
\begin{lemma} \label{lem:mumnho}
Let $\zeta_{a,b}\in H_\ast(X[m])$ and $\zeta_{c,d}\in H_\ast(X[n])$ and assume that $m,n\geq \textnormal{deg}(\zeta_{a,b})+\textnormal{deg}(\zeta_{c,d})$. Then
\[
(\mu_{m,n})_\ast(\zeta_{a,b}\otimes \zeta_{c,d})=\binom{a+c}{a}\binom{b+d}{b}\, \zeta_{a+c,b+d}+mn\, \zeta_{a,b} \zeta_{c,d}\, .
\]
\end{lemma}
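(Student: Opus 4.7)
The plan is to dualize Lemma \ref{lem:mumncoh} against the natural pairing between $H^\ast(X[mn])$ and $H_\ast(X[mn])$. By Lemma \ref{lem:homologyhopfalgebra} the homology is a polynomial Hopf algebra on the primitive generators $\zeta_{a',b'}$, so $(\mu_{m,n})_\ast(\zeta_{a,b}\otimes \zeta_{c,d})$ admits a unique expansion in the monomial basis of products of $\zeta_{\bullet}$'s. It therefore suffices to compute its pairing against every monomial $z_{e_1,f_1}\cdots z_{e_r,f_r}\in H^\ast(X[mn])$ and check it agrees with the pairing of the claimed right-hand side. The hypothesis $m,n\geq \deg(\zeta_{a,b})+\deg(\zeta_{c,d})$ is exactly what is needed so that Lemma \ref{lem:mumncoh} applies to every $z_{e,f}$ that appears.

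First, the primitive part. By adjunction and Lemma \ref{lem:mumncoh},
\[
\langle(\mu_{m,n})_\ast(\zeta_{a,b}\otimes \zeta_{c,d}),\, z_{e,f}\rangle = \sum_{i,j}\binom{e}{i}\binom{f}{j}\,\langle \zeta_{a,b}, z_{e-i,f-j}\rangle\,\langle\zeta_{c,d}, z_{i,j}\rangle.
\]
Primitivity of $\zeta_{a,b}$ and $\zeta_{c,d}$, combined with the convention $z_{0,0}\mapsto m$ or $n$ (which makes the boundary terms pair trivially against the positive-degree classes $\zeta_{a,b},\zeta_{c,d}$), reduces the sum to the single index $(i,j)=(c,d)$, $(e-i,f-j)=(a,b)$, yielding $\binom{a+c}{a}\binom{b+d}{b}$ when $(e,f)=(a+c,b+d)$ and $0$ otherwise. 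As decomposable elements pair trivially against primitives, this exactly matches the pairing of the primitive part $\binom{a+c}{a}\binom{b+d}{b}\zeta_{a+c,b+d}$ of the claimed formula.

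Next, pairing against a two-factor product $z_{e,f}z_{g,h}$ with $(e,f),(g,h)\neq (0,0)$: using that $\mu_{m,n}^\ast$ is a ring map, the pairing equals $\langle\zeta_{a,b}\otimes \zeta_{c,d},\;\mu_{m,n}^\ast(z_{e,f})\mu_{m,n}^\ast(z_{g,h})\rangle$, which after expanding both factors by Lemma \ref{lem:mumncoh} splits into a sum of products of two one-variable pairings of a primitive class against a two-factor cohomology product. Primitivity of $\zeta_{a,b}$ forces $\langle \zeta_{a,b}, z_\alpha z_\beta\rangle$ to vanish unless exactly one of $z_\alpha,z_\beta$ is the scalar $z_{0,0}$ and the other is $z_{a,b}$; the analogous statement holds for $\zeta_{c,d}$. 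A short case analysis, tracking the $m$'s and $n$'s coming from the $z_{0,0}$-convention, shows the only surviving indices satisfy $\{(e,f),(g,h)\}=\{(a,b),(c,d)\}$, each configuration contributing $mn$. The degenerate case $(a,b)=(c,d)$ produces $2mn$, matching $\langle mn\,\zeta_{a,b}^2, z_{a,b}^2\rangle = 2mn$ computed from $\Delta(z_{a,b}^2)=z_{a,b}^2\otimes 1+2z_{a,b}\otimes z_{a,b}+1\otimes z_{a,b}^2$.

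Finally, to rule out higher-product terms, for any $r\geq 3$ and nonzero $(e_s,f_s)$ I expand the pairing against $z_{e_1,f_1}\cdots z_{e_r,f_r}$ analogously. Primitivity of $\zeta_{a,b}$ forces at least $r-1$ of the first-slot factors in the expansion to be $z_{0,0}$, and primitivity of $\zeta_{c,d}$ forces at least $r-1$ of the second-slot factors to be $z_{0,0}$. By pigeonhole some index $s$ carries $z_{0,0}$ in both slots simultaneously, forcing $(e_s,f_s)=(0,0)$, a contradiction. Hence the pairing vanishes, no higher-product terms occur, and the formula follows. The only real hurdle is the careful bookkeeping in the two-factor step, particularly the degenerate case $(a,b)=(c,d)$.
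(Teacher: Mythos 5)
Your proposal is correct and follows essentially the same route as the paper: dualize Lemma \ref{lem:mumncoh} by pairing $(\mu_{m,n})_\ast(\zeta_{a,b}\otimes\zeta_{c,d})$ against monomials in the $z_{e,f}$ and observe that only $z_{a+c,b+d}$ and $z_{a,b}z_{c,d}$ survive, with the same treatment of the degenerate case $(a,b)=(c,d)$ via $\langle \zeta_{a,b}^2, z_{a,b}^2\rangle=2$. Your explicit pigeonhole argument for monomials with $r\geq 3$ factors just spells out what the paper compresses into the observation that the pairing is nonzero iff $z_{a,b}\otimes z_{c,d}$ appears in $\mu_{m,n}^\ast(z^I)$.
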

\begin{proof}
Let $z^{I}$ be any monomial in the cohomology algebra of $X[mn]$ of total degree $2(a+b+c+d)$. Then
\[
\langle z^I, (\mu_{m,n})_\ast(\zeta_{a,b}\otimes \zeta_{c,d})\rangle=\langle \mu_{m,n}^\ast(z^I),\zeta_{a,b}\otimes \zeta_{c,d}\rangle
\]
is non-zero if and only if the term $z_{a,b}\otimes z_{c,d}$ appears in $\mu_{m,n}^\ast(z^I)$. By Lemma \ref{lem:mumncoh} (this uses the assumption on $m,n$) this happens if either $z^I=z_{a+c,b+d}$ or if $z^I=z_{a,b}z_{c,d}$. In the first case, the pairing evaluates to the product of binomial coefficients and, since $z_{a+c,b+d}$ is dual to $\zeta_{a+c,b+d}$, this gives the first term in the claimed formula. In the latter case we have
\[
\begin{split}
\mu_{m,n}^\ast(z_{a,b})\mu_{m,n}^\ast(z_{c,d})&=mn\,(z_{a,b}\otimes z_{c,d} + z_{c,d}\otimes z_{a,b})\\&\quad+\textnormal{terms evaluating to zero on }\zeta_{a,b}\otimes \zeta_{c,d}\, .
\end{split}
\]
If $(a,b)\neq (c,d)$, then this evaluates to $mn$ on $\zeta_{a,b}\otimes \zeta_{c,d}$, and because $z_{a,b}z_{c,d}$ is dual to $\zeta_{a,b}\zeta_{c,d}$, we are done. If $(a,b)=(c,d)$, then the pairing evaluates to $2mn$, but in this case $z_{a,b}^2$ is dual to $2\,\zeta_{a,b}^2$, and again we arrive at the displayed formula.
\end{proof}

We can now finish the proof of Theorem \ref{thm:hopfring}. We have an isomorphism of Pontrjagin algebras
\begin{equation} \label{eq:pontrzbcomu}
H_\ast(\mathbb{Z}\times B_{\com}U\sslash U)\cong \mathbb{Q}[\mathbb{Z}]\otimes_{\mathbb{Q}} \mathbb{Q}[\zeta_{a,b}\,|\, (a,b)\neq (0,0)]\,,
\end{equation}
where $\mathbb{Q}[\mathbb{Z}]$ is the rational group ring of the additive group of integers. Denote the coproduct by $\Delta$. If $[n]\in H_0(\mathbb{Z})$ denotes the homology class determined by $n\in \mathbb{Z}$, then the coproduct in $\mathbb{Q}[\mathbb{Z}]$ is given by $\Delta([n])=[n]\otimes [n]$. Furthermore, because $\mathbb{Z}$ is a ring, the group ring $\mathbb{Q}[\mathbb{Z}]$ is a Hopf ring with $[m]\circ [n]=[mn]$.

Let us write $i_n: X[n]\rightarrow \mathbb{Z}\times B_{\com}U\sslash U$ for the inclusion of the $n$-th component. Then $i_n$ sends the class $\zeta_{a,b}\in H_\ast(X[n])$ to the class $[n]\otimes \zeta_{a,b}$ under the isomorphism (\ref{eq:pontrzbcomu}).

Now let $\zeta_{a,b},\zeta_{c,d}\in H_\ast(B_{\com}U\sslash U)$ be given and choose $m,n\geq \textnormal{deg}(\zeta_{a,b})+\textnormal{deg}(\zeta_{c,d})$. The commutative diagram
\[
\xymatrix{
X[m]\times X[n] \ar[r]^-{\mu_{m,n}} \ar[d]^-{i_m\times i_n}	& X[mn] \ar[d]^-{i_{mn}}	\\
(\mathbb{Z}\times B_{\com}U\sslash U) \times (\mathbb{Z}\times B_{\com}U\sslash U) \ar[r]^-{\otimes} & \,\mathbb{Z}\times B_{\com}U\sslash U
}
\]
together with Lemma \ref{lem:mumnho} yields the relation
\[
\begin{split}
([m]\otimes \zeta_{a,b})\circ ([n]\otimes \zeta_{c,d})&=\binom{a+c}{c} \binom{b+d}{b}\, [mn]\otimes \zeta_{a+c,b+d}\\&\quad\quad\quad +mn\, [mn]\otimes (\zeta_{a,b}\zeta_{c,d})
\end{split}
\]
in $H_\ast(\mathbb{Z}\times B_{\com}U\sslash U)$. Simple formal manipulations in the Hopf ring, which we learned from \cite{wilsonhopfringalgtop}, turn this into
\[
([0]\otimes \zeta_{a,b})\circ ([0]\otimes \zeta_{c,d})=\binom{a+c}{c} \binom{b+d}{b}\, [0]\otimes \zeta_{a+c,b+d}\,.
\]
This gives us the presentation of $[0]\otimes \zeta_{a,b}$ asserted in the theorem.

\section{The commutative $K$-theory of $S^4$} \label{sec:kcoms4}

We now give a description of the group $\tilde{K}_{\com}(S^4)\cong \mathbb{Z}\oplus\mathbb{Z}$. We will show that it is generated by rank-$2$ bundles, in the sense that the natural map
\[
\pi_4(B_{\com}SU(2))\rightarrow \pi_4(B_{\com}U)
\]
is an isomorphism.

We first fix an orientation for $S^4$. Let $[S^4]\in H_4(S^4,\mathbb{Z})$ be the image of the fundamental class of $\mathbb{C}P^2$ under the quotient map $\mathbb{C}P^2\rightarrow \mathbb{C}P^2/\mathbb{C}P^1\cong S^4$. The following lemma allows us to work with cohomology.

\begin{lemma} \label{lem:pi4bcomsu2}
There is an isomorphism
\[
\pi_4(B_{\com}SU(2)) \cong \Hom(H^4(B_{\com}SU(2),\mathbb{Z}),\mathbb{Z})
\]
that takes $f\mapsto (a\mapsto \langle f^\ast(a),[S^4]\rangle)$.
\end{lemma}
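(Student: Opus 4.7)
My plan is to factor the map of the lemma as the composite of the Hurewicz homomorphism $\pi_4(B_{\com}SU(2)) \to H_4(B_{\com}SU(2),\mathbb{Z})$ with the Kronecker evaluation $H_4(B_{\com}SU(2),\mathbb{Z}) \to \Hom(H^4(B_{\com}SU(2),\mathbb{Z}),\mathbb{Z})$. By naturality, a homotopy class $f: S^4 \to B_{\com}SU(2)$ is sent to $f_\ast[S^4]$ and then to $\bigl(a \mapsto \langle a, f_\ast[S^4]\rangle\bigr) = \bigl(a \mapsto \langle f^\ast(a), [S^4]\rangle\bigr)$, which is exactly the formula stated. It therefore suffices to prove that both arrows are isomorphisms.

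From the presentation in (\ref{eq:cohbcomsu2}) one has $H^\ast(B_{\com}SU(2),\mathbb{Z}) \cong \mathbb{Z}[c_2, d_1, d_2]/(2d_2, d_1^2, d_1 d_2, d_2^2)$ with $|c_2| = |d_1| = 4$ and $|d_2| = 6$, so $H^i(B_{\com}SU(2),\mathbb{Z}) = 0$ for $i \in \{1,2,3,5\}$ and $H^4(B_{\com}SU(2),\mathbb{Z}) \cong \mathbb{Z}\langle c_2\rangle \oplus \mathbb{Z}\langle d_1\rangle$ is free of rank two. Applying the universal coefficient theorem inductively in degrees $1, 2, 3$ (the Hom part forces torsion-freeness in each degree while the Ext part forces torsion, hence vanishing) gives $H_i(B_{\com}SU(2),\mathbb{Z}) = 0$ for $1 \leq i \leq 3$, and the vanishing $H^5 = 0$ forces $\Ext(H_4, \mathbb{Z}) = 0$, whence $H_4(B_{\com}SU(2),\mathbb{Z}) \cong \mathbb{Z}^2$ is free. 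Separately, $B_{\com}SU(2)$ is simply connected: its simplicial $1$-skeleton is the suspension of the path-connected space $\Hom(\mathbb{Z}, SU(2)) = SU(2)$, which is simply connected, and attaching higher simplices can only kill further elements of $\pi_1$.

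Combining these facts, $B_{\com}SU(2)$ is $3$-connected (by applying Hurewicz successively in degrees $2, 3$), so the Hurewicz map in degree $4$ is an isomorphism $\pi_4(B_{\com}SU(2)) \xrightarrow{\sim} H_4(B_{\com}SU(2),\mathbb{Z})$. Since $H_3 = 0$ and $H_4$ is finitely generated free, the Kronecker evaluation $H_4 \to \Hom(H^4,\mathbb{Z})$ is also an isomorphism, and the lemma follows. The only step requiring extrinsic input is the simple connectivity of $B_{\com}SU(2)$, but this is automatic from the simplicial model; everything else is a direct consequence of the already-computed integral cohomology ring, so I do not anticipate serious obstacles.
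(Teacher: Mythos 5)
Your proof is correct and follows the same route as the paper's, which simply invokes Hurewicz' theorem and universal coefficients together with the two facts (cited from Adem--G\'omez) that $B_{\com}SU(2)$ is $3$-connected and $H_4(B_{\com}SU(2),\mathbb{Z})$ is torsion-free. The only difference is that you re-derive these inputs yourself---$3$-connectivity from the skeletal filtration plus the vanishing of $H^i$ for $i\leq 3$ read off from the presentation (\ref{eq:cohbcomsu2}), and freeness of $H_4$ from $H^5=0$ via universal coefficients---which is a legitimate, self-contained substitution for the citation.
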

\begin{proof}
This follows from Hurewicz' theorem and universal coefficients, because $B_{\com}SU(2)$ is $3$-connected and $H_4(B_{\com}SU(2),\mathbb{Z})$ is torsionfree. Both these facts were shown in \cite{ademcommutativity}.
\end{proof}

As a basis for the cohomology group $H^4(B_{\com}SU(2),\mathbb{Z})\cong \mathbb{Z}^2$ we use $c_2$ and $b:=d_1-2c_2$ in the notation of (\ref{eq:cohbcomu2}). Let $c_2^\ast$ and $b^\ast$ be their $\mathbb{Z}$-linear duals.

\paragraph{Construction of $c_2^\ast$.} We define an element $v\in \pi_4(B_{\com}SU(2))$ as follows. The simplicial $1$-skeleta of $BSU(2)$ and $B_{\com}SU(2)$ agree and are both given by $\Sigma SU(2)$. We have that $\Sigma SU(2)\cong S^4$. Thus the inclusion of the simplicial $1$-skeleton into $BSU(2)$ can be factored through a map
\[
v: S^4\cong \Sigma SU(2)\longrightarrow B_{\com}SU(2)\,,
\]
The composition $i\circ v$ is $6$-connected, because the homotopy fibre of $i\circ v: \Sigma SU(2)\rightarrow BSU(2)$ is equivalent to the simplicial $1$-skeleton of $ESU(2)$, which is equivalent to the join $SU(2)\ast SU(2)\cong S^7$. In particular, $(i\circ v)^\ast(c_2)=v^\ast(c_2)$ is a generator of $H^4(S^4,\mathbb{Z})$. By choosing the identification $S^4\cong \Sigma SU(2)$ we can arrange for $v^\ast(c_2)=1$.

We now claim that $v^\ast(b)=0$. To see this, we use the commutative diagram
\begin{equation} \label{dgr:bcomsu2}
\xymatrix{
SU(2)/T\times \Sigma T \ar[r]^-{\textnormal{incl.}} \ar[d]^-{\times 2}	& \;SU(2)/T\times BT \ar[d]^-{\bar{\varphi}}		\\
\Sigma SU(2) \ar[r]^-{v}								& B_{\com}SU(2)
}
\end{equation}
where $T\leqslant SU(2)$ is a maximal torus, and $\bar{\varphi}$ is the conjugation map (\ref{eq:conjugationmap1}) before taking Weyl group orbits. The left hand vertical arrow is the restriction of $\bar{\varphi}$ to the simplicial $1$-skeleta. It is a map of degree $2$ (the order of the Weyl group $W=\Sigma_2$). We now appeal to the proof of Lemma \ref{lem:bcomutou2} from which it follows that $d_1\mapsto 4z_{1,1}$ and $2c_2\mapsto 4z_{1,1}-2z_{0,2}$ under $\bar{\varphi}^\ast$ (the extra factor of $2$ comes from changing tori from $T(2)\leqslant U(2)$ to $T\leqslant SU(2)$). Thus, $d_1-2c_2\mapsto 0$ in the top left corner of the diagram, because $z_{0,2}$ lands in $H^4(\Sigma T,\mathbb{Q})=0$. Then also $v^\ast(b)=v^\ast(d_1-2c_2)=0$, because the left vertical map is injective on cohomology. \bigskip

The construction of $b^\ast$ seems more complicated. We expect that this class is related to an element in the second homotopy group of the space $\Hom(\mathbb{Z}^2,SU(2))/SU(2)\vee SU(2)$. For the purpose of this section, however, it is enough to define a representative for a multiple of $b^\ast$. Consider the map
\begin{equation} \label{eq:cp2tobcomsu2}
\mathbb{C}P^2\xrightarrow{\textnormal{incl.}} \mathbb{C}P^\infty\simeq BT\xrightarrow{\textnormal{incl.}} B_{\com}SU(2)\, .
\end{equation}
As noted earlier, the space $B_{\com}SU(2)$ is $3$-connected, so this composite map factors up to homotopy through the quotient space $\mathbb{C}P^2/\mathbb{C}P^1\simeq S^4$. Let $w$ denote the induced map on $S^4$. The homotopy class of $w$ is determined by that of (\ref{eq:cp2tobcomsu2}), because we are collapsing the $2$-dimensional complex $\mathbb{C}P^1$ inside a $3$-connected space. By our choice of orientation of $S^4$ the quotient map $\mathbb{C}P^2\rightarrow S^4$ has degree $1$.

To check the effect of $w$ on cohomology, we can use the composition of the map $\bar{\varphi}$ in the diagram above with the inclusion $BT\rightarrow SU(2)/T\times BT$ of the second factor. We find $w^\ast(c_2)=-1$ and $w^\ast(b)=2$. Thus, $w$ corresponds to $-c_2^\ast+2b^\ast$ under the isomorphism in Lemma \ref{lem:pi4bcomsu2}. Consequently, $v$ and $(v+w)/2$ freely generate $\pi_4(B_{\com}SU(2))$.

\begin{proposition} \label{prop:kcoms4}
The natural map $\pi_4(B_{\com}SU(2))\rightarrow \pi_4(B_{\com}U)$ induced by $SU(2)\hookrightarrow U$ is an isomorphism. As a consequence, the map $B_{\com}U(2)\rightarrow B_{\com}U$ is $4$-connected.
\end{proposition}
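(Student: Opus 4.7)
The plan is to compare the two free abelian groups $\pi_4(B_{\com}SU(2))$ and $\pi_4(B_{\com}U)$, both of rank two, by detecting them with the integral cohomology classes $c_2$ and $\lambda_2$. We have $\pi_4(B_{\com}U) \cong \mathbb{Z}^2$ by Corollary \ref{cor:homotopygroups}; moreover, the splitting of Theorem \ref{thm:splitting} identifies $\pi_4(B_{\com}U) \cong \pi_4(BU) \oplus \pi_4(BU\langle 4\rangle)$, where the first summand is detected by $c_2$ (pulled back from $BU$) and the second by $\lambda_2 = \tilde c_2 \circ \pi_2$. Since $|c_2(\beta_2)| = 1$ on the Bott generator of $\pi_4(BU)$, and $\tilde c_2 = -c_2$ by Definition \ref{def:lambda} evaluates to $\pm 1$ on the generator of $\pi_4(BU\langle 4\rangle)$, the pair $(c_2, \lambda_2)$ induces an isomorphism $\pi_4(B_{\com}U) \xrightarrow{\cong} \mathbb{Z}^2$.

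Next, I compute these two integer invariants on $i_{SU(2)} \circ v$ and $i_{SU(2)} \circ w$ by factoring through $B_{\com}SU(2)$. Lemma \ref{lem:bcomutou2} gives $i_{SU(2)}^\ast c_2 = c_2$ and $i_{SU(2)}^\ast \lambda_2 = d_1 - 2c_2 = b$ in $H^4(B_{\com}SU(2), \mathbb{Z})$. Combined with the values $v^\ast c_2 = 1$, $v^\ast b = 0$, $w^\ast c_2 = -1$, $w^\ast b = 2$ established in the text above, we find that $(i_{SU(2)})_\ast v$ and $(i_{SU(2)})_\ast w$ correspond to $(1,0)$ and $(-1,2)$ in $\mathbb{Z}^2 \cong \pi_4(B_{\com}U)$. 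Hence the generators $v$ and $(v+w)/2$ of $\pi_4(B_{\com}SU(2))$ map to $(1,0)$ and $(0,1)$, a $\mathbb{Z}$-basis of $\pi_4(B_{\com}U)$, proving the first claim.

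For the $4$-connectedness of $B_{\com}U(2) \to B_{\com}U$, I check each homotopy dimension. Surjectivity on $\pi_4$ is immediate, since the composition $B_{\com}SU(2) \to B_{\com}U(2) \to B_{\com}U$ is a $\pi_4$-isomorphism by the first part. On $\pi_2$, both groups are $\mathbb{Z}$ detected by $c_1$ (using the presentation \eqref{eq:cohbcomu2} and the splitting of Theorem \ref{thm:splitting}), and the map fixes $c_1$ by Lemma \ref{lem:bcomutou2}. Finally, to prove $\pi_3(B_{\com}U(2)) = 0$ (matching $\pi_3(B_{\com}U) = 0$ from Corollary \ref{cor:homotopygroups}), I invoke the long exact sequence of the determinant fibration $B_{\com}SU(2) \to B_{\com}U(2) \to BU(1)$ of Lemma \ref{lem:detfibrebcomu}: since $B_{\com}SU(2)$ is $3$-connected (cf. Lemma \ref{lem:pi4bcomsu2}) and $\pi_3(BU(1)) = 0$, the group $\pi_3(B_{\com}U(2))$ is squeezed between zeros. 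The main care point throughout is keeping track of orientation and sign conventions when converting between the $(c_2, \lambda_2)$-pairing and the splitting of $\pi_4(B_{\com}U)$, but this reduces to a $2 \times 2$ determinant calculation whose value is $\pm 1$.
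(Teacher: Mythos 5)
Your proposal is correct and follows essentially the same route as the paper: both detect $\pi_4(B_{\com}U)\cong\mathbb{Z}\oplus\mathbb{Z}$ by evaluating $c_2$ and $\lambda_2$, compute these on $i_{SU(2)}\circ v$ and $i_{SU(2)}\circ w$ via Lemma \ref{lem:bcomutou2} and the values $v^\ast(c_2)=1$, $v^\ast(b)=0$, $w^\ast(c_2)=-1$, $w^\ast(b)=2$, and then deduce the connectivity statement from the Hurewicz argument on $\pi_2$ together with the determinant fibration of Lemma \ref{lem:detfibrebcomu}. The only cosmetic difference is that you work in $(c_2,\lambda_2)$-coordinates rather than writing the images explicitly as $-uy_1$ and $uy_1+2y_2$ in the $\{uy_1,y_2\}$-basis, which amounts to the same $2\times 2$ determinant check.
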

\begin{proof}
The proof is a computation of characteristic classes. Let us write $i_{SU(2)}: B_{\com}SU(2)\rightarrow B_{\com}U$ for the inclusion. We have the following maps,
\[
\xymatrix{
S^4 \ar[r]^-{v,\,w}	& B_{\com}SU(2) \ar[r]^-{i_{SU(2)}\;}	& B_{\com}U \ar[d]^-{i} \ar[r]^-{\lambda_2}	& K(\mathbb{Z},4) \\
					& 							& BU	\,.							&
}
\]
The group $\pi_4(BU)\cong \mathbb{Z}$ is generated by the square of the Bott class $u^2$ which has second Chern class $-1$. The group $\pi_4(B_{\com}U)\cong \mathbb{Z}\oplus \mathbb{Z}$ is generated by the classes $uy_1$ and $y_2$. The maps $i_\ast$ and $(\lambda_2)_{\ast}$ induced on homotopy groups act as the projections onto the two $\mathbb{Z}$-factors, that is, $i_\ast(uy_1)=u^2$, $(\lambda_2)_\ast(uy_1)=0$, and $i_\ast(y_2)=0$, $(\lambda_2)_\ast(y_2)=1$. As elements of $\pi_4(B_{\com}U)$ we can write $i_{SU(2)}\circ v=\alpha uy_1+\beta y_2$ for some $\alpha ,\beta\in \mathbb{Z}$. To determine $\alpha$ we compute
\[
(i\circ i_{SU(2)}\circ v)^\ast(c_2)=v^\ast(c_2)=1\,,
\]
hence $\alpha=-1$ (because $uy_1$ corresponds to $u^2$ which has $c_2(u^2)=-1$). For $\beta$ we use the table in Lemma \ref{lem:bcomutou2} which shows that
\[
(i_{SU(2)}\circ v)^\ast(\lambda_2)=v^\ast(-2c_2+d_1)=v^\ast(b)=0\,,
\]
so $\beta=0$. This shows that $i_{SU(2)}\circ v=-uy_1$ in $\pi_4(B_{\com}U)$. In a similar way one shows that $i_{SU(2)}\circ w=2y_2+uy_1$. Hence $i_{SU(2)}\circ (v+w)/2=y_2$ and the first assertion follows.

For the second statement note that both $B_{\com}U(2)$ and $B_{\com}U$ are simply connected, and that the natural map $B_{\com}U(2)\rightarrow B_{\com}U$ is an isomorphism on homotopy in dimension $2$ by Hurewicz' theorem and Lemma \ref{lem:bcomutou2}. By Lemma \ref{lem:detfibrebcomu}, the map $B_{\com}SU(2)\rightarrow B_{\com}U(2)$ induces ismorphisms on homotopy groups in dimensions $\geq 3$. The second assertion follows now, because $B_{\com}SU(2)\rightarrow B_{\com}U$ is an isomorphism on homotopy groups in dimension $3$ (trivally) and $4$ (by the first statement), and a surjection in dimension $5$ (because $\pi_5(B_{\com}U)=0$).
\end{proof}

\begin{remark}
Let $h: SU(2)/T\times \Sigma T\rightarrow B_{\com}SU(2)$ be the composite map through the top right corner of (\ref{dgr:bcomsu2}). In \cite[Ex. 2.5]{ademcommutativity} the authors construct the classifying map of a transitionally commutative $SU(2)$-bundle over $S^4$ by factoring this map through the smash product
\[
\tilde{h}: S^4\cong SU(2)/T\wedge \Sigma T\rightarrow B_{\com}SU(2)\,,
\]
using the fact that $B_{\com}SU(2)$ is $3$-connected. With the results of this section we can easily determine the class represented by $\tilde{h}$ in $\tilde{K}_{\com}(S^4)$. We have seen above that $h^\ast(c_2)=\pm 2$ and that $h^\ast(b)=0$. Thus $\tilde{h}=\pm 2v$ as an element of $\pi_4(B_{\com}SU(2))$. By Proposition \ref{prop:kcoms4}, then $\tilde{h}=\pm 2 uy_1$ in $\tilde{K}_{\com}(S^4)$ (the sign can be fixed by choosing orientations).
\end{remark}

We finish this section by explaining how the class $y_2\in \tilde{K}_{\com}(S^4)$ arises from the difference of the tautological bundle on $\mathbb{C}P^2$ thought of as a transitionally commutative bundle on the one hand, and as an ordinary line bundle on the other hand:

Recall the spectrum $F\simeq ku\vee \Sigma^2 ku$ from Section \ref{sec:type}. We have a map $\ell: \Sigma^\infty BU(1)\rightarrow F$ adjoint to the canonical map $BU(1)\rightarrow BU\simeq \Omega^\infty \Sigma^2 ku$. Similarly we have a map $\ell': \Sigma^\infty BU(1)\rightarrow E$ corresponding to the inclusion $BU(1)\rightarrow B_{\com}U$. Now let $s: F\rightarrow E$ be the splitting defined in Remark \ref{rem:splittingmap}. Under the equivalences $F\simeq ku\wedge S^2_+$ and $E\simeq ku\wedge BU(1)_+$ it is the map induced by the canonical map $S^2\rightarrow BU(1)$. We claim that the restriction of the difference class
\[
\ell'-s\circ \ell\in \tilde{K}_{\com}(BU(1))
\]
along the inclusion $j: \mathbb{C}P^2\hookrightarrow \mathbb{C}P^\infty\simeq BU(1)$ descends to a unique class on $S^4$ which represents the generator $y_2\in \tilde{K}_{\com}(S^4)$.

The cofibre sequence $\mathbb{C}P^1\hookrightarrow \mathbb{C}P^2\stackrel{q}{\longrightarrow} S^4$ yields an exact sequence
\[
0=\tilde{K}_{\com}(\Sigma \mathbb{C}P^1)\longrightarrow \tilde{K}_{\com}(S^4)\stackrel{q^\ast}{\longrightarrow} \tilde{K}_{\com}(\mathbb{C}P^2)\longrightarrow \tilde{K}_{\com}(\mathbb{C}P^1)\,,
\]
which shows that a class in the kernel of the restriction map to $\mathbb{C}P^1$ comes from a unique class in $\tilde{K}_{\com}(S^4)$. Since $\tilde{K}_{\com}(\mathbb{C}P^1)\cong \tilde{K}(\mathbb{C}P^1)$, the image of $\ell'-s\circ \ell$ in $\tilde{K}_{\com}(\mathbb{C}P^1)$ is zero. Thus there exists a unique $y\in \tilde{K}_{\com}(S^4)$ so that $q^\ast(y)=j^\ast(\ell'-s\circ \ell)$. By construction, $y$ lies in the kernel of the projection map $\tilde{K}_{\com}(S^4)\rightarrow \tilde{K}(S^4)$, so its component in the `$uy_1$-direction' is zero. To determine the `$y_2$-component` we note that the cohomology class $\lambda_2$ can be computed from the map of spectra
\[
ku\wedge BU(1)_+\simeq \bigvee_{n\geq 0} \Sigma^{2n}ku\rightarrow  \Sigma^4ku\rightarrow \Sigma^4 H\mathbb{Z}\,,
\]
where $ku\rightarrow H\mathbb{Z}$ is the standard map. That is, we regard $\lambda_2$ as a natural map $\tilde{K}_{\com}(-)\rightarrow \tilde{H}^4(-,\mathbb{Z})$. Since $s: F\rightarrow E$ only hits the wedge summands for $n=0,1$ this shows that $\lambda_2(s\circ \ell)=0$ in $\tilde{H}^4(BU(1),\mathbb{Z})$. Furthermore, we see (\emph{e.g.} from the proof of Lemma \ref{lem:bcomutou2}) that $\lambda_2(\ell')$ is the canonical generator of $\tilde{H}^4(BU(1),\mathbb{Z})$. By naturality, then $\lambda_2(y)$ is a generator of $\tilde{H}^4(S^4,\mathbb{Z})$. This shows that $y$ and $y_2$ agree as elements of $\tilde{K}_{\com}(S^4)$.

\appendix

\section{The map $B_{\com}G_{\mathds{1}}\rightarrow BG$ on rational cohomology} \label{sec:modulestructure}

Recall the definition of $B_{\com}G_{\mathds{1}}$ from Section \ref{sec:cohomologybcomg}. The inclusion map $i: B_{\com}G_{\mathds{1}}\rightarrow BG$ induces a ring homomorphism
\[
i^\ast: H^\ast(BG)\rightarrow H^{\ast}(B_{\com}G_{\mathds{1}})\,,
\]
which gives $H^\ast(B_{\com}G_{\mathds{1}})$ the structure of a $H^\ast(BG)$-module \cite{ademcommutativity}. When rational coefficients are used, the map $i^\ast$ can be completely described by means of the rational isomorphism (\ref{eq:bcomgcohomology}) and a general observation about the conjugation map that we shall now explain.

Let $G$ be a compact Lie group and $H\leqslant G$ an abelian, closed subgroup. Let $BH$ and $BG$ denote the bar construction, and let $j: BH\rightarrow BG$ be the map induced by the inclusion $H\hookrightarrow G$. Let us regard $G/H$ as a constant simplicial space. Recall that a map
\begin{equation} \label{eq:conjugationmap}
\alpha: G/H\times BH\rightarrow BG
\end{equation}
can be defined simplicialwise by letting
\[
(gH,h_1,\dots,h_k)\stackrel{\alpha}{\longmapsto} (gh_1g^{-1},\dots,gh_kg^{-1})
\]
on $k$-simplices.

In the following proposition we give an alternative description of the map (\ref{eq:conjugationmap}) up to homotopy, and determine its homotopy fibre. Let $EH\times_H G$ be the one-sided bar construction for the action of $H$ on $G$ by left-translation. Then we have the standard homotopy fibre sequence
\[
EH\times_{H}G \stackrel{l}{\longrightarrow} BH \stackrel{j}{\longrightarrow} BG\,,
\]
where $l$ is the evident projection map. Note that $EH\times_H G\simeq G/H$. The equivalence is $(h_1,\dots,h_k\,|\,g)\mapsto g^{-1}H$ on $k$-simplices. By abuse of notation, we shall also write $l: G/H\rightarrow BH$ for the map obtained by implicitly inverting this equivalence.

Since $H$ is assumed abelian, there is a multiplication map $\mu: BH\times BH\rightarrow BH$.

\begin{proposition} \label{prop:conjugationmap}
The conjugation map $\alpha: G/H\times BH\rightarrow BG$ is homotopic to the composition of maps
\[
G/H\times BH\xrightarrow{l\times id} BH\times BH\stackrel{\mu}{\longrightarrow} BH\stackrel{j}{\longrightarrow} BG\, .
\]
Moreover, there is a homotopy fibre sequence
\[
G/H\times G/H \stackrel{sh}{\longrightarrow} G/H\times BH\stackrel{\alpha}{\longrightarrow} BG\,,
\]
where the map $sh$ is the `shear' map given by
\[
sh(gH,g'H)=(gH, \mu(l(gH)^{-1},l(g'H)))\,.
\]
\end{proposition}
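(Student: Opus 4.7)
My plan is as follows. For Part 1, the key observation is that since $H$ is abelian, conjugation $c_h: H \to H$ by any $h \in H$ is the identity, so the formula $(gH; h_1, \ldots, h_k) \mapsto (gh_1 g^{-1}, \ldots, gh_k g^{-1})$ descends from $G \times BH$ to $G/H \times BH$ and defines $\alpha$. On the other hand, $l: G/H \to BH$ is only a map in the homotopy category (obtained by inverting the equivalence $EH \times_H G \simeq G/H$ and then projecting to $BH$), so to compare $\alpha$ with $j \circ \mu \circ (l \times \mathrm{id})$ I would first replace $G/H$ in the source by the simplicial model $EH \times_H G$, which carries a levelwise projection to $BH$. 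On $k$-simplices, writing a point as $(h_1,\ldots,h_k \,|\, g;\, h_1', \ldots, h_k')$, the two composite simplicial maps into $BG_k = G^k$ become $(g^{-1}h_1' g,\ldots, g^{-1}h_k' g)$ and $(h_1 h_1',\ldots, h_k h_k')$ respectively. I would then exhibit an explicit simplicial homotopy exploiting the auxiliary $(3k)$-simplex $(g, h_1', g^{-1}, g, h_2', g^{-1}, \ldots, g, h_k', g^{-1})$ of $BG$: one system of face collapses along the blocks $(g, h_i', g^{-1})$ yields the conjugated tuple, while another collapsing pattern (permitted because $H$ is abelian and hence the $h_i'$ commute with one another) yields the multiplied tuple, and the bar-construction combinatorics then provides a prism interpolating between the two.

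For Part 2, once Part 1 is established the identification of the fibre sequence is formal. The standard fibration $G/H \xrightarrow{l} BH \xrightarrow{j} BG$ presents $G/H$ as the homotopy fibre of $j$ with fibre inclusion $l$. Therefore the homotopy fibre of $j \circ \mu \circ (l \times \mathrm{id}) \simeq \alpha$ is the homotopy pullback of
$$G/H \xrightarrow{l} BH \xleftarrow{\mu \circ (l \times \mathrm{id})} G/H \times BH.$$
A point in this pullback is a triple $(g'H, gH, c)$ satisfying $l(g'H) = \mu(l(gH), c)$, and since $BH$ is a topological abelian group this equation solves uniquely for $c = \mu(l(gH)^{-1}, l(g'H))$. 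Hence the pullback is parametrized by $G/H \times G/H$ and its structural map into $G/H \times BH$ is precisely the shear map $sh$ in the statement.

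The main obstacle is clearly Part 1: producing a natural simplicial homotopy between two maps that differ by coherent inner automorphisms of $G$ acting on the subgroup $BH \subset BG$, parametrized by the cosets in $G/H$. A clean organizational tool is the shear isomorphism $BG \sslash G \cong BG \times BG$ used in the proof of Lemma \ref{lem:homotopyringf}; restricting this shearing to $H$-conjugation orbits (and using that $H$ acts trivially on itself) should yield, on passage to realizations, exactly the factorization of $\alpha$ through $\mu \circ (l \times \mathrm{id})$. Once this factorization is in hand, Part 2 follows by the straightforward homotopy pullback computation outlined above.
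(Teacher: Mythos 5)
Your Part 2 is essentially correct and takes a cleaner, more formal route than the paper: you identify the homotopy fibre of $j\circ \mu\circ (l\times id)$ as the homotopy pullback of $G/H\xrightarrow{l} BH\xleftarrow{\mu\circ(l\times id)} G/H\times BH$ and solve for the $BH$-coordinate using that $BH$ is a group, which does yield the shear map. (The paper instead writes out the total space of $\alpha^{\ast}EG$ as an explicit one-sided bar construction $E(H\times H)\times_{\rho}(G\times G)$ and identifies it with $(EH\times_H G)\times(EH\times_H G)$ by hand.) But your Part 2 presupposes Part 1, and Part 1 is where the genuine gap lies.

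The proposed simplicial homotopy in Part 1 does not work as described. After replacing $G/H$ by $EH\times_H G$, a $k$-simplex of the source is $(h_1,\dots,h_k\,|\,g;\,h_1',\dots,h_k')$, and the two maps send it to $(g^{-1}h_1'g,\dots,g^{-1}h_k'g)$ and $(h_1h_1',\dots,h_kh_k')$ respectively. Your auxiliary simplex $(g,h_1',g^{-1},\dots,g,h_k',g^{-1})$ contains no $h_i$ at all, so no pattern of face collapses applied to it can produce the tuple $(h_1h_1',\dots,h_kh_k')$; at best it interpolates between $(g^{-1}h_i'g)_i$ and $(h_i')_i$, which is not the target map. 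The whole content of the statement is the interaction between $g$ and the $h_i$, which is encoded only in the simplicial structure of $EH\times_H G$ (e.g.\ $d_k(h_1,\dots,h_k\,|\,g)=(h_1,\dots,h_{k-1}\,|\,h_kg)$), and your homotopy never engages with it — it treats $g$ as a fixed parameter, but $g$ is only well defined up to the $H$-translations glued in by the $h_i$. The closing suggestion to ``restrict the shear isomorphism $BG\sslash G\cong BG\times BG$'' is also not directly applicable: the relevant conjugation is by elements of $G$ acting on the subgroup $H$, not $H$ on $H$ or $G$ on $G$, so there is no semidirect product of the form used in Lemma \ref{lem:homotopyringf} to shear. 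The paper avoids constructing any homotopy explicitly: it pulls back the universal bundle $EG\rightarrow BG$ along both maps, presents both pullbacks as one-sided bar constructions $E(H\times H)\times(G\times G)$ with two different $H\times H$-actions $\rho$ and $\rho'$, and exhibits a bundle isomorphism via the shear $(g,g')\mapsto(g,g^{-1}g')$ on total spaces; homotopy of the classifying maps then follows from the classification of principal bundles. You would need either that argument or a substantially more careful homotopy that uses the face maps of $EH\times_H G$.
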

\begin{proof}
To prove the first part of the proposition it suffices to show that the two principal $G$-bundles over $G/H\times BH$ classified by $j\circ \mu\circ (l\times id)$ respectively $\alpha$ are isomorphic. Thus, we must compare the pullbacks of the universal bundle $\pi: EG\rightarrow BG$ under both maps. It is known that a levelwise pullback of simplicial spaces remains a pullback after geometric realisation. Then it is easily seen that the following diagram is a pullback square by verifying the pullback simplicialwise,
\begin{equation} \label{dgr:homotopyfibreconjugationmap} \xymatrixcolsep{5pc}
\xymatrix{
E(H\times H)\times_{\rho} (G\times G) \ar[r]^-{p} \ar[d]^-{q} & EG \ar[d]^-{\pi}	\\
(EH\times_{H}G)\times BH \ar[r]^-{j\circ \mu\circ (l\times id)}	& BG\,.
}
\end{equation}
In the top left corner, the one-sided bar construction is formed by regarding $G\times G$ as a left $H\times H$-space via the action
\begin{alignat*}{2}
\rho: H\times H\times G\times G	&\quad&	\longrightarrow	&\quad	G\times G			\\
(h,h',g,g')						&\quad&	\longmapsto	&\quad	(hg,hh'g')\, .
\end{alignat*}
Right-multiplication on the second factor of $G$ makes the bar construction the total space of a principal $G$-bundle. The bundle projection $q$ is given by two component maps: The component into $EH\times_H G$ is obtained by projecting onto the first factors. The component map into $BH$ is the projection onto $B(H\times H)$ followed by projection onto the second factor. The bundle map $p$ is induced by mutliplication in $H$ and projection onto the second factor of $G$.

On the other hand, pulling back the universal $G$-bundle along the map $\alpha$ produces the following diagram
\[
\xymatrix{
E(H\times H)\times_{\rho'} (G\times G) \ar[r]^-{p'} \ar[d]^-{q} & EG \ar[d]^-{\pi}	\\
(EH\times_{H}G)\times BH \ar[r]^-{\alpha}	& BG\, .
}
\]
The action $\rho'$ is given by
\begin{alignat*}{2}
\rho': H\times H\times G\times G	&\quad&	\longrightarrow	&\quad	G\times G			\\
(h,h',g,g')						&\quad&	\longmapsto	&\quad	(hg,g^{-1}h'gg')\, ,
\end{alignat*}
and the bundle map $p'$ is most easily described on $k$-simplices, where it is given by
\[
((h_1,h_1'),\dots,(h_k,h_k')\,|\,(g,g'))\stackrel{p'}{\longmapsto} (g^{-1}h_1'g,\dots,g^{-1}h_k'g\,|\,g')\, .
\]
The bundle projection $q$ is the same as in (\ref{dgr:homotopyfibreconjugationmap}).

It remains to compare the resulting principal $G$-bundles over the common base space $(EH\times_{H}G)\times BH$. One can check that the shear map
\begin{alignat*}{2}
G\times G	&\quad&	\longrightarrow	&\quad	G\times G			\\
(g,g')		&\quad&	\longmapsto	&\quad	(g,g^{-1}g')
\end{alignat*}
is $H\times H$-equivariant for the actions $\rho$ and $\rho'$ and induces an isomorphism of the two principal bundles. This proves that $\alpha \simeq j\circ \mu\circ (l\times id)$.

To prove the second part of the proposition we may take the homotopy fibre of $\alpha$ to be the space in the top left corner of (\ref{dgr:homotopyfibreconjugationmap}). We have the following commutative diagram,
\[
\xymatrix{
(EH\times_H G)\times (EH\times_H G) \ar[r]^-{\cong}	_-{\sigma} \ar@/_1pc/[dr]^-{sh}	& \;E(H\times H)\times_{\rho} (G\times G) \ar[d]^-{q}	\\
													& (EH\times_{H}G)\times BH\,,
}
\]
where the horizontal isomorphism is given by
\[
((h_1,\dots,h_k\,|\, g),(h_1',\dots,h_k'\,|\, g'))\stackrel{\sigma}{\longmapsto} ((h_1,h_1^{-1}h_1'),\dots,(h_k,h_k^{-1}h_k')\,|\, (g,g'))
\]
on $k$-simplices. The composite map $sh:=q\circ \sigma$ is then given by
\[
((h_1,\dots,h_k\,|\, g),(h_1',\dots,h_k'\,|\, g'))\stackrel{sh}{\longmapsto} ((h_1,\dots,h_k\,|\, g),(h_1^{-1}h_1',\dots,h_k^{-1}h_k'))\,.
\]
Under the equivalence $EH\times_H G\simeq G/H$ it becomes the shear map $sh$ as described in the proposition.
\end{proof}

Let $H^\ast$ denote cohomology with $\mathbb{Q}$-coefficients.\bigskip

Consider the case where $H=T$ is a maximal torus in $G$ and $W=N(T)/T$ is its Weyl group. The multiplication $\mu: BT\times BT\rightarrow BT$  makes the algebra $H^\ast(BT)$ a Hopf algebra with comultiplication $\mu^\ast$.

\begin{corollary} \label{cor:inclusionmaponcohomology}
Under the well known isomorphism $H^\ast(BG)\cong H^\ast(BT)^W$ and the isomorphism (\ref{eq:bcomgcohomology2}) the map $i^\ast: H^\ast(BG)\rightarrow H^\ast(B_{\com}G_{\mathds{1}})$ is given by
\[
H^\ast(BT)^W\stackrel{\mu^\ast}{\longrightarrow} (H^\ast(BT)\otimes H^\ast(BT))^W\xrightarrow{\textnormal{proj.}} (H^\ast(BT)\otimes H^\ast(BT))^W/J\, .
\] 
\end{corollary}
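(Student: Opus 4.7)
The strategy is to apply Proposition \ref{prop:conjugationmap} with $H = T$ a maximal torus, and then pass to $W$-orbits to identify the resulting map with $i^\ast$ under the isomorphisms of interest.

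First I would observe that the conjugation map $\bar{\varphi}: G/T \times BT \rightarrow B_{\com}G_{\mathds{1}}$ from Section \ref{sec:cohomologybcomg} satisfies $i \circ \bar{\varphi} = \alpha$, where $\alpha$ is the map of (\ref{eq:conjugationmap}). This is clear from the simplicial formula, since both send $(gT, t_1, \dots, t_k)$ to $(gt_1g^{-1}, \dots, gt_k g^{-1}) \in N_k G$. Moreover, $\bar{\varphi}$ is $W$-equivariant for the trivial action on $BG$ and descends to $\varphi: G/T \times_W BT \rightarrow B_{\com}G_{\mathds{1}}$, which is exactly the map that implements the isomorphism (\ref{eq:bcomgcohomology}). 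Thus on rational cohomology, pulling back along $i$ amounts to pulling back along $\alpha$ and then restricting to the $W$-invariant subring.

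Next I would invoke Proposition \ref{prop:conjugationmap} to factor $\alpha \simeq j \circ \mu \circ (l \times \mathrm{id})$, so on cohomology
\[
\alpha^\ast = (l \times \mathrm{id})^\ast \circ \mu^\ast \circ j^\ast : H^\ast(BG) \longrightarrow H^\ast(G/T) \otimes H^\ast(BT).
\]
The map $j^\ast: H^\ast(BG) \rightarrow H^\ast(BT)$ is precisely the inclusion $H^\ast(BT)^W \hookrightarrow H^\ast(BT)$. Since $\mu: BT \times BT \rightarrow BT$ is $W$-equivariant for the diagonal action on the source, $\mu^\ast$ carries $H^\ast(BT)^W$ into $(H^\ast(BT) \otimes H^\ast(BT))^W$. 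Finally, the Borel presentation identifies $l^\ast: H^\ast(BT) \twoheadrightarrow H^\ast(G/T)$ with the quotient by the ideal generated by $H^{>0}(BT)^W$; after tensoring with $H^\ast(BT)$ and restricting to $W$-invariants, this is exactly the projection $(H^\ast(BT) \otimes H^\ast(BT))^W \twoheadrightarrow (H^\ast(BT) \otimes H^\ast(BT))^W / J$ appearing in (\ref{eq:bcomgcohomology2}). Combining these three identifications yields the displayed composition in the corollary.

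\paragraph{Main obstacle.} The only non-mechanical point is to verify that the isomorphism (\ref{eq:bcomgcohomology2}) used to describe the target of $i^\ast$ really is obtained from (\ref{eq:bcomgcohomology}) by applying the Borel presentation to the $H^\ast(G/T)$-factor, so that $(l \times \mathrm{id})^\ast$ on $W$-invariants is literally the quotient map by $J$. This is essentially bookkeeping, but it requires carefully tracking how the ideal $J$ in (\ref{eq:bcomgcohomology2}) corresponds to $(\ker l^\ast) \otimes H^\ast(BT)$ intersected with the $W$-invariants, which is a standard consequence of the fact that $H^\ast(BT)$ is free over $H^\ast(BT)^W$ (so that taking $W$-invariants commutes with the quotient). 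Once this identification is in place, the rest of the proof is a straightforward diagram chase combining Proposition \ref{prop:conjugationmap}, the $W$-equivariance of $\mu$, and the Borel presentation.
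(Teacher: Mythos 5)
Your argument is correct and is exactly the route the paper intends: the paper's proof of Corollary \ref{cor:inclusionmaponcohomology} is the single line ``immediate from Proposition \ref{prop:conjugationmap}'', and your write-up simply makes explicit the three identifications ($i\circ\bar{\varphi}=\alpha$, the factorisation $\alpha\simeq j\circ\mu\circ(l\times \mathrm{id})$, and the Borel presentation of $l^\ast$) that the paper leaves implicit. The ``bookkeeping'' point you flag about matching $J$ with $(\ker l^\ast)\otimes H^\ast(BT)$ on $W$-invariants is handled in the paper by citing \cite[Prop.~7.1]{ademcommutativity} for the isomorphism (\ref{eq:bcomgcohomology2}), so nothing further is needed.
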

\begin{proof}
This is immediate from Proposition \ref{prop:conjugationmap}.
\end{proof}

\begin{example} \label{ex:cohomologybcomun}
Consider the case $G=U(k)$. We have recalled the rational cohomology of $B_{\com}U(k)$ in Section \ref{sec:cohomologybcomg}. The cohomology ring is generated by the multisymmetric functions (\ref{eq:generatorzabn}), that is, by the
\[
z_{a,b,k}=x_1^ay_1^b+\dots +x_k^ay_k^b
\]
for $b\geq 1$. Similarly, $H^\ast(BU(k))\cong H^\ast(BT(k))^W=\mathbb{Q}[t_1,\dots,t_k]^{\Sigma_k}$ is generated by the polynomials $z_n:=t_1^n+\dots+t_k^n$. Since each $t_i$ is primitive, we have
\[
\mu^\ast(z_n)=\sum_{i=1}^k(t_i\otimes 1+1\otimes t_i)^n=\sum_{j=0}^{n}\binom{n}{j}\sum_{i=1}^k t_i^{j}\otimes t_i^{n-j}\, .
\]
Identifying $t_i^j\otimes t_i^{n-j}$ with $x_i^jy_i^{n-j}$ in $H^\ast(B_{\com}U(k))$ and applying Corollary \ref{cor:inclusionmaponcohomology} shows that
\begin{equation} \label{eq:ibcomun}
i^\ast(z_n)=\sum_{j=0}^{n-1} \binom{n}{j} z_{j,n-j,k}\, .
\end{equation}
In the limit $k\to \infty$ we arrive at the formula derived in Example \ref{ex:inclusionbcomucoh}.
\end{example}

\bibliographystyle{amsplain} 

\providecommand{\bysame}{\leavevmode\hbox to3em{\hrulefill}\thinspace}
\providecommand{\MR}{\relax\ifhmode\unskip\space\fi MR }
\providecommand{\MRhref}[2]{%
  \href{http://www.ams.org/mathscinet-getitem?mr=#1}{#2}
}
\providecommand{\href}[2]{#2}

\vskip .3in
\noindent
Mathematical Institute
\newline
University of Oxford
\newline
Andrew Wiles Building
\newline
Oxford OX2 6GG
\newline
UK \bigskip

\noindent
{\it gritschacher@maths.ox.ac.uk}

\end{document}